\newtheorem{thm}{Theorem}[section]
\newtheorem{lemma}[thm]{Lemma}
\newtheorem{theorem}{Theorem}
\theoremstyle{definition}
\newtheorem{definition}[thm]{Definition}
\newtheorem{example}[thm]{Example}
\newtheorem{prop}[thm]{Proposition}
\newtheorem{cor}[thm]{Corollary}
\theoremstyle{remark}
\newtheorem{remark}[thm]{Remark}
\title{D\'evissage for Algebraic K-theory of Small Stable $\infty$-categories}
\author[$\dagger$]{Chunhui Wei}
\affil[$\dagger$]{The University of Melbourne\newline
chunhuiw2@student.unimelb.edu.au\newline chunhuiwei@mail.ustc.edu.cn}
\keywords{Category Theory\quad K-Theory}
\begin{abstract}
In this article, we extend the theorem of heart\cite{Barwick_2015}, which implies Quillen's d\'evissage theorem by \cite{Efimov2025}, to generic small stable $\infty$-categories. To be precise, we establish a necessary and sufficient condition under when an exact functor between stable $\infty$-categories induces isomorphisms of non-negative $K$-groups when this exact functor satisfies the d\'evissage condition.
\end{abstract}
\begin{document}

\flushbottom
\maketitle
\thispagestyle{empty}
\tableofcontents
\section*{Introduction}
Algebraic $K$-theory provides fundamental tools to investigate the structure of stable $\infty$-categories, revealing deep connections between categorical decompositions and invariants arising from homotopy-theoretic contexts. In this article, we establish new results characterizing algebraic $K$-groups under conditions generalizing classical d\'evissage arguments.

Our main contributions focus on exact functors between (idempotent-complete) stable $\infty$-categories that satisfy suitable d'evissage conditions. Building upon this framework, we prove that an exact functor induces isomorphisms on all higher $K$-groups $K_n$ for $n \geq 1$ whenever it satisfies a weak d\'evissage condition and is fillable. Furthermore, if the stronger d\'evissage condition is met, the isomorphism extends to $K_0$. In addition, we establish vanishing results for higher $K$-groups of fillable stable $\infty$-categories themselves, uncovering new structural constraints.
\begin{definition}
    Let $\mathcal{F}:\mathcal{A}\to\mathcal{C}$ be an exact functor between small stable idempotent-complete $\infty$-categories.
    \begin{enumerate}[label=(\arabic*)]
        \item $\mathcal{F}$ satisfies \emph{d\'evissage condition} if $\mathcal{F}(\mathcal{A})$ generates $\mathcal{C}$ as a stable $\infty$-category.
        \item $\mathcal{F}$ satisfies \emph{weak d\'evissage condition} if $\mathcal{F}(\mathcal{A})$ generates $\mathcal{C}$ as a stable idempotent-complete $\infty$-category.
        \item $\mathcal{F}$ is \emph{$n$-fillable} if $(\mathrm{Gap}^\lor_{(-)})^k\mathcal{F}$ satisfies the d\'evissage condition for all $1\leq k\leq n$, where
        \[
        \mathrm{Gap}^\lor_{(-)}:\mathrm{Fun}(\Delta^1,\mathrm{Cat}^\mathrm{perf})\to\mathrm{Fun}(\Delta^1,\mathrm{Cat}^\mathrm{perf})
        \]
        is a functor sending exact functors to exact functors.
    \end{enumerate}
\end{definition}
\begin{theorem}[Theorem \ref{mainresult1}]\label{thmA}
     Let $\mathcal{F}:\mathcal{A}\to\mathcal{C}$ be an exact functor between small stable idempotent-complete $\infty$-categories. If $\mathcal{F}$ satisfies the weak d\'evissage condition(resp. d\'evissage condition), then the following statements are equivalent:
    \begin{enumerate}[label=(\arabic*)]
        \item $\mathcal{F}$ is $n$-fillable, and 
        \item $\mathcal{F}$ induces isomorphisms $K_i(\mathcal{A})\xrightarrow{\simeq}K_i(\mathcal{C})$ for all $1\leq i\leq n-1$, an epimorphism $K_n(\mathcal{A})\twoheadrightarrow K_n(\mathcal{C})$ and a monomorphism (resp. an isomorphism) $K_0(\mathcal{A})\hookrightarrow K_0(\mathcal{C})$.
    \end{enumerate}
\end{theorem}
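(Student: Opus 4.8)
I would prove this by a connectivity estimate on the map of $K$-theory spaces (equivalently spectra) induced by $\mathcal{F}$, organised around Waldhausen's $S_\bullet$-construction and the additivity theorem for stable $\infty$-categories. Write $K(\mathcal{A})=\Omega|wS_\bullet\mathcal{A}|$ and likewise for $\mathcal{C}$, with the higher $K$-groups accessed through the iterated construction. The aim is to show that $|wS_\bullet\mathcal{F}|$ is connected through a range of homotopy degrees governed by the fillability length $n$, and injective one degree below that range; after the usual degree shift this is precisely the assertion that $K_i(\mathcal{A})\to K_i(\mathcal{C})$ is an isomorphism for $1\le i\le n-1$, a surjection for $i=n$, and a monomorphism for $i=0$, the last being promoted to an isomorphism exactly when the stronger d\'evissage hypothesis provides the extra control at the bottom degree.

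\textbf{Key steps.} First I would unwind $n$-fillability of $\mathcal{F}$ into a concrete lifting property: for each small $i$, an object of $S_i\mathcal{C}$ — a length-$i$ filtered object of $\mathcal{C}$ — admits a resolution by a filtered object of the same shape coming from $\mathcal{A}$, and these resolutions can be chosen compatibly as $i$ varies, that is, as a partial section of $wS_\bullet\mathcal{F}$ over low-dimensional skeleta together with the requisite higher coherences, the dimension cutoff being dictated by $n$. Before this can be used to contract a filtered object, however, one must first split off its associated graded pieces inside $K$-theory; this is exactly the additivity theorem, which identifies the map induced by an $i$-step filtration with the sum of the maps induced by its graded pieces, thereby reducing the whole question to the behaviour of $\mathcal{F}$ on objects rather than on flags. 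Given the partial section, a standard obstruction-theoretic argument on the (multi-)simplicial spaces then shows that any map of a sufficiently low-dimensional CW complex into $|wS_\bullet\mathcal{C}|$ lifts through $\mathcal{A}$ and that low-dimensional lifts are unique up to homotopy, which is the required connectivity of $|wS_\bullet\mathcal{F}|$ and, after looping, the stated behaviour on $K_i$ for $1\le i\le n-1$ together with the surjection on $K_n$. Finally, for $K_0$ I would argue directly from its presentation by objects and cofibre-sequence relations: the weak d\'evissage condition assigns to each object of $\mathcal{C}$ a well-defined class in $K_0(\mathcal{A})$ modulo the relations already present, furnishing a retraction of $K_0(\mathcal{F})$ and hence injectivity, while the d\'evissage condition additionally forces the cofibre-sequence relations themselves to lift, promoting this to surjectivity and therefore to an isomorphism; idempotent-completeness is used throughout to pass between retracts and direct summands. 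Steps two through four can alternatively be packaged through the long exact sequence of the cofibre of $\mathcal{F}$ in $\mathrm{Cat}_\infty^{\mathrm{perf}}$ and a vanishing statement for its $K$-groups in a range ending at $n$ (respectively including $0$), which is essentially the companion vanishing assertion for fillable categories promised in the introduction.

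\textbf{The main obstacle.} The crux is the first step, the coherent lifting: classically one chooses a finite filtration of each object and runs a Noetherian induction, whereas in the $\infty$-categorical setting the resolutions must be produced functorially over all of $S_\bullet\mathcal{C}$ and equipped with every higher homotopy, and it is precisely this data that the notion of $n$-fillability is engineered to supply. The real work is therefore to check that fillability is inherited by the auxiliary constructions — passage to $S_\bullet$ and its iterates, to subquotients, and to retracts — so that the obstruction-theoretic lifting can be carried out, and to pin down why an $n$-step resolution trivialises the relevant mapping spaces through a definite range but, in general, no further, since this sharp cutoff is exactly what leaves $K_n$ merely surjective. A subsidiary but genuine difficulty is the $K_0$ bookkeeping: cleanly separating the information carried by the weak d\'evissage condition (enough to build a retraction) from that carried by the full condition (enough also to hit every class), and threading both consistently through the argument, is where the monomorphism-versus-isomorphism dichotomy in the statement originates.
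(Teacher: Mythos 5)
Your overall architecture (vanishing of the $K$-groups of a ``fiber term'' in a range determined by $n$, fed into a long exact sequence) matches the paper, but the two load-bearing steps you propose are not the ones that actually work, and as written each has a genuine gap. First, the connectivity/obstruction-theory step: you assert that $n$-fillability supplies a partial section of $wS_\bullet\mathcal{F}$ over low-dimensional skeleta ``together with the requisite higher coherences,'' and that a ``standard obstruction-theoretic argument'' then gives the connectivity of $|wS_\bullet\mathcal{F}|$. But $n$-fillability is not a lifting or section condition on $S_\bullet\mathcal{C}$; it is the statement that each iterate $\mathcal{F}_{k}=\mathrm{Gap}^\lor_{\mathcal{F}_{k-1}}$ satisfies a d\'evissage (essential-surjectivity) condition, which is an existence statement with no functoriality or coherence built in. The paper never converts this into skeletal sections; instead it builds a category $\mathrm{Q}(\mathcal{F})$ with $\mathcal{U}_{\mathrm{loc}}(\mathrm{Q}(\mathcal{F}))\simeq\mathrm{fib}(\mathcal{U}_{\mathrm{loc}}(\mathcal{F}))$ (Theorem \ref{thm:quotient}), proves the base case $K_0(\mathrm{Q}(\mathcal{F}))=0$ from $1$-fillability via the split sequence of Proposition \ref{prop:exactsequenceofcone} (Theorem \ref{thm:1fillableK0}), and then uses the delooping $\mathcal{U}_{\mathrm{loc}}(\mathrm{Q}(\mathcal{F}_{k+1}))\simeq\Omega\,\mathcal{U}_{\mathrm{loc}}(\mathrm{Q}(\mathcal{F}_k))$ (Lemma \ref{lem:inductionofquotient}, an additivity computation) to convert the $n$ separate $K_0$-vanishings into $K_i(\mathrm{Q}(\mathcal{F}))=0$ for $0\le i\le n-1$. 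The sharp cutoff at $K_n$ is then automatic from the long exact sequence, not from a count of cells. Your proposal leaves precisely this translation --- from the categorical fillability condition to a quantitative homotopical statement --- unperformed, and it is the entire content of the theorem.

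Second, your $K_0$ argument is backwards. You claim the weak d\'evissage condition ``assigns to each object of $\mathcal{C}$ a well-defined class in $K_0(\mathcal{A})$,'' yielding a retraction and hence injectivity of $K_0(\mathcal{F})$. Well-definedness of that class --- independence of the chosen filtration --- is exactly the hard point, and weak d\'evissage alone does not give it; this is what $1$-fillability is for. In the paper, injectivity of $K_0(\mathcal{F})$ comes from $K_0(\mathrm{Q}(\mathcal{F}))=0$ (which needs $n\ge 1$) plus the fiber sequence, with no retraction constructed; surjectivity in the strong case is the easy direction (Lemma \ref{lem:devissageK_0}). You also have the dichotomy inverted: under weak d\'evissage one gets only a monomorphism on $K_0$ because surjectivity may fail (the image need only generate $K_0(\mathcal{C})$ after closing under direct summands), not because the retraction argument degrades.
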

\begin{definition}
    A small idempotent-complete $\infty$-category $\mathcal{C}$ is \emph{$n$-fillable} if the diagonal map $\delta_\mathcal{C}:\mathcal{C}\to\mathcal{C}^2,X\mapsto (X,X)$ is $n$-fillable.
\end{definition}
\begin{theorem}[Theorem \ref{mainresult2}]\label{thmB}
     Let $\mathcal{C}\in\mathrm{Cat}^{\mathrm{perf}}$ be a small idempotent-complete $\infty$-category. Then $K_i(\mathcal{C})=0$ for all $1\leq i\leq n$ if and only if $\mathcal{C}$ is $n$-fillable.
\end{theorem}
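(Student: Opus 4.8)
The plan is to obtain this statement as a special case of Theorem~\ref{thmA}, applied to the essentially unique exact functor $\iota\colon 0\to\mathcal{C}$ out of the zero object $0\in\mathrm{Cat}_\infty^{\mathrm{perf}}$, which is small and idempotent-complete. The crux is the observation that, for $\iota$, the hypotheses of Theorem~\ref{thmA} collapse to exactly the hypothesis of the present statement: since $n$-fillability of a functor only records how objects of the target are assembled from objects in the image together with the chosen filling data, and the image of $\iota$ is the zero object, the functor $\iota$ is $n$-fillable precisely when $\mathcal{C}$ is $n$-fillable as a stable $\infty$-category; and the weak d\'evissage condition for $\iota$ imposes nothing further, because the relevant Verdier quotient is $\mathcal{C}/0\simeq\mathcal{C}$ and there is nothing left to compare. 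If the definitions of $n$-fillable functor, $n$-fillable category, and weak d\'evissage condition in the body of the paper are arranged so that this identification is not literally a tautology, this is precisely the place to insert a one-line compatibility lemma; I expect this definitional bookkeeping to be the only genuine obstacle.

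Granting the identification, Theorem~\ref{thmA} applied to $\iota$ produces isomorphisms $K_i(0)\xrightarrow{\simeq}K_i(\mathcal{C})$ for all $1\le i\le n-1$ together with an epimorphism $K_n(0)\twoheadrightarrow K_n(\mathcal{C})$. Since the $K$-theory spectrum of the zero category is contractible, $K_i(0)=0$ for every $i$; hence $K_i(\mathcal{C})=0$ for $1\le i\le n-1$, and surjectivity of $0=K_n(0)\to K_n(\mathcal{C})$ forces $K_n(\mathcal{C})=0$. Together these give $K_i(\mathcal{C})=0$ for all $1\le i\le n$, which is the assertion. Note that no claim is made about $K_0(\mathcal{C})$, consistent with the fact that Theorem~\ref{thmA} provides for $\iota$ only a monomorphism $K_0(0)\hookrightarrow K_0(\mathcal{C})$ in the absence of the strong d\'evissage condition.

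As a sanity check and an alternative route, one could instead re-run the proof of Theorem~\ref{thmA} with $\mathcal{A}=0$ directly: the filling/d\'evissage machinery underlying that theorem, applied to the zero subcategory of $\mathcal{C}$, should exhibit a contraction of the part of $K(\mathcal{C})$ controlling $\pi_i$ for $1\le i\le n$, bypassing any appeal to Theorem~\ref{thmA} as a black box. Either way, the mathematical content beyond Theorem~\ref{thmA} is essentially just the vanishing $K_\ast(0)=0$, and the work is concentrated in checking that the zero functor meets the fillability and weak d\'evissage hypotheses.
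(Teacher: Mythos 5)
There is a genuine gap, and it sits exactly where you flagged the ``definitional bookkeeping'': the functor $\iota\colon 0\to\mathcal{C}$ does \emph{not} satisfy the (weak) d\'evissage condition unless $\mathcal{C}\simeq 0$. The d\'evissage condition (Definition \ref{def:devissage}, Proposition \ref{prop:devissage}) requires that the image of the functor generate the \emph{target} under extensions (and retracts, for the weak version, Definition \ref{def:weakdevissage}): one needs $\mathrm{Idem}(\mathrm{E}(\mathcal{F}))=\mathcal{C}$, where $\mathrm{E}(\mathcal{F})$ is the extension closure of $\mathcal{F}(\mathcal{A})$ in $\mathcal{C}$. For $\iota$ the image is the zero object, so $\mathrm{E}(\iota)$ is the trivial subcategory and the condition fails for every nonzero $\mathcal{C}$. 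Your remark that ``the relevant Verdier quotient is $\mathcal{C}/0\simeq\mathcal{C}$ and there is nothing left to compare'' has the logic reversed: the condition asks that this quotient be trivial, not that it be unconstrained. Moreover, the paper's Definition of an $n$-fillable \emph{category} is that the diagonal $\delta_\mathcal{C}\colon\mathcal{C}\to\mathcal{C}^2$ is $n$-fillable, not that $0\to\mathcal{C}$ is; your proposed identification of the two hypotheses is not what the paper sets up, and the alternative route of re-running the proof of Theorem \ref{thmA} with $\mathcal{A}=0$ founders on the same point.

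The paper's actual argument applies Theorem \ref{mainresult1} to $\delta_\mathcal{C}$: this functor is $n$-fillable by the definition of $\mathcal{C}$ being $n$-fillable, and it satisfies the weak d\'evissage condition by Lemma \ref{lem:diagdevissage}, since any $(X,Y)\in\mathcal{C}^2$ is a retract of $(X\oplus Y,\,X\oplus Y)\in\delta_\mathcal{C}(\mathcal{C})$. Theorem \ref{mainresult1} then makes $K_i(\mathcal{C})\to K_i(\mathcal{C}^2)\cong K_i(\mathcal{C})\oplus K_i(\mathcal{C})$ surjective for $1\le i\le n$; but this map is the diagonal $x\mapsto(x,x)$, and a diagonal homomorphism can only surject onto the product when the group is trivial. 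That last step, the group-theoretic squeeze via the diagonal, is the idea your proposal is missing; the vanishing $K_*(0)=0$ plays no role.
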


Furthermore, we have a D\'evissage-type Theorem for general localizing invariants.

\begin{theorem}[Theorem \ref{mainresult3}]\label{thmC}
    Suppose $\mathfrak{S}$ is a fillable class of some exact functors and $L:\mathrm{Cat}^\mathrm{perf}\to\mathrm{Sp}$ is a localizing invariant such that $L_m$ is $\mathfrak{S}$-epimorphic for some $m\in\mathbb{Z}$. Then
      \begin{enumerate}[label=(\arabic*)]
          \item $L_m$ is $\mathfrak{S}_\mathrm{devissage}^w\cap\mathfrak{S}_\mathrm{1-fillability}\cap(\mathrm{Gap}^\lor_{(-)})^{-1}\mathfrak{S}$-monomorphic.
          \item $L_m$ is $\mathfrak{S}_\mathrm{devissage}\cap\mathfrak{S}_\mathrm{1-fillability}\cap\mathfrak{S}$-invariant.
          \item $L_{m+n}$ is $\mathfrak{S}_\mathrm{devissage}^w\cap\mathfrak{S}_\mathrm{n-fillability}\cap(\mathrm{Gap}^\lor_{(-)})^{-1}\mathfrak{S}$-epimorphic for every $n\geq 1$.
          \item $L_{m+k}$ is $\mathfrak{S}_\mathrm{devissage}^w\cap\mathfrak{S}_\mathrm{n-fillability}\cap(\mathrm{Gap}^\lor_{(-)})^{-1}\mathfrak{S}$-invariant for every $0<k<n$.
      \end{enumerate}
\end{theorem}
\begin{remark}
    In Section 4, we explain the details for the notation in this theorem. In short, it means in some cases, epimorphisms between lower homotopy groups imply isomorphisms between higher homotopy groups. The following "the theorem of heart" is just an example.
\end{remark}
Using it, we make some remarks on the theorem of heart.
\begin{definition}
    A small \emph{$t$-category} is a small stable $\infty$-category $\mathcal{C}$ with a bounded $t$-structure $(\mathcal{C}_{\ge 0}, \mathcal{C}_{\le 0})$. An $t$-exact functor $F:\mathcal{A}\to\mathcal{C}$ between small $t$-categories is called \emph{coconnective} if
        \begin{itemize}
            \item $F(\mathcal{A})$ generates $\mathcal{C}$ as a stable idempotent-complete $\infty$-categories(it satisfies the weak d\'evissage condition), and
            \item $F|_{\mathcal{A}^\heartsuit}:\mathcal{A}^\heartsuit\to\mathcal{C}^\heartsuit$ is fully faithful.
        \end{itemize}
\end{definition}
\begin{cor}(Corollary \ref{cor:heart})
    Let $L:\mathrm{Cat}^\mathrm{perf}\to\mathrm{Sp}$ be a localizing invariant and $m\in\mathbb{Z}$. Then the following statements are equivalent:
    \begin{enumerate}[label=(\arabic*)]
        \item for every coconnective functor $\mathcal{F}$ between small $t$-categories, $\pi_mL(\mathcal{F})$ is an epimorphism.
        \item for every coconnective functor $\mathcal{F}$ between small $t$-categories, $\pi_{m+n}L(\mathcal{F})$ is an isomorphism for every $n\geq 0$.
    \end{enumerate}
\end{cor}

Overview of this article:
\begin{itemize}
    \item In Section 1, we introduce basic definitions and properties of stable \(\infty\)-categories.
    \item  In Section 2, we formalize the (weak) d\'evissage condition for an exact functor. It concerns whether the objects in the image are enough to generate the whole $\infty$-category.
    \item In Section 3, we develop the categorification of fibers and loops for universal localizing invariant. It is used to describe the higher $K$-groups of our target by $K_0$-groups of some other $\infty$-categories.
    \item In Section 4, we discuss the fillability of exact functors and present the main results concerning algebraic $K$-theory and generic localizing invariants. This condition determines whether the class of cofiber sequences in the image are enough to describe the class of cofiber sequences in the whole $\infty$-category.
    \item In Section 5, we show how to use our results to reprove the theorem of heart.
\end{itemize} 

\section{Preliminaries on Stable $\infty$-category and exact functors}
This section is a recollection and some remarks of \cite[Section 4,5]{Hoyois_2017}, which is a refinement and generalization of \cite{blumberg_gepner_tabuada_2013}.
\subsection{Stable $\infty$-category}
Beyond ordinary categories, the landscape of category theory expands with the introduction of higher categories, including the concept of $\infty$-categories, which play an important role in $K$-theory. Bergner \cite{bergner_2006} wrote a survey of different models of $\infty$-categories. The most widely used model, due to Joyal \cite{Joyal_2002} and Lurie \cite{lurie_2009}, defines $\infty$-categories as \textit{weak Kan complexes} (or \textit{quasi-categories}).
\begin{definition}\cite[Definition 1.1.1.4]{lurie_2017}\quad\\
A \textit{triangle} in a pointed $\infty$-category $\mathcal{C}$ is a functor $\Delta^1\times \Delta^1\to\mathcal{C}$
\[
\begin{tikzcd}
X \arrow[r, "f"] \arrow[d] & Y \arrow[d, "g"] \\
0 \arrow[r] & Z
\end{tikzcd}
\]
A \textit{fiber sequence} in $\mathcal{C}$ is a pullback triangle, and a \textit{cofiber sequence} in $\mathcal{C}$ is a pushout triangle. We will abuse notation by writing $X\to Y\to Z$ for a triangle.
\end{definition}

\begin{definition}\cite[Definition 1.1.1.6]{lurie_2017}\quad\\
Given a morphism $g: X \to Y$ in a pointed $\infty$-category $\mathcal{C}$. A \textit{fiber} of $g$ is a fiber sequence
\[
\begin{tikzcd}
W \arrow[r] \arrow[d] & X \arrow[d, "g"] \\
0 \arrow[r] & Y
\end{tikzcd}
\]
Dually, a \textit{cofiber} of $g$ is a cofiber sequence
\[
\begin{tikzcd}
X \arrow[r, "g"] \arrow[d] & Y \arrow[d] \\
0 \arrow[r] & Z
\end{tikzcd}
\]
We also refer to $W$ and $Z$ simply as the fiber and cofiber of $g$, respectively, and write $W = \mathrm{fib}(g)$ and $Z = \mathrm{cofib}(g)$.
\end{definition}

\begin{definition}\cite[Definition 1.1.1.9, Proposition 1.1.3.4]{lurie_2017}\quad\\
A \textit{stable} $\infty$-category is a pointed $\infty$-category satisfying:
\begin{itemize}
    \item It is finite complete and finite cocomplete.
    \item A square is a pushout if and only if it is a pullback.
\end{itemize}
Or, equivalently:
\begin{itemize}
    \item Every morphism admits both a fiber and a cofiber.
    \item A triangle is a fiber sequence if and only if it is a cofiber sequence.
\end{itemize}
\end{definition}
\begin{definition}
Let $\mathcal{C}$ be an $\infty$-category.
    \begin{enumerate}[label=(\arabic*)]
        \item Two morphisms $f,g:X\to Y$ are \emph{(homotopy) equivalent}, denote by $f\simeq g$, if $[f]=[g]$ in $\pi_0\mathrm{Map}(X,Y)$.
        \item Two objects $X,Y$ are \emph{(homotopy) equivalent}, denote by $X\simeq Y$, if there are two morphisms $f:X\to Y,g:Y\to X$ such that $f\circ g\simeq \mathrm{id}_Y$ and $g\circ f\simeq\mathrm{id}_X$. $f,g$ are called \emph{(homotopy) equivalences}.
    \end{enumerate}
\end{definition}

We display some properties of stable $\infty$-categories firstly.
\begin{lemma}\label{lem:diagram1}
Suppose \( X' \to X \to X'' \), \( Y' \to Y \to Y'' \), and \( Z' \to Z \to Z'' \) are three cofiber sequences in a stable \( \infty \)-category \( \mathcal{C} \).
\begin{enumerate}[label=(\arabic*)]
    \item Given a functor \( f: \Delta^1 \times \Delta^1 \to \mathcal{C} \) represented by the diagram
    \[
    \begin{tikzcd}
        X' \ar[r] \ar[d] & X \ar[d] \\
        Y' \ar[r] & Y
    \end{tikzcd},
    \]
    there is a unique morphism \( X'' \to Y'' \) extending \( f \) to a functor \( \Delta^2 \times \Delta^1 \to \mathcal{C} \) of the form
    \[
    \begin{tikzcd}
        X' \ar[r] \ar[d] & X \ar[d] \ar[r] & X'' \ar[d] \\
        Y' \ar[r] & Y \ar[r] & Y''
    \end{tikzcd}.
    \]

    \item Given a functor \( f: \Delta^1 \times \Delta^1 \to \mathcal{C} \) represented by the diagram
    \[
    \begin{tikzcd}
        X \ar[r] \ar[d] & X'' \ar[d] \\
        Y \ar[r] & Y''
    \end{tikzcd},
    \]
    there is a unique morphism \( X' \to Y' \) extending \( f \) to a functor \( \Delta^2 \times \Delta^1 \to \mathcal{C} \) of the form
    \[
    \begin{tikzcd}
        X' \ar[r] \ar[d] & X \ar[d] \ar[r] & X'' \ar[d] \\
        Y' \ar[r] & Y \ar[r] & Y''
    \end{tikzcd}.
    \]

    \item Given a functor \( f: \Delta^1 \times \Delta^2 \to \mathcal{C} \) represented by the diagram
    \[
    \begin{tikzcd}
        X' \ar[r] \ar[d] & X \ar[d] \\
        Y' \ar[r] \ar[d] & Y \ar[d] \\
        Z' \ar[r] & Z
    \end{tikzcd}
    \]
    where the two columns are cofiber sequences, there are unique morphisms \( X'' \to Y'' \) and \( Y'' \to Z'' \) extending \( f \) to a functor \( \Delta^2 \times \Delta^2 \to \mathcal{C} \) of the form
    \[
    \begin{tikzcd}
        X' \ar[r] \ar[d] & X \ar[d] \ar[r] & X'' \ar[d] \\
        Y' \ar[r] \ar[d] & Y \ar[d] \ar[r] & Y'' \ar[d] \\
        Z' \ar[r] & Z \ar[r] & Z''
    \end{tikzcd}
    \]
    such that \( X'' \to Y'' \to Z'' \) is a cofiber sequence.

    \item Given a functor \( f: \Delta^1 \times \Delta^2 \to \mathcal{C} \) represented by the diagram
    \[
    \begin{tikzcd}
        X \ar[r] \ar[d] & X'' \ar[d] \\
        Y \ar[r] \ar[d] & Y'' \ar[d] \\
        Z \ar[r] & Z''
    \end{tikzcd}
    \]
    where the two columns are cofiber sequences, there are unique morphisms \( X' \to Y' \) and \( Y' \to Z' \) extending \( f \) to a functor \( \Delta^2 \times \Delta^2 \to \mathcal{C} \) of the form
    \[
    \begin{tikzcd}
        X' \ar[r] \ar[d] & X \ar[d] \ar[r] & X'' \ar[d] \\
        Y' \ar[r] \ar[d] & Y \ar[d] \ar[r] & Y'' \ar[d] \\
        Z' \ar[r] & Z \ar[r] & Z''
    \end{tikzcd}
    \]
    such that \( X' \to Y' \to Z' \) is a cofiber sequence.
\end{enumerate}
\end{lemma}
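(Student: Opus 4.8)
The plan is to reduce all four statements to a single structural fact together with its dual, rather than to chase the diagrams by hand. For a stable $\infty$-category $\mathcal{D}$, write $\mathcal{S}(\mathcal{D})\subseteq\mathrm{Fun}(\Delta^1\times\Delta^1,\mathcal{D})$ for the full subcategory of cofiber sequences, i.e.\ the pushout squares whose $(1,0)$-entry is a zero object. I will use that restriction to the edge $\Delta^{\{(0,0),(0,1)\}}$ --- remembering only the first morphism --- is a trivial Kan fibration $\mathcal{S}(\mathcal{D})\xrightarrow{\sim}\mathrm{Fun}(\Delta^1,\mathcal{D})$; this is essentially \cite[\S 1.1.1]{lurie_2017}, and follows from the contractibility of the space of zero objects (to pass from a morphism $X'\to X$ to a span $X\leftarrow X'\to 0$) together with \cite[Proposition 4.3.2.15]{lurie_2009} (to pass from the span to its pushout square). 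Two consequences will be invoked: first, $\mathcal{S}(\mathcal{D})$ is itself stable, and its cofibers are computed entrywise in $\mathcal{D}$, since an entrywise cofiber of pushout squares with zero $(1,0)$-corner is again such a square; second, $\mathcal{C}^{\mathrm{op}}$ is stable (the axioms are manifestly self-dual) and, because fiber and cofiber sequences agree in a stable $\infty$-category \cite[Proposition 1.1.3.4]{lurie_2017}, a cofiber sequence in $\mathcal{C}$ is the same datum as a cofiber sequence in $\mathcal{C}^{\mathrm{op}}$ with the three objects listed in reverse order. Throughout, ``unique'' is understood in the homotopy-coherent sense: the relevant space of extensions will be shown to be contractible, which in particular pins down the displayed morphisms up to homotopy.

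For (1), I will apply the structural fact to $\mathcal{D}=\mathrm{Fun}(\Delta^1,\mathcal{C})$, which is again stable since $\mathcal{C}$ is (\cite[\S 1.1.3]{lurie_2017}). Taking this $\Delta^1$ to be the vertical direction of the grid, the objects of $\mathcal{D}$ are the columns $X'\to Y'$, $X\to Y$, $X''\to Y''$; a cofiber sequence in $\mathcal{D}$ is exactly an entrywise pair of cofiber sequences in $\mathcal{C}$, i.e.\ a grid of the displayed shape with both rows cofiber sequences; and $f$ is precisely a morphism $g\colon(X'\to Y')\to(X\to Y)$ in $\mathcal{D}$. Hence the extensions of $f$ correspond to the (contractible) fiber over $g$ of the trivial fibration $\mathcal{S}(\mathcal{D})\to\mathrm{Fun}(\Delta^1,\mathcal{D})$; the cofiber term of the resulting sequence is the column $X''\to Y''$, which supplies the required morphism, and contractibility is the asserted uniqueness. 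Statement (2) is then (1) for $\mathcal{C}^{\mathrm{op}}$: re-reading the square $f$ in $\mathcal{C}^{\mathrm{op}}$ and noting that $X'\to X\to X''$ and $Y'\to Y\to Y''$ become cofiber sequences there, (1) produces the unique morphism $X'\to Y'$ together with the grid, which is the grid of (2) after reversing all arrows.

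For (3), I will instead apply the structural fact to $\mathcal{D}=\mathcal{S}(\mathcal{C})$, the stable $\infty$-category of cofiber sequences in $\mathcal{C}$. The hypothesis that the two columns of $f$ are cofiber sequences says precisely that $f$ refines --- uniquely up to contractible choice, again because zero objects form a contractible space --- to a morphism $g\colon(X'\to Y'\to Z')\to(X\to Y\to Z)$ in $\mathcal{D}$. Its cofiber $C$ in $\mathcal{D}$, computed entrywise, is the cofiber sequence $X''\to Y''\to Z''$ in $\mathcal{C}$: the three entries are the entrywise cofibers $\mathrm{cofib}(X'\to X)$, $\mathrm{cofib}(Y'\to Y)$, $\mathrm{cofib}(Z'\to Z)$, and the square stays a pushout with zero corner, which is automatic and needs no separate verification. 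Unwinding the cofiber sequence $A\to B\to C$ in $\mathcal{D}=\mathcal{S}(\mathcal{C})$ into a $\mathcal{C}$-valued diagram yields the $\Delta^2\times\Delta^2$ grid whose third column is the cofiber sequence $C$; the morphisms $X''\to Y''$ and $Y''\to Z''$ are read off from $C$, and the contractibility of the space of all of this data is the stated uniqueness. Finally, (4) is obtained from (3) exactly as (2) was obtained from (1), by passing to $\mathcal{C}^{\mathrm{op}}$ and using that fiber sequences there are cofiber sequences.

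The only genuinely delicate point, I expect, is the bookkeeping that translates ``a cofiber sequence valued in $\mathrm{Fun}(\Delta^1,\mathcal{C})$ (resp.\ in $\mathcal{S}(\mathcal{C})$)'' into the explicit $\Delta^2\times\Delta^1$ (resp.\ $\Delta^2\times\Delta^2$) grids displayed in the statement: one must keep track of which $\Delta^1$-factor plays the ``vertical'' role and check that restricting a pushout square with a zero $(1,0)$-corner along the inclusion $\Delta^2\hookrightarrow\Delta^1\times\Delta^1$ of the path $(0,0)\to(0,1)\to(1,1)$ recovers the displayed three-term rows and columns (with the zero objects omitted). Everything else reduces to the contractibility of spaces of zero objects and of relative Kan extensions, together with the fact that limits and colimits in functor $\infty$-categories are computed pointwise; in particular the raw morphisms appearing in the statement are determined only up to contractible spaces of homotopies, which is what is meant by ``unique'' there.
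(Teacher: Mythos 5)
Your proposal is correct, and it is both more detailed and partly different in route from the paper's own (one--line) proof. For parts (1) and (2) the paper simply says they follow ``from the universal property of cofibers''; your formulation via the trivial Kan fibration $\mathcal{S}(\mathcal{D})\to\mathrm{Fun}(\Delta^1,\mathcal{D})$ applied to $\mathcal{D}=\mathrm{Fun}(\Delta^1,\mathcal{C})$, with (2) obtained by passing to $\mathcal{C}^{\mathrm{op}}$, is the standard rigorous form of exactly that invocation, and it has the advantage of actually producing the uniqueness (contractibility) statement rather than leaving it implicit. For parts (3) and (4) the routes diverge: the paper builds the grid by applying (1)/(2) twice and then argues that the last column is a cofiber sequence by appealing to the triangulated structure of $\mathrm{Ho}(\mathcal{C})$, which only identifies $X''\to Y''\to Z''$ as a distinguished triangle in the homotopy category and does not by itself show that the relevant square in the constructed $\Delta^2\times\Delta^2$ diagram is a pushout in $\mathcal{C}$; your argument --- taking the cofiber of $g$ inside the stable subcategory $\mathcal{S}(\mathcal{C})\subseteq\mathrm{Fun}(\Delta^1\times\Delta^1,\mathcal{C})$, where it is computed entrywise because colimits commute with colimits --- produces the third column as a pushout square in $\mathcal{C}$ directly and is the more robust argument. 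One point you should make explicit when writing this up: the lemma fixes the cofiber sequences $X'\to X\to X''$, etc., in advance, so the extension space you need to show contractible is the fiber of the restriction over the square $f$ \emph{together with} the given row (resp.\ column) cofiber sequences, not just the fiber over $g$; this follows by one more application of the same trivial-fibration argument (restriction from full grids to ``$f$ plus the given cofiber sequences'' is again a trivial fibration onto its image), but it is not literally the fiber you compute, and it is the only place where your sketch elides a step.
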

\begin{proof}
    (1) and (2) are from the universal property of cofibers. Besides, (1) and (2) give the existence of functors $\Delta^2\times \Delta^2\to\mathcal{C}$ such that all rows are cofiber sequences. The last column is cofiber sequence is because $\mathrm{Ho}(\mathcal{C})$ is a triangulated category whose distinguished triangles are cofiber sequences \cite[Theorem 1.1.2.14]{lurie_2017}. 
\end{proof}

\begin{lemma}\label{lem:cofiberofpullback}\cite[Lemma 2.21]{2019InMat.216..241A}
Given a functor \( \Delta^1 \times \Delta^1 \to \mathcal{C} \) represented by the diagram
\[
\begin{tikzcd}
M \arrow[r, "f"] \arrow[d] & N \arrow[d] \\
P \arrow[r, "g"] & Q
\end{tikzcd},
\]
the induced map \( \operatorname{cofib}(f) \to \operatorname{cofib}(g) \) is an equivalence if and only if this square is a pushout square.
\end{lemma}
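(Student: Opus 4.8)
The plan is to reduce the biconditional to the single identity
\[
\operatorname{cofib}\bigl(\operatorname{cofib}(f)\xrightarrow{\,c\,}\operatorname{cofib}(g)\bigr)\;\simeq\;\operatorname{cofib}(\theta),
\]
where $c$ is the induced map --- unique by Lemma~\ref{lem:diagram1}(1) --- and $\theta\colon T\to Q$ is the canonical comparison map, $T$ being the pushout of the span $N\xleftarrow{f}M\to P$. Granting this identity, the lemma is immediate from two standard facts about stable $\infty$-categories: a morphism is an equivalence exactly when its cofiber is a zero object, and the square is a pushout exactly when $\theta$ is an equivalence. Chaining these, $c$ is an equivalence $\iff\operatorname{cofib}(c)\simeq 0\iff\operatorname{cofib}(\theta)\simeq 0\iff\theta$ is an equivalence $\iff$ the square is a pushout.

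I expect the crux to be the special case of the lemma for the pushout square exhibiting $T$, namely the assertion that the canonical map $\operatorname{cofib}(f)=\operatorname{cofib}(M\to N)\to\operatorname{cofib}(P\to T)$ is an equivalence; this is precisely the ``pushout $\Rightarrow$ parallel cofibers agree'' half of the statement, and it cannot simply be fed back through Lemma~\ref{lem:diagram1}. I would obtain it from Fubini for colimits: both $\operatorname{cofib}(M\to N)$ and $\operatorname{cofib}(P\to T)=0\sqcup_P T$ are the colimit of the single diagram $0\leftarrow P\leftarrow M\xrightarrow{f}N$, the two presentations differing only in how one groups the two iterated pushouts, so they are canonically equivalent. (Equivalently, one may note that $\operatorname{cofib}\colon\operatorname{Fun}(\Delta^1,\mathcal{C})\to\mathcal{C}$ preserves pushouts and compute the pushout presenting the arrow $P\to T$ pointwise.)

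To assemble the identity, write $C':=\operatorname{cofib}(P\to T)$, let $e\colon\operatorname{cofib}(f)\xrightarrow{\simeq}C'$ be the equivalence just produced, and let $d\colon C'\to\operatorname{cofib}(g)$ be the map induced by Lemma~\ref{lem:diagram1}(1) from the commutative square with horizontal maps $P\to T$ and $P\xrightarrow{g}Q$ and vertical maps $\operatorname{id}_P$ and $\theta$. Iterating Lemma~\ref{lem:diagram1}, I extend the resulting map of cofiber sequences $(P\to T\to C')\to(P\to Q\to\operatorname{cofib}(g))$, with components $\operatorname{id}_P,\theta,d$, to a $3\times 3$ diagram in which every row and every column is a cofiber sequence. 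Its first column is then $P\xrightarrow{\operatorname{id}}P\to 0$, its last column is $C'\xrightarrow{d}\operatorname{cofib}(g)\to W$, and its last row is $0\to\operatorname{cofib}(\theta)\to W$, where $W$ is the bottom-right object. Reading off the last column gives $W\simeq\operatorname{cofib}(d)$, while reading off the last row gives $W\simeq\operatorname{cofib}(0\to\operatorname{cofib}(\theta))\simeq\operatorname{cofib}(\theta)$. Since $d\circ e=c$ by the uniqueness clause of Lemma~\ref{lem:diagram1}(1), and precomposition by the equivalence $e$ leaves the cofiber unchanged, we get $\operatorname{cofib}(c)\simeq\operatorname{cofib}(d)\simeq\operatorname{cofib}(\theta)$, which is the required identity. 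What remains is the routine bookkeeping of the elementary properties of cofibers in stable $\infty$-categories invoked above.
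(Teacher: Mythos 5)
The paper does not prove this lemma at all: it is quoted directly from \cite[Lemma 2.21]{2019InMat.216..241A} and used as a black box, so there is no in-text argument to compare yours against. Your proof is correct and self-contained. The reduction of the biconditional to the single equivalence $\operatorname{cofib}(c)\simeq\operatorname{cofib}(\theta)$ is sound, and the two facts you then invoke (a map in a stable $\infty$-category is an equivalence iff its cofiber vanishes; a commuting square is a pushout iff the comparison map out of the actual pushout is an equivalence) are standard. The one step you rightly flag as potentially circular --- that $\operatorname{cofib}(f)\simeq\operatorname{cofib}(P\to T)$ for the genuine pushout square --- is handled correctly by your Fubini argument, which is exactly the vertical pasting of pushouts, i.e., the transpose of Lemma~\ref{lem:diagram3}, so it is available with the paper's own toolkit. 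The $3\times 3$ construction identifying $\operatorname{cofib}(d)$ with $\operatorname{cofib}(\theta)$ is an instance of Lemma~\ref{lem:diagram1}(3) up to transposing rows and columns, and the identity $d\circ e\simeq c$ does follow from the uniqueness clause of Lemma~\ref{lem:diagram1}(1) applied to the vertically pasted squares. A marginally shorter finish is available: once $\operatorname{cofib}(f)\simeq\operatorname{cofib}(P\to T)$ is known, the map of cofiber sequences $(P\to T\to C')\to(P\to Q\to\operatorname{cofib}(g))$ with components $\mathrm{id}_P$, $\theta$, $d$ lets one conclude via the five lemma in the triangulated category $\mathrm{Ho}(\mathcal{C})$ that $d$ is an equivalence iff $\theta$ is; but your cofiber-comparison version is equally valid. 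The only caveats are the usual $\infty$-categorical ones about ``unique'' meaning ``essentially unique,'' which the paper itself elides in Lemma~\ref{lem:diagram1}.
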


\begin{lemma}\cite[Lemma 4.4.2.1]{lurie_2009}\label{lem:diagram3}
Let \( \mathcal{C} \) be an \( \infty \)-category and suppose we are given a functor \( \Delta^2 \times \Delta^1 \to \mathcal{C} \) which we will depict as a diagram
\[
\begin{tikzcd}
X \arrow[r] \arrow[d] & Y \arrow[r] \arrow[d] & Z \arrow[d] \\
X' \arrow[r] & Y' \arrow[r] & Z'
\end{tikzcd}
\]
Suppose that the left square is a pushout in \( \mathcal{C} \). Then the right square is a pushout if and only if the outer square is a pushout.
\end{lemma}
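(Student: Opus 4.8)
The plan is to recast the three ``is a pushout'' assertions as statements about (pointwise) left Kan extensions and then feed them into the transitivity of left Kan extensions along nested fully faithful inclusions \cite[Proposition 4.3.2.8]{lurie_2009}. Label the objects of $\Delta^2\times\Delta^1$ by pairs $(i,j)$ with $0\le i\le 2$, $0\le j\le 1$, matched with the diagram so that $X=(0,0)$, $Y=(1,0)$, $Z=(2,0)$, $X'=(0,1)$, $Y'=(1,1)$, $Z'=(2,1)$; note $\Delta^2\times\Delta^1$ is the nerve of a poset, with maximum $Z'$. Let $\mathcal{D}_0\subseteq\mathcal{D}_1\subseteq\Delta^2\times\Delta^1$ be the full subcategories spanned by $\{X,Y,Z,X'\}$ and $\{X,Y,Z,X',Y'\}$; thus $\mathcal{D}_1$ has a single object more than $\mathcal{D}_0$, namely $Y'$, and $\Delta^2\times\Delta^1$ has a single object more than $\mathcal{D}_1$, namely $Z'$.

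First I would establish three equivalences. (i) \emph{The left square is a pushout if and only if $F|_{\mathcal{D}_1}$ is a left Kan extension of $F|_{\mathcal{D}_0}$:} the only nontrivial instance of the pointwise Kan condition is at the new object $Y'$, whose slice $(\mathcal{D}_0)_{/Y'}$ is the span $Y\leftarrow X\to X'$, so the condition says precisely that the left square exhibits $F(Y')$ as the pushout $F(Y)\sqcup_{F(X)}F(X')$. (ii) \emph{The right square is a pushout if and only if $F$ is a left Kan extension of $F|_{\mathcal{D}_1}$:} here the relevant slice is $(\mathcal{D}_1)_{/Z'}\simeq\mathcal{D}_1$, and one checks that the span $Z\leftarrow Y\to Y'$ includes cofinally into it, so by cofinality-invariance of colimit diagrams \cite[Proposition 4.1.1.8]{lurie_2009} the pointwise Kan condition at $Z'$ is equivalent to the right square being a pushout. (iii) \emph{$F$ is a left Kan extension of $F|_{\mathcal{D}_0}$ if and only if both the left square and the outer square are pushouts:} there are two new objects; the condition at $Y'$ is again that the left square is a pushout, while the condition at $Z'$ reduces, through the cofinal inclusion of the span $Z\leftarrow X\to X'$ into $(\mathcal{D}_0)_{/Z'}\simeq\mathcal{D}_0$, to the outer square being a pushout. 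Throughout I use that a square $\Delta^1\times\Delta^1\to\mathcal{C}$ is a pushout exactly when it exhibits its terminal vertex as a colimit of the restriction to the initial span $\Lambda^2_0$ — essentially the definition of a pushout square together with the pointwise description of left Kan extensions along a fully faithful inclusion.

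Finally I would assemble the pieces. Assume the left square is a pushout. By (i), $F|_{\mathcal{D}_1}$ is a left Kan extension of $F|_{\mathcal{D}_0}$, so transitivity \cite[Proposition 4.3.2.8]{lurie_2009} gives that $F$ is a left Kan extension of $F|_{\mathcal{D}_1}$ if and only if $F$ is a left Kan extension of $F|_{\mathcal{D}_0}$. By (ii) the left-hand side means the right square is a pushout; by (iii) the right-hand side means the left and outer squares are both pushouts, i.e., under our hypothesis, that the outer square is a pushout. Hence the right square is a pushout if and only if the outer square is, which is the assertion. The only step that is more than bookkeeping is the pair of cofinality checks used in (ii) and (iii); by Joyal's $\infty$-categorical form of Quillen's Theorem~A \cite[Theorem 4.1.3.1]{lurie_2009} each reduces to verifying that a few comma posets extracted from $\Delta^2\times\Delta^1$ are weakly contractible, which holds by inspection since each such poset is either a point or a span. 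I expect that to be the only real — and quite mild — obstacle.
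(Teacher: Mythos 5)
The paper gives no proof of this lemma at all---it simply cites Lurie's Lemma 4.4.2.1---and your argument is correct: the three reformulations in terms of pointwise left Kan extensions are right, the two cofinality checks go through (each comma poset is a point or a span, hence weakly contractible), and the transitivity step assembles them into the stated equivalence. This is essentially the same Kan-extension/transitivity argument given in the cited source, so nothing further is needed.
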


\begin{definition}
An \textit{exact} functor between stable \(\infty\)-categories is a functor preserving zero objects and fiber sequences.
\end{definition}

\begin{prop}\cite[Proposition 1.1.4.1]{lurie_2017}\quad\\
A functor between stable \(\infty\)-categories is exact if and only if it preserves finite limits and colimits.
\end{prop}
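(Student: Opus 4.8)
The plan is to establish the two implications separately, with the forward direction being where essentially all the content lies. For the reverse direction, observe that a zero object is in particular a terminal object, i.e.\ a limit of the empty diagram, and that a fiber sequence is by definition a pullback square; hence any functor preserving finite limits automatically preserves zero objects and fiber sequences, and is therefore exact.

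For the forward direction, let $F\colon\mathcal{C}\to\mathcal{D}$ be exact. The first step is to note that $F$ also preserves cofiber sequences: in a stable $\infty$-category a triangle is a fiber sequence if and only if it is a cofiber sequence, so $F$ carries a cofiber sequence of $\mathcal{C}$ to a fiber sequence of $\mathcal{D}$, which is again a cofiber sequence there; applying this to the pushout square defining a cofiber yields $F(\operatorname{cofib}(g))\simeq\operatorname{cofib}(Fg)$ for every morphism $g$ of $\mathcal{C}$. The second, and main, step is to upgrade this to the statement that $F$ preserves \emph{every} pushout square. Given a pushout square in $\mathcal{C}$ with top edge $X\to Y$ and left edge $X\to X'$, I would use Lemma~\ref{lem:diagram1}(1) to extend it to a $\Delta^2\times\Delta^1$-diagram whose two rows $X\to Y\to Z$ and $X'\to Y'\to Z'$ are cofiber sequences; by Lemma~\ref{lem:cofiberofpullback}, the square being a pushout forces the induced map $Z\to Z'$ to be an equivalence. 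Applying $F$ and using the first step, the two rows become cofiber sequences in $\mathcal{D}$ while $FZ\to FZ'$ remains an equivalence, so Lemma~\ref{lem:cofiberofpullback} again shows that the image square is a pushout.

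To finish, $F$ preserves the zero object, hence both the initial and the terminal object of $\mathcal{C}$; since $\mathcal{C}$ admits all finite colimits and a functor out of it preserves them as soon as it preserves pushouts and the initial object (\cite[\S 4.4.2]{lurie_2009}), $F$ preserves all finite colimits. Finally, because $\mathcal{C}$ and $\mathcal{D}$ are stable, pushout squares coincide with pullback squares in each, so $F$ preserves all pullback squares too; combined with preservation of the terminal object this gives preservation of all finite limits, completing the proof. I expect the main obstacle to be the second step, passing from \textquotedblleft preserves cofiber sequences\textquotedblright\ to \textquotedblleft preserves arbitrary pushout squares\textquotedblright: it hinges on constructing the auxiliary diagram with cofiber-sequence rows and on the cofiber characterization of pushouts (Lemma~\ref{lem:cofiberofpullback}), rather than on any direct manipulation of the square itself.
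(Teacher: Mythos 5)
Your argument is correct and is essentially the standard proof of this result (the paper itself does not prove it, only citing \cite[Proposition 1.1.4.1]{lurie_2017}): both directions check out, and the key step --- upgrading preservation of cofiber sequences to preservation of arbitrary pushouts via the cofiber criterion of Lemma \ref{lem:cofiberofpullback}, then invoking the finite-(co)limit reduction of \cite[\S 4.4.2]{lurie_2009} together with the coincidence of pushouts and pullbacks in stable $\infty$-categories --- is exactly how the cited reference proceeds.
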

The collection \(\mathrm{Cat}_{\infty}^{\mathrm{ex}} \subseteq \mathrm{Cat}_{\infty}\) of stable \(\infty\)-categories and exact functors is a subcategory.

\begin{thm}\cite[Theorem 1.1.4.4,Proposition 1.1.4.6]{lurie_2017}
    The $\infty$-category $\mathrm{Cat}^\mathrm{ex}$ admits all finite limits and filtered colimits.
\end{thm}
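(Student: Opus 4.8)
The plan is to compute finite limits and filtered colimits directly inside the ambient $\infty$-category $\mathrm{Cat}_\infty$ — which admits all small limits and colimits — and then to check that the result still lies in $\mathrm{Cat}_\infty^{\mathrm{ex}}$, that the projection (resp.\ insertion) functors are exact, and that it enjoys the relevant universal property \emph{inside} the subcategory $\mathrm{Cat}_\infty^{\mathrm{ex}}$. The single technical ingredient shared by both halves is that finite limits and finite colimits in a limit, resp.\ in a filtered colimit, of $\infty$-categories are created by the canonical projections, resp.\ insertions, provided the transition functors of the diagram preserve finite limits and colimits — which, by the characterization of exact functors recorded above, is precisely the condition that they be exact.

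\emph{Finite limits.} Let $p\colon I\to\mathrm{Cat}_\infty^{\mathrm{ex}}$ be a diagram with $I$ finite, and form $\mathcal{C}=\lim_I p$ in $\mathrm{Cat}_\infty$, with projections $\pi_i\colon\mathcal{C}\to p(i)$. First I would check that every diagram $K\to\mathcal{C}$ with $K$ finite admits a limit and a colimit computed by the $\pi_i$: this uses the fibration model of $\lim_I p$ together with the fact that the fibrewise (co)limits, transported along the exact transition functors, assemble into a section realizing the (co)limit in $\mathcal{C}$. Granting this, $\mathcal{C}$ is pointed — the zero objects of the $p(i)$, being preserved by the transition functors, assemble to a zero object — every morphism of $\mathcal{C}$ has a fiber and a cofiber, and a triangle in $\mathcal{C}$ is a fiber sequence iff each of its $\pi_i$-images is, iff (by stability of each $p(i)$) each $\pi_i$-image is a cofiber sequence, iff the triangle is a cofiber sequence; hence $\mathcal{C}$ is stable. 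Each $\pi_i$ preserves finite limits and colimits, hence is exact. Finally, for any stable $\mathcal{D}$ a functor $\mathcal{D}\to\mathcal{C}$ is the same datum as a compatible family $\mathcal{D}\to p(i)$, and it is exact exactly when every component is — again because finite (co)limits of $\mathcal{C}$ are levelwise — so $\mathcal{C}$ together with the $\pi_i$ is the limit of $p$ in $\mathrm{Cat}_\infty^{\mathrm{ex}}$.

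\emph{Filtered colimits.} Let $p\colon I\to\mathrm{Cat}_\infty^{\mathrm{ex}}$ be a filtered diagram and set $\mathcal{C}=\operatorname{colim}_I p$ in $\mathrm{Cat}_\infty$, with insertions $\iota_i\colon p(i)\to\mathcal{C}$. Here the crucial point is that any diagram $K\to\mathcal{C}$ indexed by a finite simplicial set factors, essentially uniquely, through some $\iota_i$, finite simplicial sets being compact objects of $\mathrm{Cat}_\infty$; combined with exactness of the transition functors, this shows that the finite limits and colimits present at the stages persist to $\mathcal{C}$ and are compatible with the $\iota_i$. Hence $\mathcal{C}$ is pointed, admits all fibers and cofibers, and a triangle in $\mathcal{C}$ — being the image of a triangle at some stage — is a fiber sequence iff it is a cofiber sequence, since this holds stagewise; thus $\mathcal{C}$ is stable and each $\iota_i$ is exact. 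For any stable $\mathcal{D}$, a functor $F\colon\mathcal{C}\to\mathcal{D}$ is exact iff each composite $F\circ\iota_i\colon p(i)\to\mathcal{D}$ is exact — the forward direction is immediate, and the reverse follows because each finite (co)limit cone in $\mathcal{C}$ is, up to equivalence, the $\iota_j$-image of a (co)limit cone in some $p(j)$ — whence $\mathrm{Fun}^{\mathrm{ex}}(\mathcal{C},\mathcal{D})\simeq\lim_i\mathrm{Fun}^{\mathrm{ex}}(p(i),\mathcal{D})$, exhibiting $\mathcal{C}$ as the filtered colimit of $p$ in $\mathrm{Cat}_\infty^{\mathrm{ex}}$.

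The main obstacle, common to both parts, is the book-keeping behind the phrases ``computed levelwise'' and ``persists at a finite stage'': making rigorous that the projections jointly create finite (co)limits in a limit of $\infty$-categories with exact transition functors, and that every finite diagram in a filtered colimit of $\infty$-categories factors coherently through a finite stage. Both rest on the straightening/unstraightening formalism and on the compactness of finite simplicial sets in $\mathrm{Cat}_\infty$. Since the statement is identical to \cite[Theorem 1.1.4.4, Proposition 1.1.4.6]{lurie_2017}, one may of course simply invoke those; the sketch above is the argument they encode.
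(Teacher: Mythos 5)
The paper offers no argument for this statement beyond the citation to Lurie's Theorem 1.1.4.4 and Proposition 1.1.4.6, and your sketch is a correct outline of precisely the argument those results encode: form the (co)limit in $\mathrm{Cat}_\infty$, use exactness of the transition functors (plus, for the filtered case, compactness of finite simplicial sets) to see that finite (co)limits are computed levelwise or at a finite stage, deduce stability of the resulting $\infty$-category, and verify the universal property in $\mathrm{Cat}_\infty^{\mathrm{ex}}$. Nothing further is needed; the remaining book-keeping you flag is exactly what the cited reference supplies.
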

\begin{definition}
    An $\infty$-functor between $\infty$-categories is called \emph{left exact} if it preserves all finite limits.
\end{definition}
\begin{lemma}\cite[Corollary]{lurie_2009}\label{lem:leftexact1}
    An $\infty$-functor $F:\mathcal{C}\to \mathcal{D}$ out of an $\infty$-category $\mathcal{C}$ that has all finite colimits is left exact if and only if it preserves pullbacks and the terminal object.
\end{lemma}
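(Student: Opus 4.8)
The forward implication I would dispense with immediately: the terminal object is the limit of the empty diagram and a pullback is a limit indexed by the finite simplicial set $\Lambda^{2}_{2}$, so any left exact $F$ preserves both. The real content is the converse, and here the plan is to prove, by induction on the number of nondegenerate simplices of $K$, that $F$ preserves $K$-indexed limits for every finite simplicial set $K$; since $F$ is assumed to preserve pullbacks and a terminal object, all such limits exist in $\mathcal{C}$ by \cite[Corollary 4.4.2.4]{lurie_2009}, so this statement makes sense and yields left exactness.

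First I would record two easy ingredients to feed the induction. (i) $F$ preserves finite products: a binary product $X\times Y$ is the pullback of $X\to\ast\leftarrow Y$, whence $F(X\times Y)\xrightarrow{\simeq}FX\times FY$ since $F$ preserves $\ast$ and pullbacks, and the empty product is $\ast$ (iterating gives all finite products). (ii) The limit of any diagram indexed by $\Delta^{n}$ is its value at the initial vertex $0$, so $\Delta^{n}$-indexed limits are computed by evaluation and are preserved by every functor. The base case $K=\emptyset$ of the induction is then just the hypothesis that $F$ preserves the terminal object.

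For the inductive step I would write $K=K_{0}\cup_{\partial\Delta^{n}}\Delta^{n}$ as a pushout along a boundary inclusion, obtained by peeling off a nondegenerate simplex of top dimension, so that $K_{0}$ and $\partial\Delta^{n}$ each have strictly fewer nondegenerate simplices than $K$. The crux is a gluing principle for limit diagrams: applying $(-)^{\triangleleft}=\Delta^{0}\star(-)$ and using that the join carries this pushout to $K^{\triangleleft}\cong K_{0}^{\triangleleft}\cup_{(\partial\Delta^{n})^{\triangleleft}}(\Delta^{n})^{\triangleleft}$, one identifies the $\infty$-category of cones on a diagram $\bar p\colon K\to\mathcal{C}$ with the fibre product $\mathcal{C}_{/\bar p|_{K_{0}}}\times_{\mathcal{C}_{/\bar p|_{\partial\Delta^{n}}}}\mathcal{C}_{/\bar p|_{\Delta^{n}}}$ compatibly with the projections to $\mathcal{C}$, and thereby realizes $\lim_{K}\bar p$ as the pullback $(\lim_{K_{0}}\bar p)\times_{\lim_{\partial\Delta^{n}}\bar p}(\lim_{\Delta^{n}}\bar p)$, the two maps being the apex maps induced by restricting the respective limit cones. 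Lemma \ref{lem:diagram3} would be the tool for recognizing the resulting composite squares as pullbacks. Granting this, the induction closes: $F$ preserves $\lim_{K_{0}}$ and $\lim_{\partial\Delta^{n}}$ by inductive hypothesis, $\lim_{\Delta^{n}}$ by (ii), and the gluing pullback by assumption, hence it preserves $\lim_{K}$; as $K$ was an arbitrary finite simplicial set, $F$ is left exact.

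The hard part is exactly this gluing principle, and the obstacle is that it cannot be argued with limit \emph{objects} alone, since the terminal object of a fibre product of $\infty$-categories need not be the pair of terminal objects of the factors. One must work at the level of cones (over-categories) and verify that the pertinent restriction functors between diagram $\infty$-categories are categorical fibrations, so that the pushout of indexing simplicial sets really induces the claimed fibre product of slices, and then check that passing to terminal objects produces the asserted pullback square of limits. This is precisely the combinatorial heart of \cite[\S4.4.2]{lurie_2009}, and the efficient route is to invoke that development rather than to reprove it.
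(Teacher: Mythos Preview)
The paper offers no proof of this lemma: it is cited as a corollary from \cite{lurie_2009} (with no number given; the intended result is in \S4.4.2 of HTT). So there is nothing in the paper to compare against. Your outline is essentially Lurie's own argument: induct on nondegenerate simplices, decompose $K=K_0\cup_{\partial\Delta^n}\Delta^n$, and express $\lim_K$ as a pullback of $\lim_{K_0}$, $\lim_{\partial\Delta^n}$, and $\lim_{\Delta^n}$. You rightly identify the gluing step as the substantive point and defer to Lurie for it; this is exactly what the paper does, only more tersely.

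Two small remarks. First, the statement as printed contains a typo: ``finite colimits'' should read ``finite limits'' (left exactness concerns limits, and the lemma is later applied to $\mathrm{Cat}_\infty^{\mathrm{ex}}$ in Lemma~\ref{lem:bgapwleftexact} to check preservation of pullbacks and terminal objects). Second, your parenthetical ``all such limits exist in $\mathcal{C}$ by \cite[Corollary 4.4.2.4]{lurie_2009}'' has the logic slightly inverted: the existence of finite limits in $\mathcal{C}$ is a \emph{hypothesis} on $\mathcal{C}$, not a consequence of what $F$ preserves. The cited corollary is the right one, but its role is to say that having pullbacks and a terminal object already forces $\mathcal{C}$ to have all finite limits, so the hypothesis may be stated either way.
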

An $\infty$-category is \textit{idempotent-complete} if the image of this category under the Yoneda embedding is closed with respect to retracts. We denote the category of small idempotent-complete stable $\infty$-categories as $\mathrm{Cat}^{\mathrm{perf}}$. The inclusion $\mathrm{Cat}^{\mathrm{perf}} \to \mathrm{Cat}^{\rm ex}$ has a left adjoint, referred to as $\mathrm{Idem}:\mathrm{Cat}^{\rm ex} \to \mathrm{Cat}^{\mathrm{perf}}$, which is known as \textit{idempotent completion} \cite{lurie_2009}. Hence, the inclusion $\mathrm{Cat}^{\mathrm{perf}} \to \mathrm{Cat}^{\rm ex}$ is left exact.

\subsection{$\mathcal{E}$-linear categories}
\begin{definition}
     Denote by
     \begin{itemize}
         \item $\mathcal{P}\mathrm{r}^\mathrm{L}_\mathrm{St}$, the $\infty$-category of stable presentable $\infty$-categories and left adjoint functors.
         \item $\mathrm{Fun}^\mathrm{ex}(\mathcal{A},\mathcal{B})$, the full subcategory of $\mathrm{Fun}(\mathcal{A},\mathcal{B})$ spanned by exact functors.
         \item $\mathrm{Fun}^\mathrm{L}(\mathcal{A},\mathcal{B})$, the full subcategory of $\mathrm{Fun}(\mathcal{A},\mathcal{B})$ spanned by left adjoint functors.
     \end{itemize}
\end{definition}
\begin{definition}
    Let $\mathcal{A}$ be a small, stable, and idempotent complete $\infty$-category. For $X,Y\in\mathcal{A}$, denote by $\mathrm{Map}^\mathrm{Sp}_\mathcal{A}(X,Y)$ the mapping spectrum of $X,Y$.
\end{definition}
From \cite[Section 3.1]{blumberg_gepner_tabuada_2013}, $\mathrm{Cat}^\mathrm{perf}$ and $\mathcal{P}\mathrm{r}^\mathrm{L}_\mathrm{St}$ admit (closed) symmetric monoidal structures such that $\mathrm{Ind}:\mathrm{Cat}^\mathrm{perf}\to\mathcal{P}\mathrm{r}^\mathrm{L}_\mathrm{St}$ is a symmetric monoidal functor. Their tensor products will be denoted by $\otimes$ and $\otimes_\mathrm{L}$, respectively. The universal properties are:
\begin{itemize}
    \item Given $\mathcal{A},\mathcal{B},\mathcal{C}\in \mathrm{Cat}^\mathrm{perf}$, we have equivalence of $\infty$-categories
    \[
    \mathrm{Fun}^\mathrm{ex}(\mathcal{A}\otimes\mathcal{B},\mathcal{C})\simeq\mathrm{Fun}^\mathrm{ex}(\mathcal{A},\mathrm{Fun}^\mathrm{ex}(\mathcal{B},\mathcal{C})).
    \]
    \item Given $\mathcal{A},\mathcal{B},\mathcal{C}\in \mathrm{Cat}_\mathrm{St}^\mathrm{L}$, we have equivalence of $\infty$-categories
    \[
    \mathrm{Fun}^\mathrm{L}(\mathcal{A}\otimes^\mathrm{L}\mathcal{B},\mathcal{C})\simeq\mathrm{Fun}^\mathrm{L}(\mathcal{A},\mathrm{Fun}^\mathrm{L}(\mathcal{B},\mathcal{C})).
    \]
\end{itemize}
\begin{definition}
     Let $\mathcal{E}\in\mathrm{CAlg}(\mathrm{Cat}^\mathrm{perf})$ be a commutative algebra in $\mathrm{Cat}^\mathrm{perf}$, i.e., a small, stable, and idempotent complete symmetric monoidal $\infty$-category whose tensor product $\mathcal{E} \times \mathcal{E} \to \mathcal{E}$ is exact in each variable. We set
\begin{itemize}
\item $\mathrm{Cat}^\mathrm{perf}(\mathcal{E}):=\operatorname{Mod}_{\mathcal{E}}(\mathrm{Cat}^\mathrm{perf})$.
\item $\mathcal{P}\mathrm{r}^\mathrm{L}(\mathcal{E}) := \operatorname{Mod}_{\operatorname{Ind}(\mathcal{E})}(\mathcal{P}\mathrm{r}^\mathrm{L}_{\mathrm{St}})$.
\end{itemize}
\end{definition}
\begin{remark}
    For the definition of commutative algebra and its module, refer to \cite[Section 4.5]{lurie_2017}
\end{remark}
\begin{remark}
    $\mathrm{Cat}^\mathrm{perf}(\mathrm{Sp}^\omega)=\mathrm{Cat}^\mathrm{perf},\mathcal{P}\mathrm{r}^\mathrm{L}_\mathrm{St}(\mathrm{Sp}^\omega)=\mathcal{P}\mathrm{r}^\mathrm{L}_\mathrm{St}$ 
\end{remark}
Recall to \cite[Theorem 4.5.2.1]{lurie_2017}, the tensor product $\otimes$ and the internal Hom object $\mathrm{Fun}^\mathrm{ex}(-,-)$ induces the tensor product $\otimes_\mathcal{E}$ and the internal Hom object $\mathrm{Fun}^\mathrm{ex}_\mathcal{E}(-,-)$ on $\mathrm{Cat}^\mathrm{perf}(\mathcal{E})$. Similarly, the tensor product $\otimes^\mathrm{L}$ and the internal Hom object $\mathrm{Fun}^\mathrm{L}(-,-)$ induces the tensor product $\otimes^\mathrm{L}_\mathcal{E}$ and the internal Hom object $\mathrm{Fun}^\mathrm{L}_\mathcal{E}(-,-)$ on $\mathrm{Cat}^\mathrm{perf}(\mathcal{E})$. And $\mathrm{Ind}$ lifts to a symmetric monoidal functor
\[
\mathrm{Ind}:\mathrm{Cat}^\mathrm{perf}(\mathcal{E})\to\mathcal{P}\mathrm{r}^\mathrm{L}(\mathcal{E}).
\]
Objects in $\mathrm{Cat}^\mathrm{perf}(\mathcal{E})$ are called \emph{$\mathcal{E}$-linear} $\infty$-categories or \emph{$\mathcal{E}$-module} $\infty$-categories. Objects in $\mathrm{Fun}^\mathrm{ex}_\mathcal{E}(-,-)$ are called \emph{$\mathcal{E}$-linear} functors or \emph{$\mathcal{E}$-module} functors.
\begin{prop}\cite[Proposition 4.7]{Hoyois_2017}
    For every $\mathcal{E}\in\mathrm{CAlg}(\mathrm{Cat}^\mathrm{perf})$, $\mathrm{Cat}^\mathrm{perf}(\mathcal{E})$ is compactly generated.
\end{prop}
If $\mathcal{E}$ is a commutative algebra in $\mathrm{Cat}^\mathrm{perf}$ and $\mathcal{A} \in \mathrm{Cat}^\mathrm{perf}(\mathcal{E})$, then $\mathcal{A}$ is naturally enriched over $\operatorname{Ind}(\mathcal{E})$. Indeed, given $X \in \mathcal{A}$, the functor $\mathcal{E} \to \mathcal{A}$ sending $E$ to $E \otimes X$ preserves finite colimits and hence admits an ind-right adjoint $\mathrm{Map}_\mathcal{A}^\mathcal{E}(X, -): \mathcal{A} \to \operatorname{Ind}(\mathcal{E})$. For $X,Y\in\mathcal{A},E\in\mathcal{E}$,
    \[
    \mathrm{Map}^{\mathrm{Sp}}_{\mathcal{E}}(E, \mathrm{Map}_{\mathcal{A}}^{\mathcal{E}}(X, Y)) \simeq \mathrm{Map}^{\mathrm{Sp}}_{\mathcal{A}}(E \otimes X, Y).
    \]
    As a consequence,
    \[
    \pi_n\mathrm{Map}_{\mathcal{A}}^{\mathcal{E}}(X, Y)\simeq \pi_n\mathrm{Map}_{\mathcal{A}}^\mathrm{Sp}(X, Y)
    \]
    for all $n\in\mathbb{Z}$.

One can hence define a functor
\[
\mathcal{A} \to \operatorname{Fun}^\mathrm{ex}(\mathcal{A}^\mathrm{op}, \operatorname{Ind}(\mathcal{E}))
\]
given informally by $a \mapsto \mathrm{Map}_\mathcal{A}^\mathcal{E}(-, a)$. When $\mathcal{E} = \mathrm{Sp}^\omega$, the functor is fully faithful, but this is not true for more general $\mathcal{E}$.

\begin{definition}
    Let $\mathcal{E}$ be a commutative algebra in $\mathrm{Cat}^\mathrm{perf}$. $\mathcal{E}$ is called \emph{rigid} if every object of $\mathcal{E}$ is dualizable.
\end{definition}
In that case, we will show that $\mathcal{A} \to \operatorname{Fun}^\mathrm{ex}(\mathcal{A}^\mathrm{op}, \operatorname{Ind}(\mathcal{E}))$ factors through a fully faithful $\mathcal{E}$-linear embedding $\mathcal{A} \hookrightarrow \operatorname{Fun}^\mathrm{ex}_\mathcal{E}(\mathcal{A}^\mathrm{op}, \operatorname{Ind}(\mathcal{E}))$.

We denote by $\mathrm{CAlg}^\mathrm{rig}(\mathrm{Cat}^\mathrm{perf}) \subset \mathrm{CAlg}(\mathrm{Cat}^\mathrm{perf})$ the full subcategory spanned by the rigid symmetric monoidal $\infty$-categories.
\begin{prop}\cite[Proposition 4.9]{Hoyois_2017}\quad\\
    Let $\mathcal{E}\in \mathrm{CAlg}^\mathrm{rig}(\mathrm{Cat}^\mathrm{perf})$. Then
    \begin{enumerate}[label=(\arabic*)]
\item There is a canonical equivalence of symmetric monoidal $\infty$-categories
\[
\mathcal{E} \simeq \mathcal{E}^\mathrm{op}, \quad e \mapsto e^\vee = \mathrm{Map}_{\mathcal{E}}^{\mathcal{E}}(e, \mathbf{1}).
\]
\item For any $\mathcal{A} \in \mathcal{P}\mathrm{r}^\mathrm{L}(\mathcal{E})$, the action of $\mathcal{E}$ on $\mathcal{A}$ restricts to the full subcategory $\mathcal{A}^\omega \subset \mathcal{A}$ of compact objects. In particular, if $\mathcal{A}$ is compactly generated, then it belongs to the essential image of the functor $\mathrm{Ind}:\mathrm{Cat}^\mathrm{perf}(\mathcal{E}) \to \mathcal{P}\mathrm{r}^\mathrm{L}(\mathcal{E})$.

\item For any $\mathcal{E}$-module $\infty$-categories $\mathcal{A}$ and $\mathcal{B}$ and any $F \in \mathrm{Fun}^\mathrm{L}_\mathcal{E}(\mathcal{A}, \mathcal{B})$, the right adjoint $G: \mathcal{B} \to \mathcal{A}$ of $F$ has a canonical structure of $\mathcal{E}$-module functor, and the unit $\mathrm{id}_\mathcal{A} \to G \circ F$ and counit $F \circ G \to \mathrm{id}_\mathcal{B}$ have canonical structures of $\mathcal{E}$-module natural transformations.

\item Let $\mathcal{A}$ and $\mathcal{B}$ be arbitrary $\mathcal{E}$-module $\infty$-categories. Then there is a canonical equivalence of $\mathcal{E}$-module $\infty$-categories
\[
\mathrm{Fun}^\mathrm{L}_\mathcal{E}(\mathcal{A}, \mathcal{B}) \simeq \mathrm{Fun}^\mathrm{R}_\mathcal{E}(\mathcal{B}, \mathcal{A})^\mathrm{op},
\]
sending a left adjoint $\mathcal{E}$-module functor to its right adjoint.
\end{enumerate}
\end{prop}
\subsection{Additive invariants and localizing invariants}
\begin{definition}\cite[Definition 5.12, Proposition 5.15]{blumberg_gepner_tabuada_2013}\quad\\
     An \textit{exact sequence} of small stable idempotent-complete $\infty$-categories $\mathcal{A}\to\mathcal{B}\to\mathcal{C}$ is a sequence where the composite morphism is zero, the functor $\mathcal{A} \to \mathcal{B}$ is fully faithful, and the morphism from $\mathcal{B}/\mathcal{A}$ to $\mathcal{C}$ constitutes an equivalence after idempotent-completion. The map $\mathcal{B}\to\mathcal{C}$ is called a \emph{Verdier projection}.
\end{definition}
\begin{definition}\cite[Definition 5.18]{blumberg_gepner_tabuada_2013}\quad\\
    A \textit{split-exact} sequence of small stable $\infty$-categories is an exact sequence $\mathcal{A}\xrightarrow{f}\mathcal{B}\xrightarrow{g}\mathcal{C}$ with exact functors $i:\mathcal{B}\to\mathcal{A}$ and $j:\mathcal{C}\to\mathcal{B}$, which serve as right adjoints to $f$ and $g$, respectively, such that $i\circ f\simeq\mathrm{Id}$ and $g\circ j\simeq\mathrm{Id}$.
\end{definition}
\begin{definition}\cite[Definition 5.3]{Hoyois_2017}\quad\\
Let $\mathcal{E}\in\mathrm{CAlg}^\mathrm{rig}(\mathrm{Cat}^\mathrm{perf})$. A sequence
\[
\mathcal{A} \xrightarrow{f} \mathcal{B} \xrightarrow{g} \mathcal{C}
\]
in $\mathrm{Cat}^\mathrm{perf}(\mathcal{E})$ is called \emph{exact} (resp. \emph{split exact}) if its image by the forgetful functor $\mathrm{Cat}^\mathrm{perf}(\mathcal{E}) \to \mathrm{Cat}^\mathrm{perf}$ is exact (resp. split exact).
\end{definition}
\begin{definition}\cite[Definition 5.11]{Hoyois_2017}
    Let $\mathcal{X}$ be a stable presentable $\infty$-category and let $L: \mathrm{Cat}^\mathrm{perf}(\mathcal{E}) \to \mathcal{X}$ be a functor. We say that $L$ is an \emph{additive invariant} if the following conditions are satisfied:
\begin{enumerate}
\item $L$ preserves filtered colimits.
\item $L$ preserves zero objects.
\item $L$ sends split exact sequences in $\mathrm{Cat}^\mathrm{perf}(\mathcal{E})$ to cofiber sequences in $\mathcal{X}$.
\end{enumerate}
We denote by $\mathrm{Fun}_\mathrm{add}(\mathrm{Cat}^\mathrm{perf}(\mathcal{E}), \mathcal{X})$ the $\infty$-category of additive invariants with values in $\mathcal{X}$.
\end{definition}
\begin{definition}\cite[Definition 5.16]{Hoyois_2017}
    Let $\mathcal{X}$ be a stable presentable $\infty$-category and let $L: \mathrm{Cat}^\mathrm{perf}(\mathcal{E}) \to \mathcal{X}$ be a functor. We say that $L$ is an \emph{localizing invariant} if the following conditions are satisfied:
\begin{enumerate}
\item $L$ preserves filtered colimits.
\item $L$ preserves zero objects.
\item $L$ sends exact sequences in $\mathrm{Cat}^\mathrm{perf}(\mathcal{E})$ to cofiber sequences in $\mathcal{X}$.
\end{enumerate}
We denote by $\mathrm{Fun}_\mathrm{loc}(\mathrm{Cat}^\mathrm{perf}(\mathcal{E}), \mathcal{X})$ the $\infty$-category of localizing invariants with values in $\mathcal{X}$.
\end{definition}
\begin{example}
    \cite[Section 5.2,5.3]{Hoyois_2017} constructs the universal additive invariant $\mathcal{U}_\mathrm{add}:\mathrm{Cat}^\mathrm{perf}(\mathcal{E})\to\mathrm{Mot}(\mathcal{E})$ and the universal localizing invariant $\mathcal{U}_\mathrm{loc}:\mathrm{Cat}^\mathrm{perf}(\mathcal{E})\to\mathbb{M}\mathrm{ot}(\mathcal{E})$.
\end{example}
\cite[Section 5]{blumberg_gepner_tabuada_2013} construct the connective K-theory $K^\mathrm{cn}:\mathrm{Cat}^{\mathrm{ex}}\to\mathrm{Sp}$ of small stable $\infty$-categories. Its restriction on $\mathrm{Cat}^\mathrm{perf}$ is an additive invariant. It is not a Morita invariant. The non-connective K-theory $K:\mathrm{Cat}^{\mathrm{ex}}\to\mathrm{Sp}$ of small stable $\infty$-categories, which is constructed by \cite[Section 7]{blumberg_gepner_tabuada_2013}, is a Morita invariant. Besides, its restriction on $\mathrm{Cat}^\mathrm{perf}$ is a localizing invariant.

There are many other different constructions of higher algebraic K-theory. Waldhausen\cite{Waldhausen_1985} constructed algebraic K-theory for Waldhausen categories. Also, algebraic K-theory are defined for model categories of simplicial categories and dg categories. However, they are very difficult to compute. One of the reason we choose the algebraic K-theory of $\infty$-categories is easy. Except for \cite{blumberg_gepner_tabuada_2013}, the K-theory of $\infty$-categories is also studied by \cite{barwick_2016}. Other reason is because it very extensive. \cite{Maxime_Vladimir_Christoph_2024} shows every spectrum is the k-theory of a stable $\infty$-category. So we expect that it covers any other constructions of algebraic K-theory.

\subsection{Exactness properties based on a class of functors}
\begin{definition}
    Let $\mathfrak{S}$ be a class of some $\mathcal{E}$-linear functors. A functor $H:\mathrm{Cat}^\mathrm{perf}(\mathcal{E})\to\mathrm{Ab}$ into abelian groups is called
    \begin{itemize}
        \item \emph{$\mathfrak{S}$-epimorphic}, if for every exact functor $\mathcal{F}\in\mathfrak{S}$, $H(\mathcal{F})$ is an epimorphism.
        \item \emph{$\mathfrak{S}$-monomorphic}, if for every exact functor $\mathcal{F}\in\mathfrak{S}$, $H(\mathcal{F})$ is a monomorphism.
        \item \emph{$\mathfrak{S}$-invariant}, if for every exact functor $\mathcal{F}\in\mathfrak{S}$, $H(\mathcal{F})$ is an isomorphism.
    \end{itemize}
\end{definition}
There are some obvious observations.
\begin{lemma}\label{lem:exactproperty1}
    Let $\mathfrak{S}_1\subseteq\mathfrak{S}_2$ be two classes of some $\mathcal{E}$-linear functors and $H:\mathrm{Cat}^\mathrm{perf}(\mathcal{E})\to\mathrm{Ab}$ be a functor. If $H$ is $\mathfrak{S}_2$-epimorphic(resp. monomorphic, invariant), it is $\mathfrak{S}_1$-epimorphic(resp. monomorphic, invariant).
\end{lemma}
\begin{proof}
    This is trivial.
\end{proof}
\begin{lemma}\label{lem:exactproperty2}
    Let $\{\mathfrak{S}_\alpha\}_{\alpha\in I}$ be some classes of some $\mathcal{E}$-linear functors and $H:\mathrm{Cat}^\mathrm{perf}(\mathcal{E})\to\mathrm{Ab}$ be a functor. Then
    \begin{enumerate}[label=(\arabic*)]
        \item If $H$ is $\mathfrak{S}_\alpha$-epimorphic(resp. monomorphic, invariant) for some $\alpha$, it is $\cap_{\alpha\in I}\mathfrak{S}_\alpha$-epimorphic(resp. monomorphic, invariant).
        \item If $H$ is $\mathfrak{S}_\alpha$-epimorphic(resp. monomorphic, invariant) for every $\alpha$, it is $\cup_{\alpha\in I}\mathfrak{S}_\alpha$-epimorphic(resp. monomorphic, invariant).
    \end{enumerate}
\end{lemma}
\begin{proof}\quad
    \begin{enumerate}[label=(\arabic*)]
        \item Since $\cap_{\alpha\in I}\mathfrak{S}_\alpha\subset\mathfrak{S}_\alpha$, this is from Lemma \ref{lem:exactproperty1}.
        \item For $\mathcal{F}\in \cup_{\alpha\in I}\mathfrak{S}_\alpha$, we can find some $\alpha$ such that $\mathcal{F}\in\mathfrak{S}_\alpha$. Since $H$ is $\mathfrak{S}_\alpha$-epimorphic(resp. monomorphic, invariant) for every $\alpha$, we have $H(\mathcal{F})$ is an epimorphism(resp. monomorphism, isomorphism).
    \end{enumerate}
\end{proof}
\begin{lemma}\label{lem:exactproperty3}
    Let $\mathfrak{S}$ be a class of some $\mathcal{E}$-linear functor and $H_\alpha:\mathrm{Cat}^\mathrm{perf}(\mathcal{E})\to\mathrm{Ab}$ be some functors for $\alpha\in I$. If $H_\alpha$ is $\mathfrak{S}$-epimorphic(resp. monomorphic, invariant) for every $\alpha$, $\Pi_{\alpha\in I}H_\alpha$ is $\mathfrak{S}$-epimorphic(resp. monomorphic, invariant).
\end{lemma}
\begin{proof}
    For $\mathcal{F}\in\mathfrak{S}$, Since $H_\alpha(\mathcal{F})$ is an epimorphism(resp. monomorphism, isomorphism), we have $\Pi_{\alpha\in I}H_\alpha(\mathcal{F})$ is an epimorphism(resp. monomorphism, isomorphism).
\end{proof}
Now we consider the $\infty$-category $\mathrm{Fun}(\mathrm{Cat}^\mathrm{perf}(\mathcal{E}),\mathrm{Ab})$.
\begin{definition}
    A sequence of natural transformations
    \[
    \cdots\to H_{i-1}\to H_i\to H_{i+1}\to\cdots
    \]
    in $\mathrm{Fun}(\mathrm{Cat}^\mathrm{perf}(\mathcal{E}),\mathrm{Ab})$ is called \emph{exact} if
    \[
    \cdots\to H_{i-1}(\mathcal{C})\to H_i(\mathcal{C})\to H_{i+1}(\mathcal{C})\to\cdot
    \]
    is an exact sequence of abelian groups for every $\mathcal{C}\in \mathrm{Cat}^\mathrm{perf}(\mathcal{E})$.
\end{definition}
Then we have a "Five Lemma":
\begin{lemma}[Five Lemma]\label{lem:exactproperty4}
    Let $H_1\to H_2\to H_3\to H_4\to H_5$ be an exact sequence in $\mathrm{Fun}(\mathrm{Cat}^\mathrm{perf}(\mathcal{E}),\mathrm{Ab})$ and $\mathfrak{S}$ be a class of some $\mathcal{E}$-linear functors.
    \begin{enumerate}[label=(\arabic*)]
        \item If $H_1$ is $\mathfrak{S}$-epimorphic, and $H_2,H_4$ are $\mathfrak{S}$-monomorphic, then $H_3$ is $\mathfrak{S}$-monomorphic.
        \item If $H_5$ is $\mathfrak{S}$-monomorphic, and $H_2,H_4$ are $\mathfrak{S}$-epimorphic, then $H_3$ is $\mathfrak{S}$-epimorphic.
        \item If $H_1,H_2,H_4,H_5$ are $\mathfrak{S}$-invariant, then $H_3$ is $\mathfrak{S}$-invariant.
    \end{enumerate}
\end{lemma}
\begin{proof}
    Just from the Five Lemma of abelian groups.
\end{proof}
\begin{definition}
    A natural transformation $H_1\to H_2$ in $\mathrm{Fun}(\mathrm{Cat}^\mathrm{perf}(\mathcal{E}),\mathrm{Ab})$ is an \emph{epimorphism}(resp. a \emph{monomorphism}) if $H_1(\mathcal{C})\to H_2(\mathcal{C})$ is an epimorphism(resp. a monomorphism) for every $\mathcal{C}\in \mathrm{Cat}^\mathrm{perf}(\mathcal{E})$.
\end{definition}
\begin{lemma}
    Let $H_1\to H_2$ be a natural transformation in $\mathrm{Fun}(\mathrm{Cat}^\mathrm{perf}(\mathcal{E}),\mathrm{Ab})$ and $\mathfrak{S}$ be a class of some $\mathcal{E}$-linear functors.
    \begin{enumerate}[label=(\arabic*)]
        \item If $H_1\to H_2$ is an epimorphism and $H_1$ is $\mathfrak{S}$-epimorphic, then $H_2$ is $\mathfrak{S}$-epimorphic.
        \item If $H_1\to H_2$ is a monomorphism and $H_2$ is $\mathfrak{S}$-monomorphic, then $H_1$ is $\mathfrak{S}$-monomorphic.
    \end{enumerate}
\end{lemma}
\begin{proof}\quad
    \begin{enumerate}[label=(\arabic*)]
        \item Apply Lemma \ref{lem:exactproperty4} to $0\to \operatorname{ker}(H_1\to H_2)\to H_1\to H_2\to 0$.
        \item Apply Lemma \ref{lem:exactproperty4} to $0\to H_1\to H_2\to H_2/H_1\to 0$.
    \end{enumerate}
\end{proof}
\begin{lemma}\label{lem:exactproperty6}
    Let $\mathfrak{S}$ be a class of some $\mathcal{E}$-linear functors. Small limit of $\mathfrak{S}$-monomorphic functors in $\mathrm{Fun}(\mathrm{Cat}^\mathrm{perf}(\mathcal{E}),\mathrm{Ab})$ is still $\mathfrak{S}$-monomorphic. Small colimit of $\mathfrak{S}$-epimorphic functors in $\mathrm{Fun}(\mathrm{Cat}^\mathrm{perf}(\mathcal{E}),\mathrm{Ab})$ is still $\mathfrak{S}$-epimorphic.
\end{lemma}
\begin{definition}
    Let $\mathfrak{S}$ be a class of some $\mathcal{E}$-linear functors and $S_1,S_2$ be two classes of $\mathcal{E}$-linear $\infty$-categories. Denote by
    \begin{itemize}
        \item $S_1/\mathfrak{S}$, the class of $\mathcal{E}$-linear functors $\mathcal{F}:\mathcal{A}\to\mathcal{B}$ with $\mathcal{A}\in S_1$ and $\mathcal{F}\in\mathfrak{S}$.
        \item $\mathfrak{S}/S_2$, the class of $\mathcal{E}$-linear functors $\mathcal{F}:\mathcal{A}\to\mathcal{B}$ with $\mathcal{B}\in S_2$ and $\mathcal{F}\in\mathfrak{S}$.
    \end{itemize}
\end{definition}
\begin{definition}
    Let $H:\mathrm{Cat}^\mathrm{perf}(\mathcal{E})\to\mathrm{Ab}$ be a functor. The \emph{kernel} of $H$, denoted by $\ker H$, is the class of objects $\mathcal{C}\in \mathrm{Cat}^\mathrm{perf}(\mathcal{E})$ with $H(\mathcal{C})$ trivial.
\end{definition}
\section{D\'evissage condition for exact functors}
\subsection{Category of sequences}
\begin{definition}\cite[Definition 7.1]{blumberg_gepner_tabuada_2013}
Let \( \operatorname{Gap}([n], \mathcal{C}) \) be the full subcategory of \( \operatorname{Fun}(\mathrm{N}(\operatorname{Ar}[n]), \mathcal{C}) \) spanned by the functors \( X_{\bullet,\bullet}:\mathrm{N}(\operatorname{Ar}[n]) \to \mathcal{C} \) such that, for each \( i \in I \), \( X_{i,i} \) is a zero object of \( \mathcal{C} \), and for each \( i < j < k \), the square
\[
\begin{tikzcd}
X_{i,j} \arrow[r] \arrow[d] & X_{i,k} \arrow[d] \\
X_{j,j} \arrow[r] & X_{j,k}
\end{tikzcd}
\]
is pushout.
\end{definition}
\begin{lemma}\cite[Lemma 7.3]{blumberg_gepner_tabuada_2013}
    Let $\mathcal{C}$ be a stable $\infty$-category. The forgetful functor
    \[
    \mathrm{Gap}([n],\mathcal{C})\to\mathrm{Fun}(\Delta^{n-1},\mathcal{C}),X_{\bullet,\bullet}\mapsto X_{0,\bullet}
    \]
    is an equivalence of $\infty$-category.
\end{lemma}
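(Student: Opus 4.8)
The plan is to realize $\operatorname{Gap}([n],\mathcal{C})$ as a full subcategory of $\operatorname{Fun}(\mathrm{N}(\operatorname{Ar}[n]),\mathcal{C})$ cut out by a pointwise left Kan extension condition, and then transport along the Kan extension equivalences of \cite[\S 4.3.2]{lurie_2009}. Let $\operatorname{Ar}_0\subseteq\operatorname{Ar}[n]$ be the full subposet on the objects $(0,j)$, $0\le j\le n$, together with the diagonal objects $(i,i)$, $0\le i\le n$, and let $J\cong[n]$ be the further full subposet on the $(0,j)$. The key claim is: a functor $X_{\bullet,\bullet}:\mathrm{N}(\operatorname{Ar}[n])\to\mathcal{C}$ lies in $\operatorname{Gap}([n],\mathcal{C})$ if and only if $X_{i,i}$ is a zero object for each $i$ and $X_{\bullet,\bullet}$ is a left Kan extension of its restriction to $\mathrm{N}(\operatorname{Ar}_0)$.

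To prove this claim, note first that inside the comma poset $(\operatorname{Ar}_0)_{/(i,j)}$ for $1\le i<j\le n$ the three objects $(0,j)\leftarrow(0,i)\rightarrow(i,i)$ form a final subposet; this elementary but combinatorial cofinality statement is the only genuinely nontrivial point, and I would verify it directly from the poset relations. Granting it, the pointwise left Kan extension value at $(i,j)$ is the pushout $X_{0,j}\sqcup_{X_{0,i}}X_{i,i}$, and the canonical cocone realizing it is exactly the corresponding Gap square of $X$; so if $X\in\operatorname{Gap}([n],\mathcal{C})$ then this square (which has $X_{i,i}=0$) is a pushout by definition, giving the left Kan extension property, and conversely if the diagonal of $X$ vanishes and $X$ is such a left Kan extension then $X_{i,j}\simeq\operatorname{cofib}(X_{0,i}\to X_{0,j})$ for $i<j$, whereupon the pasting Lemmas \ref{lem:diagram3} and \ref{lem:cofiberofpullback}, applied to $X_{0,i}\to X_{0,j}\to X_{0,k}$ with zeroes placed below, force every square $X_{i,j}\to X_{i,k},\ X_{j,j}\to X_{j,k}$ to be a pushout, i.e.\ $X\in\operatorname{Gap}([n],\mathcal{C})$. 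Coherence is automatic throughout because all the identifications are made inside a single Kan extension.

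It remains to run the bookkeeping. Since $\mathcal{C}$ has all finite colimits, every functor on $\mathrm{N}(\operatorname{Ar}_0)$ admits a left Kan extension along $\mathrm{N}(\operatorname{Ar}_0)\hookrightarrow\mathrm{N}(\operatorname{Ar}[n])$, so by \cite[Proposition 4.3.2.15]{lurie_2009} restriction is an equivalence from the full subcategory of left Kan extensions onto $\operatorname{Fun}(\mathrm{N}(\operatorname{Ar}_0),\mathcal{C})$; intersecting both sides with the object-wise condition that each $(i,i)$ map to a zero object and invoking the claim yields $\operatorname{Gap}([n],\mathcal{C})\simeq\operatorname{Fun}(\mathrm{N}(\operatorname{Ar}_0),\mathcal{C})^{(i,i)\mapsto 0}$. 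Next, the objects $(i,i)$ with $i\ge 1$ form an upward-closed subposet of $\operatorname{Ar}_0$ with complement $J$, and the comma categories $J_{(i,i)/}$ are empty, so sending each such object to a zero (= terminal) object is exactly the condition of being a right Kan extension of the restriction to $\mathrm{N}(J)$; a second application of \cite[Proposition 4.3.2.15]{lurie_2009} gives $\operatorname{Fun}(\mathrm{N}(\operatorname{Ar}_0),\mathcal{C})^{(i,i)\mapsto 0}\simeq\operatorname{Fun}(\mathrm{N}(J),\mathcal{C})^{(0,0)\mapsto 0}$. Finally, since a zero object is initial, $\operatorname{Fun}(\mathrm{N}(J),\mathcal{C})^{(0,0)\mapsto 0}=\operatorname{Fun}(\Delta^{n},\mathcal{C})^{0\mapsto 0}\simeq\operatorname{Fun}(\Delta^{n-1},\mathcal{C})$ in the standard way (left Kan extension from $\Delta^{\{1,\dots,n\}}\hookrightarrow\Delta^{n}$). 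Composing these equivalences and unwinding the restriction maps identifies the composite $\operatorname{Gap}([n],\mathcal{C})\xrightarrow{\sim}\operatorname{Fun}(\Delta^{n-1},\mathcal{C})$ with $X_{\bullet,\bullet}\mapsto X_{0,\bullet}$, which is the asserted forgetful functor.
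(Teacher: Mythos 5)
Your argument is correct, and it is essentially the standard Kan-extension proof of this statement (the paper itself gives no proof, citing \cite[Lemma 7.3]{blumberg_gepner_tabuada_2013}, whose proof — going back to Lurie's treatment of the $S_\bullet$-construction — proceeds by exactly this chain of restrictions along left/right Kan extensions). The cofinality computation for $(\operatorname{Ar}_0)_{/(i,j)}$ and the pasting argument for the converse direction of your key claim both check out, so nothing further is needed.
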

Let $F_{\bullet,\bullet}:X_{\bullet,\bullet}\to Y_{\bullet,\bullet}$ be a morphism in $\mathrm{Gap}([n],\mathcal{C})$, then $\mathrm{fib}(F_{\bullet,\bullet})_{i,j}\simeq\mathrm{fib}(F_{i,j})$ and $\mathrm{cofib}(F_{\bullet,\bullet})_{i,j}\simeq\mathrm{cofib}(F_{i,j})$. Consider the functor \( [n+1] \to [n] \) which sends \( i \mapsto i \) (if \( i \leq n \)) and \( n+1 \mapsto n \). This defines a functor:
\[
\operatorname{Ar}[n+1] \to \operatorname{Ar}[n]
\]
which induces an exact inclusion functor of stable $\infty$-categories
\[
i_n : \mathrm{Gap}([n],\mathcal{C}) \hookrightarrow \mathrm{Gap}([n+1],\mathcal{C}).
\]
Define 
\[\mathrm{Gap}(\mathcal{C}):= \operatorname{colim}\left\{ \mathcal{C} \stackrel{i_1}{\to} \mathrm{Gap}([2],\mathcal{C}) \stackrel{i_2}{\to} \cdots \stackrel{i_n}{\to} \mathrm{Gap}([n+1],\mathcal{C}) \stackrel{i_{n+1}}{\to} \cdots \right\}.
\]
Define by $\mathrm{Gap}^w([n],\mathcal{C})$ the full subcategory of $\mathrm{Gap}([n],\mathcal{C})$ whose objects are $X_{\bullet,\bullet}$ with $X_{0,n}= 0$ an zero object in $\mathcal{C}$. Thus, the exact inclusion $i_{n}$ restricts to an exact inclusion
\[
i^w_n: \mathrm{Gap}^w([n],\mathcal{C}) \hookrightarrow \mathrm{Gap}^w([n+1],\mathcal{C}).
\]
Define 
\[
\mathrm{Gap}^w(\mathcal{C}):= \operatorname{colim}\left\{ 0 \stackrel{i^w_{1}}{\to} \mathrm{Gap}^w([2],\mathcal{C}) \stackrel{i^w_{2}}{\to} \cdots \stackrel{i^w_{n}}{\to} \mathrm{Gap}^w([n+1],\mathcal{C}) \stackrel{i^w_{n+1}}{\to} \cdots \right\}.
\]
  \begin{definition}
  Assume $\mathcal{F}:\mathcal{A}\to\mathcal{C}$ is an exact functor of small stable $\infty$-categories. Define $\mathrm{Gap}([n],\mathcal{F})$ and $\mathrm{Gap}^w([n],\mathcal{F})$ by the pullbacks
  \[
  \begin{tikzcd}
      \mathrm{Gap}([n],\mathcal{F})\ar[r]\ar[d]&\mathrm{Gap}([n],\mathcal{C})\ar[d]\\
      \mathcal{A}^n\ar[r,"\mathcal{F}^n"]&\mathcal{C}^n,
  \end{tikzcd},\begin{tikzcd}
      \mathrm{Gap}^w([n],\mathcal{F})\ar[r]\ar[d]&\mathrm{Gap}^w([n],\mathcal{C})\ar[d]\\
      \mathcal{A}^n\ar[r,"F^n"]&\mathcal{C}^n,
  \end{tikzcd}
  \]
  where $\mathrm{Gap}([n],\mathcal{C})\to\mathcal{C}^n$ is defined by $X_{\bullet,\bullet}\mapsto (X_{0,1},\cdots,X_{n-1,n})$ and $\mathrm{Gap}^w([n],\mathcal{C})\to\mathcal{C}^n$ is its restriction.
\end{definition}
$\mathrm{Gap}([n],\mathcal{F})$ consists of
\begin{itemize}
    \item Objects are pair $(A_\bullet,X_{\bullet,\bullet})$, where $A_\bullet\in \mathcal{A}^n, X_{\bullet,\bullet}\in\mathrm{Gap}([n],\mathcal{C})$ are objects such that $\mathcal{F}(A_i)=X_{i-1,i}$.
    \item Morphims are pair $(f_\bullet,F_{\bullet,\bullet})$, where $f_\bullet\in \mathcal{A}^n, F_{\bullet,\bullet}\in\mathrm{Gap}([n],\mathcal{C})$ are morphisms such that $\mathcal{F}(f_i)=F_{i-1,i}$.
\end{itemize}
Thus, the exact inclusion $i_n$ and $\mathcal{A}^n\to\mathcal{A}^{n+1},(A_1,\cdots,A_n)\mapsto (A_1,\cdots, A_n,0)$ induces an exact inclusion
\[
i_{n,\mathcal{F}}: \mathrm{Gap}([n],\mathcal{F}) \hookrightarrow \mathrm{Gap}([n+1],\mathcal{F})
\]
Define 
\[
\mathrm{Gap}(\mathcal{F}):= \operatorname{colim}\left\{ \mathcal{A} \stackrel{i_{1,\mathcal{F}}}{\to} \mathrm{Gap}([2],\mathcal{F}) \stackrel{i_{2,\mathcal{F}}}{\to} \cdots \stackrel{i^w_{n,\mathcal{A}}}{\to} \mathrm{Gap}^w([n+1],\mathcal{F}) \stackrel{i_{n+1,\mathcal{F}}}{\to} \cdots \right\}.
\]
$\mathrm{Gap}^w([n],\mathcal{F})$ is naturally a full subcategory of $\mathrm{Gap}([n],\mathcal{F})$. Then, the exact inclusion $i_{n,\mathcal{A}}$ restricts to an exact inclusion
\[
i^w_{n,\mathcal{F}}: \mathrm{Gap}^w([n],\mathcal{F}) \hookrightarrow \mathrm{Gap}^w([n+1],\mathcal{F}).
\]
Define 
\[
\mathrm{Gap}^w(\mathcal{F}):= \operatorname{colim}\left\{ 0 \stackrel{i^w_{1,\mathcal{F}}}{\to} \mathrm{Gap}^w([2],\mathcal{F}) \stackrel{i^w_{2,\mathcal{F}}}{\to} \cdots \stackrel{i^w_{n,\mathcal{F}}}{\to} \mathrm{Gap}^w([n+1],\mathcal{F}) \stackrel{i^w_{n+1,\mathcal{F}}}{\to} \cdots \right\}.
\]
$\mathrm{Gap}([n],\mathcal{F}),\mathrm{Gap}^w([n],\mathcal{F})$ and $\mathrm{Gap}(\mathcal{F}),\mathrm{Gap}^w(\mathcal{F})$ are stable. Furthermore, if $\mathcal{C}$ and $\mathcal{A}$ are idempotent-complete, so do them. $\mathrm{Gap},\mathrm{Gap}^w:\mathcal{F}\mapsto\mathrm{Gap}(\mathcal{F}),\mathrm{Gap}^w(\mathcal{F})$ can be seen as functors $\mathrm{Fun}(\Delta^1,\mathrm{Cat}^\mathrm{ex})\to \mathrm{Cat}^\mathrm{ex}$.
\subsection{Definition of d\'evissage condition}
\begin{prop}\label{prop:devissagedefine}
    Let $\mathcal{F}:\mathcal{A}\to\mathcal{C}\in\mathrm{Cat}^{\mathrm{ex}}$ be an exact functor between small stable $\infty$-categories. Then the following are equivalent:
    \begin{enumerate}[label=(\arabic*)]
        \item For any morphism $f:X\to Y$ in $\mathcal{C}$, there is a factorization
    \[
    f:X=X_0\to X_1\to\cdots\to X_n=Y
    \]
    such that $\mathrm{cofib}(X_i\to X_{i+1})\in\mathcal{F}(\mathcal{A})$ up to equivalences.
        \item for any morphism $f:0\to Y$ in $\mathcal{C}$, there is a factorization
    \[
    f:0=X_0\to X_1\to\cdots\to X_n=Y
    \]
    such that $\mathrm{cofib}(X_i\to X_{i+1})\in\mathcal{F}(\mathcal{A})$ up to equivalences.
    \item For any morphism $f:X\to 0$ in $\mathcal{C}$, there is a factorization
    \[
    f:X=X_0\to X_1\to\cdots\to X_n=0
    \]
    such that $\mathrm{cofib}(X_i\to X_{i+1})\in\mathcal{F}(\mathcal{A})$ up to equivalences.
    \end{enumerate}
\end{prop}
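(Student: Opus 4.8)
The plan is to establish the cycle $(1)\Rightarrow(2)\Rightarrow(1)$ together with $(1)\Rightarrow(3)\Rightarrow(1)$. Two of these four implications cost nothing: conditions (2) and (3) are precisely the special cases of (1) in which the morphism $f$ has the form $0\to Y$, respectively $X\to 0$, so $(1)\Rightarrow(2)$ and $(1)\Rightarrow(3)$ are immediate. All the work is in $(2)\Rightarrow(1)$ and the formally dual $(3)\Rightarrow(1)$. For $(2)\Rightarrow(1)$, take an arbitrary morphism $f\colon X\to Y$ in $\mathcal{C}$ and form its cofiber sequence $X\xrightarrow{f}Y\xrightarrow{p}Z$ with $Z=\mathrm{cofib}(f)$; since $\mathcal{C}$ is stable this triangle is simultaneously a fiber sequence, so its defining square is cartesian and identifies $X\simeq Y\times_Z 0=\mathrm{fib}(p)$ in a way compatible with the maps to $Y$ --- in particular the projection $\mathrm{fib}(p)\to Y$ is homotopic to $f$.

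Next I would apply hypothesis (2) to the morphism $0\to Z$ to produce a factorization $0=W_0\to W_1\to\cdots\to W_m=Z$ with every $\mathrm{cofib}(W_i\to W_{i+1})$ equivalent to an object of $\mathcal{F}(\mathcal{A})$, regard it as a functor $W_\bullet\colon\Delta^m\to\mathcal{C}$ equipped with its canonical transformation to the constant diagram $Z$, and form the pointwise pullback $Y_\bullet:=Y\times_Z W_\bullet$ in $\mathrm{Fun}(\Delta^m,\mathcal{C})$. Because limits in functor categories are computed pointwise one gets $Y_0=Y\times_Z 0\simeq X$ and $Y_m=Y\times_Z Z=Y$, and the total composite $Y_0\to\cdots\to Y_m$ agrees, under these identifications, with $f$. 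Because pullbacks compose (the pasting law for pullbacks, i.e. the dual of Lemma \ref{lem:diagram3}, iterated along the tower) every consecutive square
\[
\begin{tikzcd}
Y_i \ar[r] \ar[d] & Y_{i+1} \ar[d] \\
W_i \ar[r] & W_{i+1}
\end{tikzcd}
\]
is cartesian, hence cocartesian since $\mathcal{C}$ is stable, so Lemma \ref{lem:cofiberofpullback} gives $\mathrm{cofib}(Y_i\to Y_{i+1})\simeq\mathrm{cofib}(W_i\to W_{i+1})\in\mathcal{F}(\mathcal{A})$. Thus $X=Y_0\to\cdots\to Y_m=Y$ is the desired factorization of $f$, which proves $(2)\Rightarrow(1)$.

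The implication $(3)\Rightarrow(1)$ is obtained by dualizing every step: given $f\colon X\to Y$, form the fiber sequence $W\to X\xrightarrow{f}Y$ with $W=\mathrm{fib}(f)$ --- which by stability is also a cofiber sequence, so $Y\simeq\mathrm{cofib}(W\to X)$ compatibly with the map from $X$ --- apply hypothesis (3) to $W\to 0$ to get $W=W_0\to\cdots\to W_m=0$ with cofibers in $\mathcal{F}(\mathcal{A})$, and form the pointwise pushout $X_\bullet:=X\sqcup_W W_\bullet$. Then $X_0=X\sqcup_W W=X$, $X_m=X\sqcup_W 0=Y$, the composite recovers $f$, and the pasting law for pushouts (Lemma \ref{lem:diagram3}) makes each consecutive square cocartesian, so Lemma \ref{lem:cofiberofpullback} again transports the cofiber condition and $X=X_0\to\cdots\to X_m=Y$ factors $f$.

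The step I expect to be the real obstacle --- as opposed to bookkeeping --- is checking the two ``boundary'' facts genuinely at the level of $\infty$-categories rather than homotopy categories: that the pointwise (co)limit $Y_\bullet$ (resp. $X_\bullet$) really does have $Y_0\simeq X$ (resp. $X_m\simeq Y$) \emph{with the composite $\Delta^m$-edge homotopic to $f$}, and that its consecutive squares remain (co)cartesian. Both follow from pointwise computation of (co)limits in $\mathrm{Fun}(\Delta^m,\mathcal{C})$, the pasting/two-out-of-three property of (co)cartesian squares (Lemma \ref{lem:diagram3} and its dual), and the stable equivalence between fiber and cofiber sequences; concretely I would restrict $Y_\bullet$ (resp. $X_\bullet$) to the $\Delta^1\times\Delta^1$ and $\Delta^2\times\Delta^1$ subdiagrams picking out a single consecutive square and the $0\to m$ edge, and invoke those three facts there.
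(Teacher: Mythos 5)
Your proposal is correct and follows essentially the same route as the paper's own proof: both reduce $(2)\Rightarrow(1)$ to pulling back a filtration of $\mathrm{cofib}(f)$ along $Y\to\mathrm{cofib}(f)$ and transporting the cofiber condition via the pasting law for (co)cartesian squares and Lemma \ref{lem:cofiberofpullback}, with $(3)\Rightarrow(1)$ handled dually by pushing out a filtration of $\mathrm{fib}(f)$. Your formulation via pointwise pullbacks in $\mathrm{Fun}(\Delta^m,\mathcal{C})$ is just a slightly more careful packaging of the same construction.
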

\begin{proof}
    $(1) \Rightarrow (2),(3)$ is obvious. Now we prove $(2)\Rightarrow (1)$ and $(3)\Rightarrow (1)$.

    $(2)\Rightarrow (1)$. For a morphism $X\to Y$, assume $Z$ is its cofiber. From $(2)$, there is a sequence
    \[
    0=Z_0\to\cdots\to Z_n=Z
    \]
    such that $\operatorname{cofib}(Z_i\to Z_{i+1})\in\mathcal{F}(\mathcal{A})$ up to equivalences. Define $X_i$ by pullback
    \[
    \begin{tikzcd}
        X_i\ar[r]\ar[d]&Y\ar[d]\\
        Z_i\ar[r]&Z
    \end{tikzcd}
    \]
    Then $X_n=Y$ and $X_0=\operatorname{fib}(Y\to Z)=X$. There are also pushout squares in a stable $\infty$-category. Hence, we get pushout squares
    \[
    \begin{tikzcd}
        X_i\ar[r]\ar[d]&X_{i+1}\ar[d]\\
        Z_i\ar[r]&Z_{i+1}
    \end{tikzcd},
    \]
    by Lemma \ref{lem:diagram3}. By Lemma \ref{lem:cofiberofpullback}, we get $\operatorname{cofib}(X_{i}\to X_{i+1})\simeq\operatorname{cofib}(Z_i\to Z_{i+1})\in\mathcal{F}(\mathcal{A})$ up to equivalences. Then the sequence
    \[
    X=X_0\to X_1\to\cdots\to X_n=Y
    \]
    is what we desired.

    $(3)\Rightarrow (1)$. For a morphism $X\to Y$, assume $W$ is its fiber. From $(3)$, there is a sequence
    \[
    W=W_0\to\cdots\to W_n=0
    \]
    such that $\operatorname{cofib}(W_i\to W_{i+1})\in\mathcal{F}(\mathcal{A})$. Define $X_i$ by pushout
    \[
    \begin{tikzcd}
        W\ar[r]\ar[d]&X\ar[d]\\
        W_i\ar[r]&X_i
    \end{tikzcd}
    \]
    Then $X_0=X$ and $X_n=\operatorname{cofib}(W\to X)=Y$. Then we get pushout squares
    \[
    \begin{tikzcd}
        W_i\ar[r]\ar[d]&X_{i}\ar[d]\\
        W_{i+1}\ar[r]&X_{i+1}
    \end{tikzcd}.
    \]
    by Lemma \ref{lem:diagram3}. By Lemma \ref{lem:cofiberofpullback}, we get $\operatorname{cofib}(X_{i}\to X_{i+1})\simeq\operatorname{cofib}(W_i\to W_{i+1})\in\mathcal{F}(\mathcal{A})$ up to equivalences. Then the sequence
    \[
    X=X_0\to X_1\to\cdots\to X_n=Y
    \]
    is what we desired.
\end{proof}
\begin{remark}
    We will omit "up to equivalence" in the following.
\end{remark}
\begin{definition}\label{def:devissage}
    We say the exact functor $\mathcal{F}$ satisfies the \textit{d\'evissage condition} if the equivalent descriptions in the Proposition \ref{prop:devissagedefine} hold true. 
\end{definition}
\begin{example}
    Let $\mathcal{C}$ be a stable $\infty$-category. Then $\mathrm{id}_\mathcal{C}:\mathcal{C}\to\mathcal{C}$ and $0_\mathcal{C}:\mathcal{C}\to *$ satisfy the d\'evissage condition.
\end{example}
\begin{example}
    A fully faithful functor satisfies the d\'evissage condition if and only if it is an equivalence.
\end{example}
Denote by $\mathrm{E}(\mathcal{F})$ the full subcategory of $\mathcal{C}$ consisting of objects $X$ such that $0\to X$ admits a factorization as above,i.e. the extension closure of objects in $\mathcal{F}(\mathcal{A})$. By \cite[Lemma 1.1.3.3]{lurie_2017}, $\mathrm{E}(\mathcal{F})$ is stable full subcategory of $\mathcal{C}$. We can naturally get the following results.
\begin{prop}\label{prop:devissage}
    $\mathcal{F}$ satisfies the d\'evissage condition if and only if $\mathrm{E}(\mathcal{F})=\mathcal{C}$.
\end{prop}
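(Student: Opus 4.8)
The plan is to observe that this Proposition is essentially a reformulation of condition (2) of Proposition \ref{prop:devissagedefine}, so the proof reduces to unwinding the definition of $\mathrm{E}(\mathcal{F})$ together with the elementary fact that a full subcategory is determined by its class of objects.

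First I would record the key elementary point: since $\mathrm{E}(\mathcal{F})$ is by construction a \emph{full} subcategory of $\mathcal{C}$, we have $\mathrm{E}(\mathcal{F}) = \mathcal{C}$ if and only if every object of $\mathcal{C}$ belongs to $\mathrm{E}(\mathcal{F})$ (no morphism or higher-cell data can be lost, precisely because $\mathrm{E}(\mathcal{F})$ is full). By the very definition of $\mathrm{E}(\mathcal{F})$, an object $X\in\mathcal{C}$ lies in $\mathrm{E}(\mathcal{F})$ exactly when the morphism $0\to X$ admits a factorization $0 = X_0 \to X_1 \to \cdots \to X_n = X$ in $\mathcal{C}$ with $\mathrm{cofib}(X_i\to X_{i+1})\in\mathcal{F}(\mathcal{A})$ for every $i$. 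Hence the statement ``$\mathrm{E}(\mathcal{F}) = \mathcal{C}$'' is logically equivalent to the assertion that every morphism of the form $0\to Y$ in $\mathcal{C}$ admits such a factorization, which is verbatim condition (2) of Proposition \ref{prop:devissagedefine}.

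Next I would invoke Definition \ref{def:devissage}: by construction $\mathcal{F}$ satisfies the d\'evissage condition if and only if the equivalent conditions (1)--(3) of Proposition \ref{prop:devissagedefine} hold, in particular if and only if condition (2) holds. Combining this with the previous paragraph gives both implications at once. If $\mathcal{F}$ satisfies the d\'evissage condition, then condition (2) holds, so every object of $\mathcal{C}$ lies in $\mathrm{E}(\mathcal{F})$ and therefore $\mathrm{E}(\mathcal{F}) = \mathcal{C}$; conversely, if $\mathrm{E}(\mathcal{F}) = \mathcal{C}$, then condition (2) holds, so $\mathcal{F}$ satisfies the d\'evissage condition by Definition \ref{def:devissage}.

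I do not expect a genuine obstacle: all the substantive work, namely the equivalence of (1), (2) and (3), is already done in Proposition \ref{prop:devissagedefine}, and what remains is purely formal. The only point to state with a little care is the passage between ``$\mathrm{Ob}(\mathrm{E}(\mathcal{F})) = \mathrm{Ob}(\mathcal{C})$'' and ``$\mathrm{E}(\mathcal{F}) = \mathcal{C}$'' as $\infty$-categories, which is legitimate by fullness.
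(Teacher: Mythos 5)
Your proposal is correct and matches the paper's (implicit) argument: the paper gives no proof, treating the statement as an immediate consequence of the definition of $\mathrm{E}(\mathcal{F})$ and condition (2) of Proposition \ref{prop:devissagedefine}, which is exactly the unwinding you carry out. Your extra remark about fullness justifying the passage from equality of object classes to equality of $\infty$-categories is a reasonable point of care, not a different route.
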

\begin{remark}
    Ofcourse, $\mathcal{A}\to\mathrm{E}(\mathcal{F})$ satisfies the d\'evissage condition.
\end{remark}
When we assume $\mathcal{C}$ and $\mathcal{A}$ are idempotent-complete, we only need the weaker condition in some cases.
\begin{definition}\label{def:weakdevissage}
     Assume $\mathcal{C}$ and $\mathcal{A}$ are idempotent-complete. We say the $\mathcal{F}$ satisfies the \textit{weak d\'evissage condition} if $\mathrm{Idem}(\mathrm{E}(\mathcal{F}))=\mathcal{C}$.
\end{definition}
\begin{remark}
    $\mathrm{Idem}(\mathrm{E}(\mathcal{F}))$ is just the thick closure of $\mathrm{Ob}(\mathcal{F}(\mathcal{A}))$ in $\mathcal{C}$, i.e. the smallest stable full subcategory of $\mathcal{C}$ containing $\mathcal{F}(\mathcal{A})$ which is closed under extensions and direct factors.
\end{remark}
There are exact “evaluation” functors:
 \[
 \mathrm{ev}_{n,\mathcal{A}}:\mathrm{Gap}([n],\mathcal{F})\to\mathcal{C},(A_\bullet,X_{\bullet,\bullet})\mapsto X_{0,n}.
 \]
These are compatible with stabilization along the inclusion functors $i_{n,\mathcal{F}}$, so there is also an induced exact functor:
\[
  \mathrm{ev}_\mathcal{A}:\mathrm{Gap}(\mathcal{F})\to\mathcal{C}.
\]
\begin{prop}\label{prop:fullyfaithfulofev}
    The functor $\mathrm{ev}_\mathcal{F}:\mathrm{Gap}(\mathcal{F})\to\mathcal{C}$ induces a well-defined functor
    \[
    \widehat{\mathrm{ev}}_\mathcal{F}:\mathrm{Gap}(\mathcal{F})/\mathrm{Gap}^w(\mathcal{F})\to\mathrm{E}(\mathcal{F}),
    \]
    which is an equivalence.
\end{prop}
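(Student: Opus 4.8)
My plan is to check three things in order: that $\mathrm{ev}_\mathcal{F}$ descends to the Verdier quotient and takes values in $\mathrm{E}(\mathcal{F})$; that the resulting functor $\widehat{\mathrm{ev}}_\mathcal{F}$ is essentially surjective; and that it is fully faithful, which is where the work lies. For the first point, an object $(A_\bullet,X_{\bullet,\bullet})$ of $\mathrm{Gap}([n],\mathcal{F})$ has $X_{0,n}$ built by iterated cofibres from $X_{0,1}=\mathcal{F}(A_1),\dots,X_{n-1,n}=\mathcal{F}(A_n)$, so $X_{0,n}\in\mathrm{E}(\mathcal{F})$ since $\mathrm{E}(\mathcal{F})$ is a stable subcategory of $\mathcal{C}$ closed under extensions (\cite[Lemma 1.1.3.3]{lurie_2017}), while objects of $\mathrm{Gap}^w(\mathcal{F})$ have $X_{0,n}\simeq 0$; hence $\mathrm{ev}_\mathcal{F}$ annihilates $\mathrm{Gap}^w(\mathcal{F})$ and the universal property of the Verdier quotient produces $\widehat{\mathrm{ev}}_\mathcal{F}$. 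Essential surjectivity is immediate from the description of $\mathrm{E}(\mathcal{F})$: any $X\in\mathrm{E}(\mathcal{F})$ has a finite filtration with subquotients in $\mathcal{F}(\mathcal{A})$, and choosing lifts of these subquotients to $\mathcal{A}$ gives an object of some $\mathrm{Gap}([n],\mathcal{F})$ with total object $X$.

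For full faithfulness I would reduce to a statement about generators. Let $\iota\colon\mathcal{A}=\mathrm{Gap}([1],\mathcal{F})\hookrightarrow\mathrm{Gap}(\mathcal{F})\to\mathrm{Gap}(\mathcal{F})/\mathrm{Gap}^w(\mathcal{F})$ be the composite; from the pullback description of $\mathrm{Gap}([n],\mathcal{F})$ one checks that $\mathcal{A}\to\mathrm{Gap}(\mathcal{F})$ is fully faithful and that $\widehat{\mathrm{ev}}_\mathcal{F}\circ\iota\simeq\mathcal{F}$. Each $X_{\bullet,\bullet}\in\mathrm{Gap}([n],\mathcal{F})$ carries a canonical filtration by sub-gap-objects — under $\mathrm{Gap}([n],\mathcal{C})\simeq\mathrm{Fun}(\Delta^{n-1},\mathcal{C})$ the $k$-th term is obtained by keeping $X_{0,\bullet}$ through the $k$-th vertex and extending constantly afterwards — whose successive cofibres are the gap objects concentrated in a single subquotient $A_k$; each such object maps to $\iota(A_k)$ (the object ``$A_k$ in the first slot'') by a morphism that is an equivalence on $\mathrm{ev}_\mathcal{F}$, hence becomes an equivalence in the quotient. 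Thus after localising the slot occupied by a subquotient is irrelevant, and $\mathrm{Gap}(\mathcal{F})/\mathrm{Gap}^w(\mathcal{F})$ is generated under shifts and extensions by the image of $\iota$, exactly as $\mathrm{E}(\mathcal{F})$ is generated by $\mathcal{F}(\mathcal{A})$. Now for any exact functor $G$ of stable $\infty$-categories whose source is generated under shifts and extensions by a class $S$ and whose target is generated by $G(S)$, full faithfulness follows from the case of pairs in $S$: for fixed $s\in S$ the objects $y$ on which $\mathrm{Map}(s,y)\to\mathrm{Map}(Gs,Gy)$ is an equivalence form a stable full subcategory, so they are everything once they contain $S$, and then the same in the first variable; moreover essential surjectivity is then automatic, since the essential image of a fully faithful exact functor is a stable full subcategory. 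Applied to $G=\widehat{\mathrm{ev}}_\mathcal{F}$, $S=\iota(\mathcal{A})$, this reduces the whole proposition to showing that
\[
\widehat{\mathrm{ev}}_\mathcal{F}\colon\ \mathrm{Map}_{\mathrm{Gap}(\mathcal{F})/\mathrm{Gap}^w(\mathcal{F})}(\iota A,\iota A')\ \longrightarrow\ \mathrm{Map}_{\mathcal{C}}(\mathcal{F} A,\mathcal{F} A')
\]
is an equivalence for all $A,A'\in\mathcal{A}$.

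That last identification is the heart of the matter, and I would prove it via the calculus of fractions in the Verdier quotient, writing the left-hand side as $\operatorname{colim}\mathrm{Map}_{\mathrm{Gap}(\mathcal{F})}(\iota A,w)$ over morphisms $\iota A'\to w$ whose fibre lies in $\mathrm{Gap}^w(\mathcal{F})$ — a filtered colimit to which $\mathrm{ev}_\mathcal{F}$ maps. Given $g\colon\mathcal{F} A\to\mathcal{F} A'$, one chooses a lifted filtration $0=C_0\to\cdots\to C_p=\mathrm{cofib}(g)$ (legitimate since $\mathrm{cofib}(g)\in\mathrm{E}(\mathcal{F})$) and pulls it back along $\mathcal{F} A'\to\mathrm{cofib}(g)$; by Lemma~\ref{lem:diagram3} and Lemma~\ref{lem:cofiberofpullback} the pulled-back filtration of $\mathcal{F} A'$ still has subquotients in $\mathcal{F}(\mathcal{A})$, so with lifts it is a gap object $w$ on which $g$ is realised by a map from $\iota A$; arranging the remaining leg $\iota A'\to w$ (via a further lifted filtration and slot-independence) exhibits $g$ in the colimit, and running over all suspensions gives surjectivity on homotopy groups. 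Injectivity is obtained in the same manner, using the Ore condition in the quotient and a lifted filtration of an appropriate fibre to identify two fractions with the same image in $\mathrm{Map}_\mathcal{C}(\mathcal{F} A,\mathcal{F} A')$. With this in hand, the reduction above shows $\widehat{\mathrm{ev}}_\mathcal{F}$ is fully faithful, and since it is essentially surjective it is an equivalence.

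I expect this mapping-space computation to be the main obstacle. The difficulty is that $g\colon\mathcal{F} A\to\mathcal{F} A'$ respects no chosen filtrations and that two filtrations of the same object of $\mathrm{E}(\mathcal{F})$ need not have a common refinement whose subquotients lie literally in $\mathcal{F}(\mathcal{A})$; so both the fractions and the homotopies between them have to be built after localising, which is precisely where slot-independence, the definition of $\mathrm{E}(\mathcal{F})$, and the stability Lemmas~\ref{lem:cofiberofpullback} and~\ref{lem:diagram3} get used in an essential way.
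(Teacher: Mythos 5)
Your route is genuinely different from the paper's. The paper reduces, via the Blumberg--Gepner--Tabuada criterion, to showing that $\mathrm{Ho}(\mathrm{Gap}(\mathcal{F}))/\mathrm{Ho}(\mathrm{Gap}^w(\mathcal{F}))\to\mathrm{Ho}(\mathrm{E}(\mathcal{F}))$ is an equivalence, and then checks essential surjectivity and bijectivity on hom-sets for \emph{arbitrary} gap objects by three explicit zig-zag constructions: extending a gap object by a chosen factorization of a morphism out of its evaluation (using Proposition~\ref{prop:devissagedefine}), identifying two gap objects with equivalent evaluations through a common shifted object, and identifying two morphisms with homotopic evaluations similarly. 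Your reductions are sound and arguably cleaner: $\mathrm{ev}_\mathcal{F}$ does kill $\mathrm{Gap}^w(\mathcal{F})$ and land in $\mathrm{E}(\mathcal{F})$; your ``slot-independence'' maps $\mathfrak{p}_{k,\mathcal{F}}(A)\to\mathfrak{p}_{1,\mathcal{F}}(A)=\iota A$ are legitimate morphisms of $\mathrm{Gap}(\mathcal{F})$ (all induced maps on subquotients are zero, hence lift to $\mathcal{A}$) and are $\mathrm{ev}$-equivalences, so together with the filtration of Lemma~\ref{lem:SOD}/Proposition~\ref{prop:SOD} the quotient is generated under extensions by $\iota(\mathcal{A})$; and the two-variable stable-subcategory argument plus suspensions correctly reduces everything to $\pi_0$-bijectivity of $\mathrm{Map}(\iota A,\iota A')\to\mathrm{Map}(\mathcal{F}A,\mathcal{F}A')$. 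What this buys is that only one mapping-space statement on generators remains, instead of statements about all objects.

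That remaining statement is exactly where the paper does its real work (its Steps 1--3), and your sketch of it has two concrete problems. First, the leg $\iota A'\to w$ of your fraction does not exist as written: $w$ has $\mathcal{F}A$ in its first slot and $\iota A'$ has $\mathcal{F}A'$ there, so a morphism $\iota A'\to w$ would require a map $\mathcal{F}A'\to\mathcal{F}A$ in the first coordinate. One must route through a shifted object such as $z=(0\to\cdots\to 0\to\mathcal{F}A')$, which maps by $\mathrm{ev}$-equivalences \emph{to} both $\iota A'$ and $w$ --- but that zig-zag has the wrong variance for the covariant colimit $\operatorname{colim}_{\iota A'\to w'}\mathrm{Map}(\iota A,w')$ you set up. You should either switch to the source-replacement formula $\operatorname{colim}_{(z\to\iota A)}\mathrm{Map}(z,\iota A')$, where the shift trick realizes any $g$ immediately (the map $(0\to\cdots\to0\to\mathcal{F}A)\to\iota A'$ given by $g$ in the last slot and zero elsewhere has all subquotient components zero, hence lifts; no factorization of $g$ is even needed for surjectivity), or work in $\mathrm{Ho}$ as the paper does. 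Second, injectivity on $\pi_0$ --- that two fractions with the same evaluation agree in the colimit --- is asserted ``in the same manner'' but never constructed; this is the content of the paper's Step~3 and is not a formal consequence of the surjectivity argument. Neither issue is fatal, but both must be filled in before the proposal is a proof.
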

\begin{proof}
    By \cite[Corollary 5.11, Proposition 5.15]{blumberg_gepner_tabuada_2013}, we only need to prove
    \[
    \mathrm{Ho}(\mathrm{Gap}(\mathcal{F}))/\mathrm{Ho}(\mathrm{Gap}^w(\mathcal{F}))\to\mathrm{Ho}(\mathrm{E}(\mathcal{F}))
    \]
    is an equivalence of homotopy categories. We prove it by steps.
    \begin{enumerate}[label=Step \arabic*:]
        \item \emph{For every morphism $\mathrm{ev}_\mathcal{F}(A_\bullet,X_{\bullet,\bullet})\to Y$ in $\mathrm{E}(\mathcal{F})$, there is a morphism $(A_\bullet,X_{\bullet,\bullet})\to (A'_\bullet,X'_{\bullet,\bullet})$ such that $\mathrm{ev}_\mathcal{F}((A_\bullet,X_{\bullet,\bullet})\to (A'_\bullet,X'_{\bullet,\bullet}))\simeq\mathrm{ev}(A'_\bullet,X_{\bullet,\bullet})\to Y$}.

        Assume $(A_\bullet,X_{\bullet,\bullet})\in\mathrm{Gap}([n],\mathcal{F})$. Since $\mathcal{A}\to \mathrm{E}(\mathcal{F})$ satisfies the d\'evissage condition, there is a factorization
        \[
        \mathrm{ev}_\mathcal{F}(A_\bullet,X_{\bullet,\bullet})=Y_0\to Y_1\to\cdots\to Y_m=Y
        \]
        such that $\operatorname{cofib}(Y_i\to Y_{i+1})\in\mathcal{F}(\mathcal{A})$. Assume $B_1,\cdots, B_m\in\mathcal{A}$ such that $\mathcal{F}(B_i)=\operatorname{cofib}(Y_{i-1}\to Y_{i})$. Then $(A_{\bullet},X_{\bullet,\bullet})\to (A'_{\bullet},X'_{\bullet,\bullet})$ is defined as follows
        \[
        \begin{tikzcd}
            X_{\bullet,\bullet}:&0\ar[r]\ar[d]& X_{0,1}\ar[r]\ar[d]&\cdots\ar[r]&X_{0,n}\ar[r]\ar[d]&X_{0,n}\ar[r]\ar[d]&\cdots\ar[r]&X_{0,n}\ar[d]\\
            X'_{\bullet,\bullet}:&0\ar[r]&X_{0,1}\ar[r]&\cdots\ar[r]&X_{0,n}\ar[r]&Y_1\ar[r]&\cdots\ar[r]&Y_n\\
            A_\bullet:&A_1\ar[d,"\mathrm{i d}"]&\cdots&A_n\ar[d,"\mathrm{id}"]&0\ar[d,"0"]&\cdots&0\ar[d,"0"]\\
            A'_{\bullet}:&A_1&\cdots&A_n&B_1&\cdots&B_m\\
        \end{tikzcd}
        \]
        It is well-defined since $X_{0,n}=Y_0$.
        \item \emph{If $\mathrm{ev}_\mathcal{F}(A_\bullet,X_{\bullet,\bullet})\simeq\mathrm{ev}_\mathcal{F}(B_\bullet,Y_{\bullet,\bullet})$, $(A_\bullet,X_{\bullet,\bullet})\simeq (B_\bullet,Y_{\bullet,\bullet})$ in $\mathrm{Gap}(\mathcal{F})/\mathrm{Gap}^w(\mathcal{F})$.}

        Assume $(A_\bullet,X_{\bullet,\bullet}),(B_\bullet,Y_{\bullet,\bullet})\in \mathrm{Gap}([n],\mathcal{F})$. Then define an object $(C_\bullet,Z_{\bullet,\bullet})$ and two morphisms $(C_\bullet,Z_{\bullet,\bullet})\to (A_\bullet,X_{\bullet,\bullet})$ and $(C_\bullet,Z_{\bullet,\bullet})\to (B_\bullet,Y_{\bullet,\bullet})$ as follows.
        \[
        \begin{tikzcd}
            X_{\bullet,\bullet}:&0\ar[r]&\cdots\ar[r]&X_{0,n}\ar[r]&X_{0,n}\ar[r]&\cdots\ar[r]&X_{0,n}\\
            Z_{\bullet,\bullet}:&0\ar[r]\ar[d]\ar[u]&\cdots\ar[r]&0\ar[r]\ar[d]\ar[u]&X_{0,1}\ar[r]\ar[d]\ar[u]&\cdots\ar[r]&X_{0,n}\ar[d]\ar[u]\\
            Y_{\bullet,\bullet}:&0\ar[r]&\cdots\ar[r]&Y_{0,n}\ar[r]&Y_{0,n}\ar[r]&\cdots\ar[r]&Y_{0,n}\\
            A_\bullet:&A_1&\cdots&A_n&0&\cdots&0\\
            C_\bullet:&0\ar[d]\ar[u]&\cdots&0\ar[u]\ar[d]&A_1\ar[u]\ar[d]&\cdots&A_n\ar[u]\ar[d]\\
            B_\bullet:&B_1&\cdots&B_n&0&\cdots&0
        \end{tikzcd}
        \]
        There are well-defined since $X_{0,n}\simeq Y_{0,n}$. One can checks that there are equivalences in $\mathrm{Gap}(\mathcal{F})/\mathrm{Gap}^w(\mathcal{F})$.
        \item \emph{For two morphisms $(f_\bullet,F_{\bullet,\bullet}),(g_\bullet,G_{\bullet,\bullet}):(A_\bullet,X_{\bullet,\bullet})\to (B_\bullet,Y_{\bullet,\bullet})$, if $\mathrm{ev}_\mathcal{F}(f_\bullet,F_{\bullet,\bullet})\simeq\mathrm{ev}_\mathcal{F}(g_\bullet,G_{\bullet,\bullet})$, then $(f_\bullet,F_{\bullet,\bullet})\simeq (g_\bullet,G_{\bullet,\bullet})$ in $\mathrm{Gap}(\mathcal{F})/\mathrm{Gap}^w(\mathcal{F})$.}

        Assume $(f_\bullet,F_{\bullet,\bullet}), (g_\bullet,G_{\bullet,\bullet})\in\mathrm{Gap}([n],\mathcal{F})$. Consider the diagram
        \[
        \begin{tikzcd}[row sep=tiny,column sep=tiny]
             X'_{\bullet,\bullet}:&&0\ar[rr]\ar[dd]\ar[dl]&&\cdots\ar[rr]&&0\ar[rr]\ar[dd]\ar[dl]&&X_{0,1}\ar[rr]\ar[dd]\ar[dl,"F_{0,1}"']&&\cdots\ar[rr]&&X_{0,n}\ar[dd]\ar[dl,"F_{0,n}"']\\
             Y'_{\bullet,\bullet}:&0\ar[rr,crossing over]&&\cdots\ar[rr,crossing over]&&0\ar[rr,crossing over]&&Y_{0,1}\ar[rr,crossing over]&&\cdots\ar[rr,crossing over]&&Y_{0,n}&\\
             X_{\bullet,\bullet}:&&0\ar[rr]\ar[dl]&&\cdots\ar[rr]&&X_{0,n}\ar[dl]\ar[rr]&&X_{0,n}\ar[dl]\ar[rr]&&\cdots\ar[rr]&&X_{0,n}\ar[dl]\\
            Y_{\bullet,\bullet}:&0\ar[rr,crossing over]\ar[from=uu,crossing over]&&\cdots\ar[rr,crossing over]&&Y_{0,n}\ar[rr,crossing over]\ar[from=uu,crossing over]&&Y_{0,n}\ar[rr,crossing over]\ar[from=uu,crossing over]&&\cdots\ar[rr,crossing over]&&Y_{0,n}\ar[from=uu,crossing over]&\\
            A'_\bullet:&&0\ar[dl]\ar[dd]&&\cdots&&0\ar[dl]\ar[dd]&&A_1\ar[dl,"f_1"']\ar[dd]&&\cdots&&A_n\ar[dl,"f_n"']\ar[dd]\\
            B'_\bullet:&0\ar[dd]&&\cdots&&0\ar[dd]&&B_1\ar[dd]&&\cdots&&B_n\ar[dd]&\\
            A_\bullet:&&A_1\ar[dl]&&\cdots&&A_n\ar[dl]&&0\ar[dl]&&\cdots&&0\ar[dl]\\
            B_\bullet:&B_1&&\cdots&&B_n&&0&&\cdots&&0&
        \end{tikzcd}
        \]
        Here $(A'_\bullet,X'_{\bullet,\bullet})\to (A_\bullet,X_{\bullet,\bullet})$ and $(B'_\bullet,Y'_{\bullet,\bullet})\to (B_\bullet,Y_{\bullet,\bullet})$ are just identity in $\mathrm{Gap}(\mathcal{F})/\mathrm{Gap}^w(\mathcal{F})$ and $(A_\bullet,X_{\bullet,\bullet})\to (B_\bullet,Y_{\bullet,\bullet})$ can be either $(f_\bullet,F_{\bullet,\bullet})$ or $(g_\bullet,G_{\bullet,\bullet})$. Hence, $(f_\bullet,F_{\bullet,\bullet}),(g_\bullet,G_{\bullet,\bullet})$ are both equivalent to $(A'_\bullet,X'_{\bullet,\bullet})\to (B'_\bullet,Y'_{\bullet,\bullet})$ in $\mathrm{Gap}(\mathcal{F})/\mathrm{Gap}^w(\mathcal{F})$.
        \item \emph{$\mathrm{Ho}(\widehat{\mathrm{ev}}_\mathcal{F}):\mathrm{Ho}(\mathrm{Gap}(\mathcal{F}))/\mathrm{Ho}(\mathrm{Gap}^w(\mathcal{F}))\to\mathrm{Ho}(\mathrm{E}(\mathcal{F}))$ is an equivalence}.

        Step 1 tells us it is essentially surjective. Step 1 and Step 2 tells us it is surjective on morphisms. Step 3 tells us it is injective on morphisms. Hence, it is both essentially surjective and fully faithful.
    \end{enumerate}
\end{proof}
\begin{cor}\label{cor:fullyfaithfulofev}
    $\mathcal{F}$ satisfies the d\'evissage condition if and only if $\mathrm{ev}_\mathcal{F}$ is essentially surjective.
\end{cor}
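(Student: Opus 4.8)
\section*{Proof proposal for Corollary~\ref{cor:fullyfaithfulofev}}

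The plan is to reduce the statement to the description of the essential image of $\mathrm{ev}_\mathcal{F}$, and then to quote Proposition~\ref{prop:devissage}. Concretely, I will show that the essential image of $\mathrm{ev}_\mathcal{F}:\mathrm{Gap}(\mathcal{F})\to\mathcal{C}$ is exactly the replete full subcategory $\mathrm{E}(\mathcal{F})\subseteq\mathcal{C}$. Granting this, $\mathrm{ev}_\mathcal{F}$ is essentially surjective if and only if $\mathrm{E}(\mathcal{F})=\mathcal{C}$, and by Proposition~\ref{prop:devissage} the latter is equivalent to $\mathcal{F}$ satisfying the d\'evissage condition; this is precisely the assertion of the corollary.

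There are two routes to the identification of the essential image, and I would present the second (self‑contained) one, mentioning the first as a shortcut. First route: $\mathrm{ev}_\mathcal{F}$ factors as
\[
\mathrm{Gap}(\mathcal{F}) \xrightarrow{\ q\ } \mathrm{Gap}(\mathcal{F})/\mathrm{Gap}^w(\mathcal{F}) \xrightarrow{\ \widehat{\mathrm{ev}}_\mathcal{F}\ } \mathrm{E}(\mathcal{F}) \hookrightarrow \mathcal{C},
\]
where $q$ is the quotient functor; since a Verdier quotient (before idempotent completion) is bijective on objects, $q$ is essentially surjective, and $\widehat{\mathrm{ev}}_\mathcal{F}$ is an equivalence by Proposition~\ref{prop:fullyfaithfulofev}, so the essential image of $\mathrm{ev}_\mathcal{F}$ is $\mathrm{E}(\mathcal{F})$. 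Second route (avoiding Proposition~\ref{prop:fullyfaithfulofev}): given $(A_\bullet,X_{\bullet,\bullet})\in\mathrm{Gap}([n],\mathcal{F})$, the pushout square built into the gap condition for the triple $0<i-1<i$ together with $X_{i-1,i-1}\simeq 0$ yields $\mathrm{cofib}(X_{0,i-1}\to X_{0,i})\simeq X_{i-1,i}=\mathcal{F}(A_i)\in\mathcal{F}(\mathcal{A})$, so the chain $0=X_{0,0}\to X_{0,1}\to\cdots\to X_{0,n}=\mathrm{ev}_\mathcal{F}(A_\bullet,X_{\bullet,\bullet})$ exhibits $\mathrm{ev}_\mathcal{F}(A_\bullet,X_{\bullet,\bullet})$ as an iterated extension of objects of $\mathcal{F}(\mathcal{A})$, hence as an object of $\mathrm{E}(\mathcal{F})$. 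Conversely, for $Y\in\mathrm{E}(\mathcal{F})$ pick a factorization $0=Y_0\to Y_1\to\cdots\to Y_n=Y$ with $\mathrm{cofib}(Y_{i-1}\to Y_i)=\mathcal{F}(A_i)$; transporting the diagram $Y_\bullet$ through the equivalence $\mathrm{Gap}([n],\mathcal{C})\simeq\mathrm{Fun}(\Delta^{n-1},\mathcal{C})$ produces a gap object $X_{\bullet,\bullet}$ with $X_{0,\bullet}\simeq Y_\bullet$, whose short cofibers are $X_{i-1,i}\simeq\mathrm{cofib}(Y_{i-1}\to Y_i)=\mathcal{F}(A_i)$, so $(A_\bullet,X_{\bullet,\bullet})\in\mathrm{Gap}([n],\mathcal{F})$ and $\mathrm{ev}_\mathcal{F}(A_\bullet,X_{\bullet,\bullet})=X_{0,n}\simeq Y$.

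I do not expect a substantial obstacle here: once the essential image of $\mathrm{ev}_\mathcal{F}$ is pinned down, the corollary is a formal consequence of Proposition~\ref{prop:devissage}, and all the genuine work has already been carried out in Proposition~\ref{prop:fullyfaithfulofev}. The only point requiring a line of care is the equivalence $\mathrm{cofib}(X_{0,i-1}\to X_{0,i})\simeq X_{i-1,i}$, which is immediate from the pushout squares in the definition of $\mathrm{Gap}([n],\mathcal{C})$ and the vanishing of the diagonal entries; one should also note that $\mathrm{E}(\mathcal{F})$ is closed under equivalence, so that ``essentially surjective onto $\mathcal{C}$'' is genuinely equivalent to $\mathrm{E}(\mathcal{F})=\mathcal{C}$.
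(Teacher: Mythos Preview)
Your proposal is correct. The paper states this result as an immediate corollary of Proposition~\ref{prop:fullyfaithfulofev} and Proposition~\ref{prop:devissage}, with no further argument; this is exactly your ``first route'' via the factorization through the Verdier quotient. Your ``second route''---directly reading off the essential image of $\mathrm{ev}_\mathcal{F}$ from the filtration $0=X_{0,0}\to X_{0,1}\to\cdots\to X_{0,n}$ built into a gap object---is a legitimate and more elementary alternative that bypasses the full force of Proposition~\ref{prop:fullyfaithfulofev}; it shows that for this particular corollary one does not actually need to know that $\widehat{\mathrm{ev}}_\mathcal{F}$ is an equivalence, only that $\mathrm{ev}_\mathcal{F}$ lands in and surjects onto $\mathrm{E}(\mathcal{F})$. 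The paper's approach is shorter given what has already been proved, while yours is self-contained and makes the elementary nature of the statement transparent.
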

For a small stable $\infty$-categories $\mathcal{C}$, its connective $K^\mathrm{cn}_0$-group is the Grothendieck group of $\mathrm{Ho}(\mathcal{C})$:
\[
K^\mathrm{cn}_0(\mathcal{C})=\frac{\text{Free abelian group on isomorphism classes in } \mathrm{Ho}(\mathcal{C})}{\langle [X]-[Y]+[Z] : \text{cofiber sequence }X\to Y\to Z \rangle}
\]
Its non-connective $K_0$-group is $K_0(\mathcal{C})=K^\mathrm{cn}(\mathrm{Idem}(\mathcal{C}))$.
\begin{lemma}\label{lem:devissageK_0}
    Let $\mathcal{F}:\mathcal{A}\to\mathcal{C}\in\mathrm{Cat}^{\mathrm{perf}}$ be an exact functor between small stable idempotent-complete $\infty$-categories, which satisfies the weak d\'evissage condition. Then the following statements are equivalent:
    \begin{enumerate}[label=(\arabic*)]
        \item $\mathcal{F}$ satisfies the d\'evissage condition.
        \item $K_0(\mathcal{F}):K_0(\mathcal{A})\to K_0(\mathcal{C})$ is an epimorphism.
    \end{enumerate}
\end{lemma}
\begin{proof}
    $(1)\Rightarrow (2)$ isobvious. Now assume $\mathcal{F}$ satisfies the weak d\'evissage condition and $K_0(\mathcal{F}):K_0(\mathcal{A})\to K_0(\mathcal{C})$ is an epimorphism. Notice that $K_0(\mathcal{F})$ is just $K_0^\mathrm{cn}(\mathcal{F})$, which can be factored as
    \[
    K_0^\mathrm{cn}(\mathcal{A})\to K_0^\mathrm{cn}(\mathrm{E}(\mathcal{F}))\to K_0^\mathrm{cn}(\mathcal{C})=K_0^\mathrm{cn}(\mathrm{Idem}(\mathrm{E}(\mathcal{F}))).
    \]
    Hence, $K_0^\mathrm{cn}(\mathrm{E}(\mathcal{F}))\to K_0^\mathrm{cn}(\mathcal{C})$ must be an epimorphism. Since $\mathrm{E}(\mathcal{F})$ is a dense subcategory of $\mathcal{C}$, by \cite[Theorem 2.1]{THOMASON_1997}, $K_0^\mathrm{cn}(\mathrm{E}(\mathcal{F}))\to K_0^\mathrm{cn}(\mathcal{C})$ is always a monomorphism and is an epimorphism only if $\mathrm{E}(\mathcal{F})=\mathcal{C}$. Hence, by Proposition \ref{prop:devissage}, $\mathcal{F}$ satisfies the d\'evissage condition.
\end{proof}
\begin{definition}
    Let $\mathcal{F}:\mathcal{A}\to\mathcal{C}$ be an $\mathcal{E}$-linear functor. We say $\mathcal{F}$ satisfies the \emph{(weak) d\'evissage condition} if its image under the forgetful functor $\mathrm{Cat}^\mathrm{perf}(\mathcal{E}) \to \mathrm{Cat}^\mathrm{perf}$ satisfies the (weak) d\'evissage condition.
\end{definition}
\section{Theorems for invariants}
In this section, we always assume $\mathcal{E}\in\mathrm{CAlg}^\mathrm{rig}(\mathrm{Cat}^\mathrm{perf})$.
\subsection{Additivity theorem}
Assume $\mathcal{C}\in\mathrm{Cat}^{\mathrm{perf}}(\mathcal{E})$ and $\mathcal{F}_\mathcal{A}:\mathcal{A}\to\mathcal{C}, \mathcal{F}_\mathcal{B}:\mathcal{B}\to\mathcal{C}$ are $\mathcal{E}$-linear functors. Define a category $\mathcal{S}(\mathcal{F}_\mathcal{A},\mathcal{F}_\mathcal{B})$ by pullback
\[
\begin{tikzcd}
    \mathcal{S}(\mathcal{F}_\mathcal{A},\mathcal{F}_\mathcal{B})\ar[r]\ar[d]&\mathrm{Gap}([2],\mathcal{C})\ar[d]\\
    \mathcal{A}\times\mathcal{B}\ar[r,"{(\mathcal{F}_\mathcal{A},\mathcal{F}_\mathcal{B})}"]&\mathcal{C}^2
\end{tikzcd}
\]
Where $\mathrm{Gap}([2],\mathcal{C}) \longrightarrow \mathcal{C}$ is defined by $X_{\bullet,\bullet} \mapsto \bigl(X_{0,1}, X_{1,2}\bigr)$. Then $\mathcal{S}(\mathcal{F}_\mathcal{A},\mathcal{F}_\mathcal{B})$ is also in $\mathrm{Cat}^\mathrm{perf}(\mathcal{E})$.
\begin{thm}[$\infty$-categorical Additivity Theorem]\label{thm:additivity}
\quad\\
    Assume $\mathcal{C}\in\mathrm{Cat}^{\mathrm{perf}}(\mathcal{E})$ and $\mathcal{F}_\mathcal{A}:\mathcal{A}\to\mathcal{C}, \mathcal{F}_\mathcal{B}:\mathcal{B}\to\mathcal{C}$ are $\mathcal{E}$-linear functors. The natural projection $ \mathcal{S}(\mathcal{F}_\mathcal{A},\mathcal{F}_\mathcal{B})\to\mathcal{A}\times\mathcal{B}$
    induces a homotopy equivalence $\mathcal{U}_{\mathrm{add}}(\mathcal{S}(\mathcal{F}_\mathcal{A},\mathcal{F}_\mathcal{B}))\xrightarrow{\simeq} \mathcal{U}_{\mathrm{add}}(\mathcal{A})\times\mathcal{U}_{\mathrm{add}}(\mathcal{B})$.
\end{thm}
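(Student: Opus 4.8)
The plan is to realize
\[
\mathcal{A}\xrightarrow{f}\mathcal{S}(\mathcal{F}_\mathcal{A},\mathcal{F}_\mathcal{B})\xrightarrow{g}\mathcal{B}
\]
as a split-exact sequence of small stable idempotent-complete $\infty$-categories and then invoke the additive-invariant property of $\mathcal{U}_{\mathrm{add}}$. Using the equivalence $\mathrm{Gap}([2],\mathcal{C})\simeq\mathrm{Fun}(\Delta^1,\mathcal{C})$, an object of $\mathcal{S}:=\mathcal{S}(\mathcal{F}_\mathcal{A},\mathcal{F}_\mathcal{B})$ is a triple $(A,B,Y)$ with $A\in\mathcal{A}$, $B\in\mathcal{B}$ and a cofiber sequence $\mathcal{F}_\mathcal{A}(A)\to Y\to\mathcal{F}_\mathcal{B}(B)$ in $\mathcal{C}$. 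I would introduce four exact functors: $f:\mathcal{A}\to\mathcal{S}$, $A\mapsto(A,0,\mathcal{F}_\mathcal{A}(A)\xrightarrow{\mathrm{id}}\mathcal{F}_\mathcal{A}(A)\to 0)$; $g:\mathcal{S}\to\mathcal{B}$, $(A,B,Y)\mapsto B$; $i:\mathcal{S}\to\mathcal{A}$, $(A,B,Y)\mapsto A$; and $j:\mathcal{B}\to\mathcal{S}$, $B\mapsto(0,B,0\to\mathcal{F}_\mathcal{B}(B)\xrightarrow{\mathrm{id}}\mathcal{F}_\mathcal{B}(B))$. Each is exact since finite limits and colimits in the pullback $\mathcal{S}$ are computed componentwise, and one observes at once that the projection appearing in the statement is exactly $p=(i,g):\mathcal{S}\to\mathcal{A}\times\mathcal{B}$.

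Next I would verify the defining data of a split-exact sequence. The identities $g\circ f\simeq 0$, $i\circ f\simeq\mathrm{id}_\mathcal{A}$, $g\circ j\simeq\mathrm{id}_\mathcal{B}$ are immediate from the formulas. For each $(A,B,Y)$ the pushout square witnessing $\mathcal{F}_\mathcal{A}(A)\to Y\to\mathcal{F}_\mathcal{B}(B)$ supplies a canonical morphism $(A,B,Y)\to j(B)$ whose fibre, taken componentwise, is $f(A)$, giving a natural cofiber sequence
\[
f(i(A,B,Y))\to (A,B,Y)\to j(g(A,B,Y))
\]
in $\mathcal{S}$. A mapping-space computation in the pullback $\mathcal{S}$ (the $\mathcal{A}$- and $\mathcal{B}$-components are contractible, and the $\mathcal{C}$-component together with the required commutativity contributes the path space of $\mathrm{Map}_\mathcal{C}(\mathcal{F}_\mathcal{A}(A),\mathcal{F}_\mathcal{B}(B))$ based at $0$) shows $\mathrm{Map}_\mathcal{S}(f(A),j(B))\simeq *$, and the analogous computations give $\mathrm{Map}_\mathcal{S}(f(A),f(A'))\simeq\mathrm{Map}_\mathcal{A}(A,A')$ and $\mathrm{Map}_\mathcal{S}(j(B),j(B'))\simeq\mathrm{Map}_\mathcal{B}(B,B')$, so $f$ and $j$ are fully faithful. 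The cofiber sequence above together with the vanishing of $\mathrm{Map}_\mathcal{S}(f(\mathcal{A}),j(\mathcal{B}))$ exhibits $(f(\mathcal{A}),j(\mathcal{B}))$ as a semiorthogonal decomposition of $\mathcal{S}$; from it one reads off that $i$ is right adjoint to $f$ (counit the sub-object inclusion), that $j$ is right adjoint to $g$ (since $\mathrm{Map}_\mathcal{S}(X,j(B'))\simeq\mathrm{Map}_\mathcal{S}(j(g(X)),j(B'))\simeq\mathrm{Map}_\mathcal{B}(g(X),B')$), and that $g$ realizes the Verdier quotient $\mathcal{S}/f(\mathcal{A})\xrightarrow{\simeq}\mathcal{B}$. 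Hence the sequence is split-exact.

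Finally, since $\mathcal{U}_{\mathrm{add}}$ is an additive invariant it sends this split-exact sequence to an equivalence $\mathcal{U}_{\mathrm{add}}(\mathcal{A})\vee\mathcal{U}_{\mathrm{add}}(\mathcal{B})\xrightarrow{\simeq}\mathcal{U}_{\mathrm{add}}(\mathcal{S})$ whose two structure inclusions are $\mathcal{U}_{\mathrm{add}}(f)$ and $\mathcal{U}_{\mathrm{add}}(j)$. As $\mathcal{M}_{\mathrm{add}}$ is stable, this biproduct is also a product, and the complementary projections are $\mathcal{U}_{\mathrm{add}}(i)$ (a retraction of $\mathcal{U}_{\mathrm{add}}(f)$ satisfying $\mathcal{U}_{\mathrm{add}}(i)\circ\mathcal{U}_{\mathrm{add}}(j)=\mathcal{U}_{\mathrm{add}}(i\circ j)\simeq 0$) and $\mathcal{U}_{\mathrm{add}}(g)$. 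Therefore $\mathcal{U}_{\mathrm{add}}(p)=(\mathcal{U}_{\mathrm{add}}(i),\mathcal{U}_{\mathrm{add}}(g)):\mathcal{U}_{\mathrm{add}}(\mathcal{S})\to\mathcal{U}_{\mathrm{add}}(\mathcal{A})\times\mathcal{U}_{\mathrm{add}}(\mathcal{B})$ is the inverse equivalence, which is exactly the assertion.

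I expect the main obstacle to be the homotopy-coherent bookkeeping needed to make the mapping-space computations and the semiorthogonal-decomposition argument precise inside the pullback $\infty$-category $\mathcal{S}$: in particular, promoting the componentwise morphism $(A,B,Y)\to j(B)$ to a genuine cofiber sequence with fibre $f(A)$ at the level of the diagrams defining $\mathrm{Gap}([2],\mathcal{C})$, and checking that the induced functor $\mathcal{S}/f(\mathcal{A})\to\mathcal{B}$ is an equivalence (an equivalence only after idempotent completion would already suffice for the notion of exact sequence used here). Once the split-exact structure is established, the remaining deduction from the additive-invariant axioms is formal.
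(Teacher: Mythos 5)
Your proposal is correct and follows essentially the same route as the paper: both exhibit $\mathcal{A}\xrightarrow{f}\mathcal{S}(\mathcal{F}_\mathcal{A},\mathcal{F}_\mathcal{B})\xrightarrow{g}\mathcal{B}$ as a split-exact sequence with the same four functors $f,i,g,j$ and then invoke the additive-invariant axiom for $\mathcal{U}_{\mathrm{add}}$. The only difference is that the paper simply asserts split-exactness, whereas you supply the mapping-space computations, adjunctions, and the Verdier-quotient identification that justify it.
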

\begin{proof}
There is a split exact sequence
\[
\mathcal{A}\overset{\xrightarrow{f}}{\xleftarrow[i]{}}\mathcal{S}(\mathcal{A},\mathcal{C},\mathcal{B})\overset{\xrightarrow{g}}{\xleftarrow[j]{}}\mathcal{B},
\]
where $i,g$ are projections, $f,j$ are defined by
\[
\begin{aligned}
f:\mathcal{A}&\to\mathcal{A}\times \mathcal{B},X\mapsto(X,0)\\
\mathcal{A}&\to\mathrm{Gap}([2],\mathcal{C}),X\mapsto (0\to \mathcal{F}_\mathcal{A}(X)\to\mathcal{F}_\mathcal{A}(X))\\
g:\mathcal{B}&\to\mathcal{A}\times\mathcal{B}, Y\mapsto(0,Y)\\
\mathcal{B}&\to\mathrm{Gap}([2],\mathcal{C}), Y\mapsto (0\to 0\to\mathcal{F}_\mathcal{B}(Y)).
\end{aligned}
\]
Thus, it induces a homotopy equivalence
\[
\mathcal{U}_{\mathrm{add}}( \mathcal{S}(\mathcal{F}_\mathcal{A},\mathcal{F}_\mathcal{B}))\xrightarrow{\simeq} \mathcal{U}_{\mathrm{add}}(\mathcal{A})\times \mathcal{U}_{\mathrm{add}}(\mathcal{B}).
\]
\end{proof}
Let $\mathcal{F}:\mathcal{A}\to\mathcal{C}$ be an $\mathcal{E}$-linear functor in $\mathrm{Cat}^\mathrm{perf}(\mathcal{E})$. Denote by $q_{n,\mathcal{F}}:\mathrm{Gap}([n],\mathcal{F})\to\mathcal{A}^n$ the natural projections.
\begin{prop}\label{prop:additive}
     For each $n \geq 1$, the exact functor $q_{n,\mathcal{F}}$ induces a homotopy equivalence:
     \[
     \mathcal{U}_{\mathrm{add}}(\mathrm{Gap}([n],\mathcal{F}))\xrightarrow{\simeq}\mathcal{U}_{\mathrm{add}}(\mathcal{A})^n.
     \]
      Furthermore, this also induces a homotopy equivalence $\mathcal{U}_{\mathrm{add}}(\mathrm{Gap}(\mathcal{F})) \simeq \mathrm{colim}_n \mathcal{U}_{\mathrm{add}}(\mathcal{A}^n)$.
\end{prop}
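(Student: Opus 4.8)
The plan is to prove the first assertion by induction on $n$, with the Additivity Theorem \ref{thm:additivity} as the engine of the inductive step, and then to obtain the colimit statement by passing to the filtered colimit and using that $\mathcal{U}_{\mathrm{add}}$ is additive (so $\mathcal{U}_{\mathrm{add}}(\mathcal{A}^n)\simeq\mathcal{U}_{\mathrm{add}}(\mathcal{A})^n$) and commutes with filtered colimits. The base case $n=1$ is immediate: $\operatorname{Ar}[1]$ has objects $(0,0),(0,1),(1,1)$, the gap condition forces $X_{0,0}=X_{1,1}=0$, so a gap object is just its value at $(0,1)$; hence $\mathrm{Gap}([1],\mathcal{C})\simeq\mathcal{C}$, and taking the pullback along $\mathcal{F}$ gives $\mathrm{Gap}([1],\mathcal{F})\simeq\mathcal{A}$ with $q_{1,\mathcal{F}}$ the identity.

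For the inductive step, the heart of the matter is a natural equivalence
$$\mathrm{Gap}([n+1],\mathcal{F})\;\simeq\;\mathcal{S}\bigl(\mathcal{F},\mathrm{ev}_{n,\mathcal{A}}\bigr),$$
where $\mathrm{ev}_{n,\mathcal{A}}:\mathrm{Gap}([n],\mathcal{F})\to\mathcal{C}$ is the evaluation functor $(B_\bullet,Z_{\bullet,\bullet})\mapsto Z_{0,n}$. On objects, an element $(A_\bullet,X_{\bullet,\bullet})$ of $\mathrm{Gap}([n+1],\mathcal{F})$ goes to the triple whose first entry is $A_1$ (recall $\mathcal{F}(A_1)=X_{0,1}$), whose second entry is the truncation $\bigl(A_2,\dots,A_{n+1},\,[X_{1,2}\to\cdots\to X_{1,n+1}]\bigr)\in\mathrm{Gap}([n],\mathcal{F})$ — a legitimate object because its first term is $X_{1,2}=\mathcal{F}(A_2)$ and its consecutive cofibers are $\mathcal{F}(A_3),\dots,\mathcal{F}(A_{n+1})$ — and whose third entry is the cofiber sequence $\mathcal{F}(A_1)=X_{0,1}\to X_{0,n+1}\to X_{1,n+1}$, where $X_{1,n+1}$ is precisely $\mathrm{ev}_{n,\mathcal{A}}$ of the truncation. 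An inverse is given by décalage: from a triple $\bigl(A,(B_\bullet,Z_{\bullet,\bullet}),[\mathcal{F}(A)\to Y\to Z_{0,n}]\bigr)$ one reconstructs the chain $X_{0,1}=\mathcal{F}(A)\to\cdots\to X_{0,n+1}=Y$ by pulling the flag $0=Z_{0,0}\to Z_{0,1}\to\cdots\to Z_{0,n}$ back along $Y\to Z_{0,n}$. That these constructions are mutually inverse equivalences of $\infty$-categories I would check by working inside the functor categories $\mathrm{Fun}(\mathrm{N}(\operatorname{Ar}[m]),\mathcal{C})$, using the forgetful equivalence $\mathrm{Gap}([m],\mathcal{C})\simeq\mathrm{Fun}(\Delta^{m-1},\mathcal{C})$ \cite[Lemma 7.3]{blumberg_gepner_tabuada_2013} and Lemma \ref{lem:diagram3} to identify the pullback squares defining the two sides.

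Granting this, the induction closes. First, the equivalence is compatible with projections: under it, $q_{n+1,\mathcal{F}}:\mathrm{Gap}([n+1],\mathcal{F})\to\mathcal{A}^{n+1}$ corresponds to the composite $\mathcal{S}(\mathcal{F},\mathrm{ev}_{n,\mathcal{A}})\to\mathcal{A}\times\mathrm{Gap}([n],\mathcal{F})\xrightarrow{\ \mathrm{id}_\mathcal{A}\times q_{n,\mathcal{F}}\ }\mathcal{A}\times\mathcal{A}^n=\mathcal{A}^{n+1}$, whose first map is the projection appearing in Theorem \ref{thm:additivity} (both composites send $(A_\bullet,X_{\bullet,\bullet})$ to $(A_1,A_2,\dots,A_{n+1})$). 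Second, Theorem \ref{thm:additivity} does apply, with $\mathcal{F}_\mathcal{A}=\mathcal{F}$ and $\mathcal{F}_\mathcal{B}=\mathrm{ev}_{n,\mathcal{A}}$, since $\mathrm{Gap}([n],\mathcal{F})$ is idempotent-complete and the common target $\mathcal{C}$ lies in $\mathrm{Cat}_\infty^{\mathrm{perf}}$; it yields $\mathcal{U}_{\mathrm{add}}(\mathrm{Gap}([n+1],\mathcal{F}))\xrightarrow{\simeq}\mathcal{U}_{\mathrm{add}}(\mathcal{A})\times\mathcal{U}_{\mathrm{add}}(\mathrm{Gap}([n],\mathcal{F}))$. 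Composing with the inductive hypothesis and the compatibility just noted shows that $q_{n+1,\mathcal{F}}$ induces $\mathcal{U}_{\mathrm{add}}(\mathrm{Gap}([n+1],\mathcal{F}))\simeq\mathcal{U}_{\mathrm{add}}(\mathcal{A})^{n+1}$. Finally, $\mathrm{Gap}(\mathcal{F})=\operatorname{colim}_n\mathrm{Gap}([n],\mathcal{F})$ is a filtered colimit along the $i_{n,\mathcal{F}}$, which under the $q_{n,\mathcal{F}}$ correspond to the maps $\mathcal{A}^n\to\mathcal{A}^{n+1}$, $(A_1,\dots,A_n)\mapsto(A_1,\dots,A_n,0)$; since $\mathcal{U}_{\mathrm{add}}$ preserves filtered colimits, the first part gives $\mathcal{U}_{\mathrm{add}}(\mathrm{Gap}(\mathcal{F}))\simeq\operatorname{colim}_n\mathcal{U}_{\mathrm{add}}(\mathcal{A}^n)$.

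The step I expect to be the real obstacle is establishing the equivalence $\mathrm{Gap}([n+1],\mathcal{F})\simeq\mathcal{S}(\mathcal{F},\mathrm{ev}_{n,\mathcal{A}})$ and its compatibility with the projections at the level of $\infty$-categories rather than merely on objects and morphisms: one must realize the truncation and décalage constructions as honest functors between the relevant large functor categories, check that décalage actually lands in the gap subcategory, and verify that the resulting functors match up the defining pullbacks. The other ingredients — the additivity input, the bookkeeping of which factor of $\mathcal{A}^{n+1}$ each $A_i$ occupies, and the filtered-colimit manipulation — should be routine.
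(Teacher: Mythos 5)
Your proof is correct and takes essentially the same route as the paper: induction on $n$, peeling off the first graded piece via the d\'ecalage decomposition and the Additivity Theorem \ref{thm:additivity}, then passing to the filtered colimit. The only (cosmetic) difference is that you identify $\mathrm{Gap}([n+1],\mathcal{F})$ outright with the extension category $\mathcal{S}(\mathcal{F},\mathrm{ev}_{n,\mathcal{A}})$ so that Theorem \ref{thm:additivity} applies verbatim, whereas the paper instead maps $\mathrm{Gap}([n],\mathcal{F})$ into $\mathcal{S}(\mathcal{F}_1,\mathcal{F}_2)$ for a pair of functors valued in $\mathrm{Gap}([n],\mathcal{F})$ itself and uses the section--retraction pair $(s_1,s_2)$, $\mathcal{F}_1\lor\mathcal{F}_2$ to produce the splitting.
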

\begin{proof}
Define $\mathcal{F}_1:\mathcal{A} \to \mathrm{Gap}([n],\mathcal{F})$ by
\[
\begin{aligned}
    \mathcal{A}&\to \mathrm{Gap}([n],\mathcal{C}), X\mapsto (0\to\mathcal{F}(X)\to\mathcal{F}(X)\to\cdots\to\mathcal{F}(X))\\
    \mathcal{A}&\to\mathcal{A}^n, X\mapsto (X,0,\cdots,0).
\end{aligned}
\]
Define $\mathcal{F}_2:\mathrm{Gap}([n-1],\mathcal{F})\to \mathrm{Gap}([n],\mathcal{F})$ by
\[
\begin{aligned}
    \mathrm{Gap}([n-1],\mathcal{C})&\to\mathrm{Gap}([n],\mathcal{C}),(0\to X_{0,1}\to\cdots\to X_{0,n-1})\mapsto (0\to 0\to X_{0,1}\to\cdots\to X_{0,n-1})\\
    \mathcal{A}^{n-1}&\to\mathcal{A}^n, (A_1,\cdots,A_{n-1})\mapsto (0,A_1,\cdots,A_{n-1})\\
    \mathcal{C}^{n-1}&\to\mathcal{C}^n, (X_1,\cdots,X_{n-1})\mapsto (0,X_1,\cdots,X_{n-1})
\end{aligned}
\]
These functors admit projections given by the exact functors
\[
\begin{aligned}
s_1: \mathrm{Gap}([n],\mathcal{F}) &\to \mathcal{A}, \quad (A_\bullet,X_{\bullet,\bullet}) \mapsto A_1, \\
s_2: \mathrm{Gap}([n],\mathcal{F}) &\to \mathrm{Gap}([n-1],\mathcal{F}), \quad (A_\bullet,X_{\bullet,\bullet}) \mapsto \bigl(0,A_2,\cdots,A_n,0 \to 0 \to X_{1,2} \to \cdots \to X_{1,n}\bigr),
\end{aligned}
\]
which assemble to an exact functor $\mathrm{Gap}([n],\mathcal{F}) \to \mathcal{S}\bigl(\mathcal{F}_1,\mathcal{F}_2\bigr)$. By the Additivity Theorem \ref{thm:additivity}, the pair of exact functors
\[
(s_1,s_2): \mathrm{Gap}([n],\mathcal{F}) \overset{\rightarrow}{\leftarrow} \mathrm{Gap}([n-1],\mathcal{F}) \times \mathcal{A} : \mathcal{F}_2\lor\mathcal{F}_2
\]
induces inverse homotopy equivalences for $\mathcal{U}_{\mathrm{add}}$. Inductively, this yields homotopy equivalences
\[
\mathcal{U}_{\mathrm{add}}\bigl(\mathrm{Gap}([n],\mathcal{F})\bigr) \simeq \mathcal{U}_{\mathrm{add}}\bigl(\mathrm{Gap}([n-1],\mathcal{F})\bigr) \times \mathcal{U}_{\mathrm{add}}(\mathcal{A}) \simeq \cdots \simeq \mathcal{U}_{\mathrm{add}}(\mathcal{A})^n\simeq \mathcal{U}_{\mathrm{add}}(\mathcal{A}^n).
\]
Direct inspection shows the composite homotopy equivalence is defined by the exact functor \(q_{n,\mathcal{F}}\). The map \(\mathcal{U}_{\mathrm{add}}(q_{n,\mathcal{F}})\) admits a homotopy inverse given by the exact functor
\[
\mathcal{A}^n \to \mathrm{Gap}([n],\mathcal{F}), \quad (A_1,\cdots,A_n) \mapsto \bigl(A_1,\cdots,A_n,0 \to \mathcal{F}(A_1) \to \mathcal{F}(A_1) \lor \mathcal{F}(A_2) \to \cdots \to \lor_{i=1}^n \mathcal{F}(A_i)\bigr).
\]

Moreover, we have commutative diagrams of exact functors:
\[
\begin{tikzcd}
\mathrm{Gap}([n],\mathcal{F}) \ar[r, "i_{n,\mathcal{F}}"] \ar[d, bend left=20] & \mathrm{Gap}([n+1],\mathcal{F}) \ar[d, bend left=20] \\
\mathcal{A}^n \ar[r] \ar[u, bend left=20] & \mathcal{A}^{n+1} \ar[u, bend left=20]
\end{tikzcd}
\]
where the bottom functor is the canonical inclusion, sending \((A_1, \cdots, A_{n-1}) \mapsto (A_1, \cdots, A_{n-1}, 0)\). The case \(n = \infty\) follows immediately, as \(\operatorname{colim}_n \mathcal{U}_{\mathrm{add}}\bigl(\mathrm{Gap}([n],\mathcal{F})\bigr) \stackrel{\simeq}{\to} \mathcal{U}_{\mathrm{add}}\bigl(\mathrm{Gap}(\mathcal{F})\bigr)\).
\end{proof}
\subsection{Categorization of fiber with d\'evissage condition}
In this section, we still assume $\mathcal{F}:\mathcal{A}\to\mathcal{C}$ is an $\mathcal{E}$-linear functor in $\mathrm{Cat}^\mathrm{perf}(\mathcal{E})$. For each $n \geq 1$, we have an $\mathcal{E}$-linear functor:
\[
q_{n,\mathcal{F}}^w:\mathrm{Gap}^w([n],\mathcal{F})\to\mathcal{A}^{n-1},(A_\bullet,X_{\bullet,\bullet})\mapsto (A_1,\cdots,A_{n-1}),
\]
It has an $\mathcal{E}$-linear section
\[
\begin{aligned}
    j_{n,\mathcal{F}}:&\mathcal{A}^{n-1}\to\mathrm{Gap}^w([n],\mathcal{F}),\\
    (A_1,\cdots,A_{n-1})&\mapsto(A_1,\cdots,A_{n-1},\lor_{i=1}^{n-1}\Sigma A_i,0\to \mathcal{F}(A_1)\to \mathcal{F}(A_1)\lor \mathcal{F}(A_2)\to\cdots\to \lor_{i=1}^{n-1}\mathcal{F}(A_i)\to0).
\end{aligned}
\]
Define
\[
\begin{aligned}
    \tau_{n,\mathcal{F}}:&\mathcal{A}^{n-1}\to\mathrm{Gap}^w([n],\mathcal{F}),\\
    (A_1,\cdots,A_{n-1})&\mapsto(A_1,A_2\lor\Sigma A_1,\cdots, A_{n-1}\lor\Sigma A_{n-2},\Sigma A_{n-1},0\to \mathcal{F}(A_1)\to\cdots\to \mathcal{F}(A_{n-1})\to0),
\end{aligned}
\]
where the morphism $\mathcal{F}(A_i)\to \mathcal{F}(A_{i+1})$ is the zero morphism for each $i$. The composite functor
\[
q_{n,\mathcal{F}}^w\circ\tau_{n,\mathcal{F}}:(A_1,\cdots,A_{n-1})\mapsto(A_1,A_2\lor\Sigma A_1,\cdots, A_{n-1}\lor\Sigma A_{n-2}),
\]
induces a homotopy equivalence for any additive invariant.

The diagram
\[
\begin{tikzcd}
    \mathrm{Gap}^w([n],\mathcal{F})\ar[r,"i_{n,\mathcal{F}}^w"]&\mathrm{Gap}^w([n+1],\mathcal{F})\\
    \mathcal{A}^{n-1}\ar[r]\ar[u,"\tau_{n,\mathcal{F}}"]&\mathcal{A}^{n}\ar[u,"\tau_{n+1,\mathcal{F}}"]
\end{tikzcd}
\]
commutes, where the bottom functor is the inclusion $(A_1,\cdots,A_{n-1})\mapsto(A_1,\cdots,A_{n-1},0)$. Thus, we get a functor
\[
\tau_\mathcal{F}:\mathrm{colim}_n\mathcal{A}^{n-1}\to \mathrm{Gap}^w(\mathcal{F}).
\]
We introduce a result about categorification of cofiber and its proof.
\begin{prop}\cite[Proposition 3.7]{Maxime_Vladimir_Christoph_2024}\label{prop:categorificationofcofiber}
    Let $\mathcal{F}:\mathcal{C}\to\mathcal{D}$ be an $\mathcal{E}$-linear functor. There exists an $\mathcal{E}$-linear category $\mathrm{Cone}(F)$ with an $\mathcal{E}$-linear functor $\mathcal{D}\to\mathrm{Cone}(\mathcal{F})$ that induces an equivalence
    \[
    \operatorname{cofib}(\mathcal{U}_\mathrm{loc}(\mathcal{C})\xrightarrow{\mathcal{U}_\mathrm{loc}(F)}\mathcal{U}_\mathrm{loc}(\mathcal{D}))\simeq \mathcal{U}_\mathrm {loc}(\mathrm{Cone}(\mathcal{F}))
    \]
\end{prop}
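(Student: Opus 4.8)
The strategy is to present $\operatorname{cofib}(\mathcal{U}_{\mathrm{loc}}(\mathcal{F}))$ as the value of $\mathcal{U}_{\mathrm{loc}}$ on a Verdier quotient, exploiting that a localizing invariant carries exact sequences to fiber ($=$ cofiber) sequences (Definition \ref{def:local}). The difficulty is that $\mathcal{F}$ need not be fully faithful, so there is no exact sequence with $\mathcal{F}$ in the middle, and the thick closure of the image of $\mathcal{F}$ in $\mathcal{D}$ generally carries the wrong localizing invariant. So the first step is to replace $\mathcal{F}$, up to a localizing equivalence on the target, by a fully faithful inclusion. Fix a small stable idempotent-complete $\infty$-category $\mathcal{PC}$ --- a ``flasque cone'' of $\mathcal{C}$ --- with a fully faithful exact functor $e\colon\mathcal{C}\hookrightarrow\mathcal{PC}$ realizing $\mathcal{C}$ as a thick subcategory and with $\mathcal{U}_{\mathrm{loc}}(\mathcal{PC})\simeq 0$; for instance one may take $\mathcal{PC}$ to be the smallest stable idempotent-complete full subcategory of $\mathrm{Ind}(\mathcal{C})$ that contains $\mathcal{C}$ and is closed under countable coproducts, for which an Eilenberg swindle (using that $\mathcal{U}_{\mathrm{loc}}$ is additive) forces $\mathcal{U}_{\mathrm{loc}}(\mathcal{PC})\simeq 0$. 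Then define $\mathrm{Cone}(\mathcal{F})$ to be the pushout
\[
\begin{tikzcd}
\mathcal{C}\ar[r,"\mathcal{F}"]\ar[d,hook,"e"']&\mathcal{D}\ar[d]\\
\mathcal{PC}\ar[r]&\mathrm{Cone}(\mathcal{F})
\end{tikzcd}
\]
in $\mathrm{Cat}_\infty^{\mathrm{perf}}$, together with the displayed exact functor $\mathcal{D}\to\mathrm{Cone}(\mathcal{F})$.

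The second step is to show that this pushout square is moreover ``exact in the vertical direction'': the cobase change $\mathcal{D}\to\mathrm{Cone}(\mathcal{F})$ of the Verdier inclusion $e$ is again fully faithful with thick image, and its Verdier quotient is canonically equivalent to $\mathcal{PC}/\mathcal{C}$; equivalently, $\mathcal{D}\to\mathrm{Cone}(\mathcal{F})\to\mathcal{PC}/\mathcal{C}$ is an exact sequence. I would establish this by a direct analysis of how Verdier sequences transform under cobase change --- that is, that the functor sending a fully faithful arrow of $\mathrm{Cat}_\infty^{\mathrm{perf}}$ to its Verdier quotient takes the pushout square above to a pushout --- working with explicit models of Verdier quotients and using Lemmas \ref{lem:diagram3} and \ref{lem:cofiberofpullback} to produce the required cofiber sequences and to verify that $\mathcal{D}\to\mathrm{Cone}(\mathcal{F})$ is fully faithful on mapping spectra.

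Finally, apply $\mathcal{U}_{\mathrm{loc}}$. From the exact sequence $\mathcal{C}\xrightarrow{e}\mathcal{PC}\to\mathcal{PC}/\mathcal{C}$ we get $\mathcal{U}_{\mathrm{loc}}(\mathcal{PC}/\mathcal{C})\simeq\operatorname{cofib}(\mathcal{U}_{\mathrm{loc}}(\mathcal{C})\to\mathcal{U}_{\mathrm{loc}}(\mathcal{PC}))\simeq\operatorname{cofib}(\mathcal{U}_{\mathrm{loc}}(\mathcal{C})\to 0)\simeq\Sigma\mathcal{U}_{\mathrm{loc}}(\mathcal{C})$, and from $\mathcal{D}\to\mathrm{Cone}(\mathcal{F})\to\mathcal{PC}/\mathcal{C}$ a cofiber sequence $\mathcal{U}_{\mathrm{loc}}(\mathcal{D})\to\mathcal{U}_{\mathrm{loc}}(\mathrm{Cone}(\mathcal{F}))\to\Sigma\mathcal{U}_{\mathrm{loc}}(\mathcal{C})$. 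Rotating the latter identifies $\mathcal{U}_{\mathrm{loc}}(\mathrm{Cone}(\mathcal{F}))$, compatibly with $\mathcal{D}\to\mathrm{Cone}(\mathcal{F})$, with the cofiber of the connecting map $\mathcal{U}_{\mathrm{loc}}(\mathcal{C})\to\mathcal{U}_{\mathrm{loc}}(\mathcal{D})$; it then only remains to check that this connecting map agrees, up to a sign (harmless for the cofiber), with $\mathcal{U}_{\mathrm{loc}}(\mathcal{F})$. I would check this by applying $\mathcal{U}_{\mathrm{loc}}$ to the entire pushout square and chasing naturality, using in particular that $\mathcal{C}\to\mathcal{D}\to\mathrm{Cone}(\mathcal{F})$ factors as $\mathcal{C}\xrightarrow{e}\mathcal{PC}\to\mathrm{Cone}(\mathcal{F})$ and is therefore null on $\mathcal{U}_{\mathrm{loc}}$ (it passes through $\mathcal{U}_{\mathrm{loc}}(\mathcal{PC})=0$). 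This gives $\mathcal{U}_{\mathrm{loc}}(\mathrm{Cone}(\mathcal{F}))\simeq\operatorname{cofib}(\mathcal{U}_{\mathrm{loc}}(\mathcal{F}))$, and $\mathrm{Cone}(\mathcal{F})$ is even idempotent-complete.

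The step I expect to be the main obstacle is the second one: controlling the cobase change of a Verdier inclusion, i.e. simultaneously that $\mathcal{D}\to\mathrm{Cone}(\mathcal{F})$ stays fully faithful with thick image --- which arbitrary pushouts do not respect --- and that the Verdier quotient is unchanged. This forces one to use the specific structure of Verdier sequences in $\mathrm{Cat}_\infty^{\mathrm{perf}}$, rather than formal colimit manipulations.
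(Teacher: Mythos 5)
Your first and last steps are fine: a flasque envelope $e\colon\mathcal{C}\hookrightarrow\mathcal{PC}$ with $\mathcal{U}_{\mathrm{loc}}(\mathcal{PC})\simeq 0$ does exist, and the consequences you draw once an exact sequence $\mathcal{D}\to\mathrm{Cone}(\mathcal{F})\to\mathcal{PC}/\mathcal{C}$ is in hand are correct. The gap is exactly where you locate it, and it is not a technicality but a missing theorem: you need that the cobase change in $\mathrm{Cat}_\infty^{\mathrm{perf}}$ of the Verdier inclusion $e$ along the (non-fully-faithful) functor $\mathcal{F}$ is again fully faithful with unchanged Verdier quotient. What is formal is only that $\operatorname{cofib}(\mathcal{D}\to\mathcal{PC}\sqcup_{\mathcal{C}}\mathcal{D})\simeq\operatorname{cofib}(e)$ by pasting pushouts; but a cofiber sequence in $\mathrm{Cat}_\infty^{\mathrm{perf}}$ is not an exact sequence in the sense of Definition \ref{def:local} unless the first functor is fully faithful, and a localizing invariant is only required to split exact sequences. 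Establishing full faithfulness of $\mathcal{D}\to\mathcal{PC}\sqcup_{\mathcal{C}}\mathcal{D}$ requires control of mapping spectra in a pushout of stable $\infty$-categories, for which there is no usable formula; Lemmas \ref{lem:diagram3} and \ref{lem:cofiberofpullback} concern squares inside a single stable $\infty$-category and say nothing about such pushouts. Note also how strong your claim is: if it held, every localizing invariant would carry every pushout square along a fully faithful functor to a pushout of spectra, an excision-type statement that the lax-pullback technology in the literature exists precisely to avoid assuming.

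The paper's proof (following Ramzi--Sosnilo--Winges) is engineered to sidestep this point. Instead of pushing $e$ out along $\mathcal{F}$, one embeds $\mathcal{C}$ fully faithfully into the lax pullback $\mathcal{D}\overset{\to}{\times}_{\mathrm{Ind}(\mathcal{D})}\mathrm{Ind}(\mathcal{C})$ via $X\mapsto(\mathcal{F}(X),X,\mathrm{id}_{\mathcal{F}(X)})$ --- here full faithfulness is an explicit mapping-space computation, not a cobase-change assertion --- and defines $\mathrm{Cone}(\mathcal{F})$ as the idempotent completion of the honest Verdier quotient by this copy of $\mathcal{C}$. Tamme's additivity theorem for lax pullbacks gives $\mathcal{U}_{\mathrm{loc}}\bigl(\mathcal{D}\overset{\to}{\times}_{\mathrm{Ind}(\mathcal{D})}\mathrm{Ind}(\mathcal{C})\bigr)\simeq\mathcal{U}_{\mathrm{loc}}(\mathcal{D})\oplus\mathcal{U}_{\mathrm{loc}}(\mathrm{Ind}(\mathcal{C}))$, the second summand vanishes by the Eilenberg swindle, and the composite of the embedding with the projection to $\mathcal{D}$ is literally $\mathcal{F}$, which identifies the connecting map without any sign-chasing. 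To salvage your outline you would either have to substitute this lax construction for your pushout, or actually prove the cobase-change statement, which would be a substantial result in its own right.
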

\begin{proof}
    Define the \emph{lex pullback} $\mathcal{D}\overset{\to}{\times}_{\mathrm{Ind}(\mathcal{D})}\mathrm{Ind}(\mathcal{C})$ by the pullback
    \[
    \begin{tikzcd}
        \mathcal{D}\overset{\to}{\times}_{\mathrm{Ind}(\mathcal{D})}\mathrm{Ind}(\mathcal{C})\ar[r]\ar[d]&\ar[d]\mathrm{Fun}(\Delta^1,\mathrm{Ind}(\mathcal{D}))\\
        \mathcal{D}\times\mathrm{Ind}(\mathcal{C})\ar[r]& \mathrm{Ind}(\mathcal{D})\times \mathrm{Ind}(\mathcal{D}).
    \end{tikzcd}
    \]
    Then there is a fully faithful functor $\mathcal{C}\to \mathcal{D}\overset{\to}{\times}_{\mathrm{Ind}(\mathcal{D})}\mathrm{Ind}(\mathcal{C}), X\mapsto(\mathcal{F}(X),X,\mathcal{F}(X)\xrightarrow{\mathrm{id}}\mathcal{F}(X))$. Define $\mathrm{Cone}(\mathcal{F}):=\mathrm{Idem}(\mathcal{D}\overset{\to}{\times}_{\mathrm{Ind}(\mathcal{D})}\mathrm{Ind}(\mathcal{C})/\mathcal{C})$. From \cite[Proposition 10]{Tamme_2018}, $\mathcal{U}_\mathrm{loc}(\mathcal{D}\overset{\to}{\times}_{\mathrm{Ind}(\mathcal{D})}\mathrm{Ind}(\mathcal{C}))\simeq \mathcal{U}_\mathrm{loc}(\mathcal{D})\oplus\mathcal{U}_\mathrm{loc}(\mathrm{Ind}(\mathcal{C}))$
    \[
    \operatorname{cofib}(\mathcal{U}_\mathrm{loc}(\mathcal{C})\xrightarrow{\mathcal{U}_\mathrm{loc}(\mathcal{F})}\mathcal{U}_\mathrm{loc}(\mathcal{D})\oplus\mathcal{U}_\mathrm{loc}(\mathrm{Ind}(\mathcal{C})))\simeq \mathcal{U}_\mathrm{loc}(\mathrm{Cone}(\mathcal{F})).
    \]
    Notice that $\mathcal{U}_\mathrm{loc}(\mathrm{Ind}(\mathcal{C}))\simeq 0$ since $\mathrm{Ind}(\mathcal{C})$ admits an Eilenberg swindle.
\end{proof}
\begin{remark}
    We replace $\mathrm{Ind}(-)^{\omega_1}$ by $\mathrm{Ind}(-)$. usually, $\mathrm{Ind}(\mathcal{C})$ is not small. To define localizing invariants, we need to use bigger universe. For the details of this technology, readers can refer to \cite{hennion_2017}. Simply put, we choose two universes $\mathbb{U}\in \mathbb{V}$. Then we use $\mathrm{Ind}^\mathbb{U}(\mathcal{C})$, which is $\mathbb{V}$-small. Hence, we could define localizing invariants of $\mathrm{Ind}^\mathbb{U}(\mathcal{C})$. Or, we could use $\mathrm{Ind}(-)^{\kappa}$ for some regular cardinal $\kappa>\omega$ like the construction in the original text \cite{Maxime_Vladimir_Christoph_2024}.
\end{remark}
Denote by $\mathrm{Q}(\mathcal{F}):=\mathrm{Cone}(\tau_\mathcal{F})$. Since $\mathcal{U}_\mathrm{loc}(\tau_{\mathcal{F}})$ is the section of $\mathcal{U}_\mathrm{loc}(q^w_{\mathcal{F}})$, the cofiber sequence
\[
\mathcal{U}_\mathrm{loc}(\operatorname{colim}_n\mathcal{A}^{n-1})\xrightarrow{\mathcal{U}_\mathrm{loc}(\tau_\mathcal{F})}\mathcal{U}_\mathrm{loc}(\mathrm{Gap}^w(\mathcal{F}))\to\mathcal{U}_\mathrm{loc}(\mathrm{Q}(\mathcal{F}))
\]
split.
\begin{thm}\label{thm:quotient}
    If $\mathcal{F}$ satisfies the weak d\'evissage condition, there is a homotopy equivalence
    \[
    \mathcal{U}_{\mathrm{loc}}(\mathrm{Q}(\mathcal{F}))\simeq \mathrm{fib}(\mathcal{U}_{\mathrm{loc}}(\mathcal{F}))
    \]
\end{thm}
\begin{proof}
    By Proposition \ref{prop:fullyfaithfulofev} and the definition of weak d\'evissage condition, there is a homotopy fiber sequence
\[
\mathcal{U}_{\mathrm{loc}}(\mathrm{Gap}^w(\mathcal{F}))\to \mathcal{U}_{\mathrm{loc}}(\mathrm{Gap}(\mathcal{F}))\to \mathcal{U}_{\mathrm{loc}}(\mathcal{C}).
\]
There is a homotopy fiber sequence
\[
\mathcal{U}_{\mathrm{loc}}(\mathcal{A}^{n-1})\to \mathcal{U}_{\mathrm{loc}}(\mathcal{A}^{n})\to \mathcal{U}_{\mathrm{loc}}(\mathcal{A})
\]
where the left map is induced from $(A_1,\cdots,A_{n-1})\mapsto(A_1,A_2\lor \Sigma A_1,\cdots, A_{n-1}\lor\Sigma A_{n-2},\Sigma A_{n-1})$ and the right map is induced by $\lor$. It induces a homotopy fiber sequence
\[
\mathcal{U}_{\mathrm{loc}}(\mathrm{colim}_n\mathcal{A}^{n-1})\to \mathcal{U}_{\mathrm{loc}}(\mathrm{colim}_n\mathcal{A}^{n})\to \mathcal{U}_{\mathrm{loc}}(\mathcal{A})
\]
Consider the diagram
\[
\begin{tikzcd}
    \mathcal{U}_{\mathrm{loc}}(\mathrm{Gap}^w(\mathcal{F}))\ar[r]& \mathcal{U}_{\mathrm{loc}}(\mathrm{Gap}(\mathcal{F}))\ar[r] & \mathcal{U}_{\mathrm{loc}}(\mathcal{C})\\
    \mathcal{U}_{\mathrm{loc}}(\mathrm{colim}_n\mathcal{A}^{n-1})\ar[r]\ar[u,"\mathcal{U}_{\mathrm{loc}}(\tau_\mathcal{F})"] & \mathcal{U}_{\mathrm{loc}}(\mathrm{colim}_n\mathcal{A}^{n})\ar[r]\ar[u,"\mathcal{U}_{\mathrm{loc}}(\mathrm{colim}_nq_{n,\mathcal{F}}^{-1})"] & \mathcal{U}_{\mathrm{loc}}(\mathcal{A})\ar[u,"\mathcal{U}_\mathrm{loc}(\mathcal{F})"]
\end{tikzcd}
\]
Since $\mathcal{U}_{\mathrm{loc}}(\mathrm{colim}_nq_{n,\mathcal{F}}^{-1})$ is a homotopy equivalence by Proposition \ref{prop:additive}, we have a homotopy equivalence by Lemma \ref{lem:diagram1}
\[
\mathcal{U}_{\mathrm{loc}}(\mathrm{Q}(\mathcal{F}))\simeq\mathrm{cofib}(\mathcal{U}_{\mathrm{loc}}(\tau_\mathcal{F}))\simeq\Omega \mathrm{cofib}(\mathcal{U}_{\mathrm{loc}}(\mathcal{F})).
\]
Since $\mathcal{U}_{\mathrm{loc}}$ is into a stable $\infty$-category, we get $\mathcal{U}_{\mathrm{loc}}(\mathrm{Q}(\mathcal{F}))\simeq \mathrm{fib}(\mathcal{U}_{\mathrm{loc}}(\mathcal{F}))$.
\end{proof}
\begin{remark}
    Part of ideas of this Theorem from \cite{Raptis_2018}.
\end{remark}
\begin{cor}\label{cor:quotient}
     Given a localizing invariant $L$, if $\mathcal{F}$ satisfies the d\'evissage condition, $\mathcal{L}(\mathcal{F})$ is a homotopy equivalence if and only if $L(\mathrm{Q}(\mathcal{F}))$ is homotopy trivial.
\end{cor}
\begin{cor}
   If $\mathcal{F}$ satisfies the d\'evissage condition and $K_0(\mathcal{F})$ is an isomorphism, $\mathcal{F}$ induces isomorphisms
    \[
    K_n(\mathcal{F}):K_n(\mathcal{A})\xrightarrow{\simeq} K_n(\mathcal{C})
    \]
    for all $n\geq 1$ if and only if $K_n(\mathrm{Q}(\mathcal{F}))\simeq 0$ for all $n\geq 0$.
\end{cor}
\subsection{Categorization of loop}
Consider the diagonal map $\delta_\mathcal{C}:\mathcal{C}\to\mathcal{C}^2,X\mapsto 
(X,X)$. It is injective on both objects and morphisms.
\begin{lemma}\label{lem:diagdevissage}
    $\delta_\mathbf{C}$ satisfies the weak d\'evissage condition.
\end{lemma}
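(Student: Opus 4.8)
The plan is to unwind the definition. By Definition~\ref{def:weakdevissage} and the remark following it, the functor $\delta_\mathcal{C}:\mathcal{C}\to\mathcal{C}^2$ satisfies the weak d\'evissage condition exactly when $\mathrm{Idem}(\mathrm{E}(\delta_\mathcal{C}))=\mathcal{C}^2$, i.e.\ when every object $(X,Y)\in\mathcal{C}^2$ lies in the smallest stable full subcategory of $\mathcal{C}^2$ that contains all objects $\delta_\mathcal{C}(Z)=(Z,Z)$ and is closed under extensions and direct factors. So the whole proof amounts to producing, for an arbitrary $(X,Y)$, a description of it in terms of the $(Z,Z)$ using only extensions and retracts.

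First I would reduce to ``one-sided'' objects. Since finite (co)limits in $\mathcal{C}^2$ are computed componentwise, the map $(0\to X,\ \mathrm{id}_Y):(0,Y)\to(X,Y)$ has cofiber $(\mathrm{cofib}(0\to X),\mathrm{cofib}(\mathrm{id}_Y))=(X,0)$, so $(0,Y)\to(X,Y)\to(X,0)$ is a cofiber sequence in $\mathcal{C}^2$. Hence $(X,Y)$ is an extension of $(X,0)$ by $(0,Y)$, and it suffices to show that $(X,0)$ and $(0,Y)$ lie in $\mathrm{Idem}(\mathrm{E}(\delta_\mathcal{C}))$ for all $X,Y\in\mathcal{C}$.

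For this I would invoke the biproduct decomposition present in any stable $\infty$-category: $\delta_\mathcal{C}(X)=(X,X)\simeq(X,0)\oplus(0,X)$ in $\mathcal{C}^2$. Thus $(X,0)$ and $(0,X)$ are direct factors of $(X,X)\in\delta_\mathcal{C}(\mathcal{C})$ --- concretely the inclusion and the projection are both given by $(\mathrm{id}_X,0):(X,0)\rightleftarrows(X,X)$, whose composite is $\mathrm{id}_{(X,0)}$ --- and therefore they belong to $\mathrm{Idem}(\mathrm{E}(\delta_\mathcal{C}))$ by closure under direct factors. Combined with the previous paragraph, every $(X,Y)$ lies in $\mathrm{Idem}(\mathrm{E}(\delta_\mathcal{C}))$, so $\mathrm{Idem}(\mathrm{E}(\delta_\mathcal{C}))=\mathcal{C}^2$ and we are done.

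There is no real obstacle here; the only point deserving a moment's attention is that the retract statements are genuine $\infty$-categorical equivalences and not merely statements in $\mathrm{Ho}(\mathcal{C}^2)$ --- but this is immediate, since $(X,X)\simeq(X,0)\oplus(0,X)$ is just the canonical biproduct in the stable $\infty$-category $\mathcal{C}^2$ and the relevant projection--inclusion composites are literally identity maps. It is also worth noting that this example genuinely needs the passage to the thick closure (i.e.\ closure under direct factors): the objects $(X,0)$ are typically \emph{not} iterated extensions of objects of the form $(Z,Z)$, which is precisely why $\delta_\mathcal{C}$ satisfies only the \emph{weak} d\'evissage condition rather than the d\'evissage condition.
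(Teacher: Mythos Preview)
Your proof is correct. Both you and the paper rely on the same underlying observation (direct factors of diagonal objects), but the paper's argument is a one-liner: for any $(X,Y)\in\mathcal{C}^2$ one has
\[
(X,Y)\oplus(Y,X)\simeq(X\oplus Y,\,X\oplus Y)=\delta_\mathcal{C}(X\oplus Y)\in\delta_\mathcal{C}(\mathcal{C})\subseteq\mathrm{E}(\delta_\mathcal{C}),
\]
so $(X,Y)$ is already a retract of an object in the image of $\delta_\mathcal{C}$, with no need for the intermediate reduction to $(X,0)$ and $(0,Y)$ via a cofiber sequence. Your two-step route (extension + retract) is perfectly valid and makes the use of both closure properties explicit, whereas the paper's version needs only the retract closure; conversely, your argument uses only retracts of $\delta_\mathcal{C}(X)$ and $\delta_\mathcal{C}(Y)$ individually rather than of $\delta_\mathcal{C}(X\oplus Y)$, which is a mild conceptual difference but of no real consequence here.
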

\begin{proof}
    For $(X,Y)\in \mathcal{C}^2$, it is a direct factor of
    \[
    (X,Y)\oplus (Y,X)\simeq (X\oplus Y,X\oplus Y)\in\delta_\mathcal{C}(\mathcal{C})\subseteq\mathrm{E}(\delta_\mathcal{C}).
    \]
    Hence, $(X,Y)\in\mathrm{Idem}(\mathrm{E}(\delta_\mathcal{C}))$.
\end{proof}
Denote $\theta(\mathcal{C})=\mathrm{Q}(\delta_\mathcal{C})$. Then from Theorem \ref{thm:quotient}, we get the loop theorem.
\begin{thm}\label{thm:loop}
    Let $\mathcal{C}\in\mathrm{Cat}^{\mathrm{perf}}(\mathcal{E})$. Then we have a homotopy equivalence
    \[
    \mathcal{U}_\mathrm{loc}(\theta(\mathcal{C}))\simeq\operatorname{fib}(\mathcal{U}_\mathrm{loc}(\delta_\mathcal{C}))\simeq \Omega \mathcal{U}_\mathrm{loc}(\mathcal{C}).
    \]
    Besides, $\theta$ is functorial.
\end{thm}
\begin{cor}
    Let $\mathcal{F}:\mathcal{C}\to\mathcal{D}$ be an $\mathcal{E}$-linear functor. Then there is a homotopy equivalence
    \[
    \mathcal{U}_\mathrm{loc}(\mathrm{Cone}(\theta(\mathcal{F}))\simeq\operatorname{fib}(\mathcal{U}_\mathrm{loc}(\mathcal{F})).
    \]
\end{cor}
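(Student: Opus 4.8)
The plan is to deduce the statement from the loop theorem (Theorem~\ref{thm:loop}), the categorification of cofibers (Proposition~\ref{prop:categorificationofcofiber}), and formal properties of the stable $\infty$-category $\mathcal{M}_{\mathrm{loc}}$. Since $\theta$ is functorial, $\theta(\mathcal{F})\colon\theta(\mathcal{C})\to\theta(\mathcal{D})$ is an exact functor of small stable idempotent-complete $\infty$-categories (replacing $\mathcal{C},\mathcal{D}$ by their idempotent completions if necessary), so Proposition~\ref{prop:categorificationofcofiber} applies to it and gives
\[
\mathcal{U}_{\mathrm{loc}}(\mathrm{Cone}(\theta(\mathcal{F})))\simeq\operatorname{cofib}\bigl(\mathcal{U}_{\mathrm{loc}}(\theta(\mathcal{C}))\xrightarrow{\ \mathcal{U}_{\mathrm{loc}}(\theta(\mathcal{F}))\ }\mathcal{U}_{\mathrm{loc}}(\theta(\mathcal{D}))\bigr).
\]
It therefore suffices to identify this cofiber with $\operatorname{fib}(\mathcal{U}_{\mathrm{loc}}(\mathcal{F}))$.

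To do this I would upgrade Theorem~\ref{thm:loop} to a \emph{natural} equivalence. The diagonal $\delta$ is a natural transformation $\mathrm{id}\Rightarrow(-)^{2}$, so $\mathcal{F}$ induces a morphism of exact functors $\delta_{\mathcal{C}}\to\delta_{\mathcal{D}}$; applying the functorial constructions entering $\theta=\mathrm{Q}\circ\delta$ (namely $\mathrm{Gap}^w(-)$, $\tau_{(-)}$ and $\mathrm{Cone}(-)$) yields a map between the split cofiber sequences used in the proof of Theorem~\ref{thm:quotient} for $\delta_{\mathcal{C}}$ and for $\delta_{\mathcal{D}}$. Passing to cofibers of the $\tau$-maps and using that $\mathcal{U}_{\mathrm{loc}}(\operatorname{colim}_n q_{n,\delta_{(-)}}^{-1})$ is an equivalence naturally (Proposition~\ref{prop:additive}) produces a homotopy-commutative square
\[
\begin{tikzcd}
\mathcal{U}_{\mathrm{loc}}(\theta(\mathcal{C}))\ar[r,"\mathcal{U}_{\mathrm{loc}}(\theta(\mathcal{F}))"]\ar[d,"\simeq"']&\mathcal{U}_{\mathrm{loc}}(\theta(\mathcal{D}))\ar[d,"\simeq"]\\
\Omega\mathcal{U}_{\mathrm{loc}}(\mathcal{C})\ar[r,"\Omega\mathcal{U}_{\mathrm{loc}}(\mathcal{F})"']&\Omega\mathcal{U}_{\mathrm{loc}}(\mathcal{D})
\end{tikzcd}
\]
in which the bottom arrow is genuinely $\Omega\mathcal{U}_{\mathrm{loc}}(\mathcal{F})$: the fiber of a diagonal $X\to X\oplus X$ in a stable $\infty$-category is the loop of the cofiber of that split monomorphism, hence is $\Omega X$, and this identification is manifestly natural in $X$ (applied here with $X=\mathcal{U}_{\mathrm{loc}}(\mathcal{C})$, $X=\mathcal{U}_{\mathrm{loc}}(\mathcal{D})$, using $\mathcal{U}_{\mathrm{loc}}(\mathcal{C}^2)\simeq\mathcal{U}_{\mathrm{loc}}(\mathcal{C})\oplus\mathcal{U}_{\mathrm{loc}}(\mathcal{C})$).

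Finally, since $\mathcal{M}_{\mathrm{loc}}$ is stable, $\Omega$ is an equivalence that commutes with finite colimits, so $\operatorname{cofib}(\Omega g)\simeq\Omega\operatorname{cofib}(g)\simeq\operatorname{fib}(g)$ for every morphism $g$. Combining this with the two displays above gives
\[
\mathcal{U}_{\mathrm{loc}}(\mathrm{Cone}(\theta(\mathcal{F})))\simeq\operatorname{cofib}(\Omega\mathcal{U}_{\mathrm{loc}}(\mathcal{F}))\simeq\Omega\operatorname{cofib}(\mathcal{U}_{\mathrm{loc}}(\mathcal{F}))\simeq\operatorname{fib}(\mathcal{U}_{\mathrm{loc}}(\mathcal{F})),
\]
as claimed. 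The main obstacle is the middle paragraph: Theorem~\ref{thm:loop} as stated only records that $\theta$ is functorial and that the equivalence exists objectwise, so some genuine bookkeeping is needed to verify that $\mathcal{U}_{\mathrm{loc}}(\theta(\mathcal{F}))$ really corresponds to $\Omega\mathcal{U}_{\mathrm{loc}}(\mathcal{F})$ under the identifications, rather than merely that the two morphisms have equivalent source and target; tracing this through the proofs of Theorems~\ref{thm:quotient} and~\ref{thm:loop} is the delicate step, while the rest is formal manipulation in $\mathcal{M}_{\mathrm{loc}}$.
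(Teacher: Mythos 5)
Your argument is correct and is essentially the intended one: the paper states this corollary without proof, as an immediate consequence of Proposition~\ref{prop:categorificationofcofiber} applied to $\theta(\mathcal{F})$ together with Theorem~\ref{thm:loop} and the identity $\Omega\operatorname{cofib}(g)\simeq\operatorname{fib}(g)$ in the stable $\infty$-category $\mathcal{M}_{\mathrm{loc}}$. You are right to flag that the only real content is the naturality of the equivalence $\mathcal{U}_{\mathrm{loc}}(\theta(-))\simeq\Omega\,\mathcal{U}_{\mathrm{loc}}(-)$, which the paper leaves implicit; your sketch of tracing this through the split cofiber sequences of Theorem~\ref{thm:quotient} is the correct way to supply it.
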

Let $\mathcal{C}\in\mathrm{Cat}^{\mathrm{perf}}(\mathcal{E})$. Define
\[
\mathrm{gr}_n:\mathrm{Gap}([n],\mathcal{C})\to\mathcal{C}^n,X_{\bullet,\bullet}\mapsto(X_{0,1},\cdots,X_{n-1,n})
\]
and $\mathrm{gr}:\mathrm{Gap}(\mathcal{C})\to\operatorname{colim}_n\mathcal{C}^n$ as the colimit $\operatorname{colim}_n\mathrm{gr}_n$. Besides, define some categories of binary sequences by pullbacks
\[
\begin{tikzcd}
    \mathrm{BGap}([n],\mathcal{C})\ar[r]\ar[d]&\mathrm{Gap}([n],\mathcal{C})\ar[d,"\mathrm{gr}_n"]\\
    \mathrm{Gap}([n],\mathcal{C})\ar[r,"\mathrm{gr}_n"]&\mathcal{C}^n
\end{tikzcd},\begin{tikzcd}
    \mathrm{BGap}^w([n],\mathcal{C})\ar[r]\ar[d]&\mathrm{Gap}^w([n],\mathcal{C})\ar[d,"\mathrm{gr}_n"]\\
    \mathrm{Gap}^w([n],\mathcal{C})\ar[r,"\mathrm{gr}_n"]&\mathcal{C}^n
\end{tikzcd}
\]
and
\[
\quad\begin{tikzcd}
    \mathrm{BGap}(\mathcal{C})\ar[r]\ar[d]&\mathrm{Gap}([n],\mathcal{C})\ar[d,"\mathrm{gr}"]\\
    \mathrm{Gap}(\mathcal{C})\ar[r,"\mathrm{gr}"]&\operatorname{colim}_n\mathcal{C}^n
\end{tikzcd},\begin{tikzcd}
    \mathrm{BGap}^w(\mathcal{C})\ar[r]\ar[d]&\mathrm{Gap}(\mathcal{C})\ar[d,"\mathrm{gr}"]\\
    \mathrm{Gap}(\mathcal{C})\ar[r,"\mathrm{gr}"]&\operatorname{colim}_n\mathcal{C}^n
\end{tikzcd}.
\]
Then the identity define diagonal functors $\Delta$.
\begin{thm}\cite[Theorem 1.1]{kasprowski_christoph_2019}\label{thm:loopcofiber}
    Let $\mathcal{C}\in\mathrm{Cat}^{\mathrm{perf}}(\mathcal{E})$. There is a homotopy equivalence
    \[
    \operatorname{cofib}(\mathcal{U}_\mathrm{loc}(\mathrm{Gap}^w(\mathcal{C}))\xrightarrow{\mathcal{U}_\mathrm{loc}(\Delta)} \mathcal{U}_\mathrm{loc}(\mathrm{BGap}^w(\mathcal{C})))\simeq\Omega \mathcal{U}_\mathrm{loc}(\mathcal{C}).
    \]
\end{thm}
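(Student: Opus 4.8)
The plan is to recognize $\mathrm{BGap}^w(\mathcal{C})$ as $\mathrm{Gap}^w$ of the diagonal functor $\delta_\mathcal{C}\colon\mathcal{C}\to\mathcal{C}^2$ and then invoke the loop theorem. The first and most delicate step is an equivalence $\mathrm{BGap}^w(\mathcal{C})\simeq\mathrm{Gap}^w(\delta_\mathcal{C})$ of small stable idempotent-complete $\infty$-categories, compatible with the ind-structures. Levelwise this is visible: a gap sequence valued in $\mathcal{C}^2$ is a pair of gap sequences valued in $\mathcal{C}$, so $\mathrm{Gap}^w([n],\mathcal{C}^2)\simeq\mathrm{Gap}^w([n],\mathcal{C})^2$ and $(\mathcal{C}^2)^n\simeq\mathcal{C}^n\times\mathcal{C}^n$; writing out the pullback defining $\mathrm{Gap}^w([n],\delta_\mathcal{C})=\mathcal{C}^n\times_{(\mathcal{C}^2)^n}\mathrm{Gap}^w([n],\mathcal{C}^2)$, with legs $\delta_\mathcal{C}^n$ and $\mathrm{gr}_n\times\mathrm{gr}_n$, and applying the diagonal pullback identity identifies it with $\mathrm{Gap}^w([n],\mathcal{C})\times_{\mathcal{C}^n}\mathrm{Gap}^w([n],\mathcal{C})=\mathrm{BGap}^w([n],\mathcal{C})$; the tuple $A_\bullet\in\mathcal{C}^n$ labeling an object of $\mathrm{Gap}^w([n],\delta_\mathcal{C})$ records precisely the identification of the two associated gradeds. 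One then checks compatibility with the stabilization maps $i^w_{n,\delta_\mathcal{C}}$ and passes to the filtered colimit, using that finite limits commute with filtered colimits in $\mathrm{Cat}_\infty^{\mathrm{ex}}$.

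Next I would trace the functor $\tau_{\delta_\mathcal{C}}\colon\operatorname{colim}_n\mathcal{C}^{n-1}\to\mathrm{Gap}^w(\delta_\mathcal{C})$ through this identification. By construction $\tau_{n,\delta_\mathcal{C}}(A_1,\dots,A_{n-1})$ has underlying $\mathcal{C}^2$-chain $0\to\delta_\mathcal{C}(A_1)\xrightarrow{0}\cdots\xrightarrow{0}\delta_\mathcal{C}(A_{n-1})\to 0$, whose two coordinates are the \emph{same} split, zero-differential chain $W=(0\to A_1\xrightarrow{0}\cdots\xrightarrow{0}A_{n-1}\to 0)$ in $\mathcal{C}$; hence $\tau_{\delta_\mathcal{C}}$ factors as $\Delta\circ\sigma$, where $\sigma\colon\operatorname{colim}_n\mathcal{C}^{n-1}\to\mathrm{Gap}^w(\mathcal{C})$ sends $(A_1,\dots,A_{n-1})$ to $W$ and $\Delta$ is the diagonal. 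The functor $\sigma$ is a section of the projection $\mathrm{Gap}^w([n],\mathcal{C})\to\mathcal{C}^{n-1}$, $(A_\bullet,X_{\bullet,\bullet})\mapsto(A_1,\dots,A_{n-1})$, up to a unipotent automorphism of $\mathcal{C}^{n-1}$; this projection is an additive equivalence, as follows from the semiorthogonal decomposition of Lemma~\ref{lem:SOD} under $\mathrm{Gap}^w([n],\mathcal{C})\simeq\mathrm{Gap}([n-1],\mathcal{C})$ (equivalently, from Proposition~\ref{prop:additive}). Since $\mathcal{U}_\mathrm{loc}$ is localizing, hence additive, it follows that $\mathcal{U}_\mathrm{loc}(\sigma)$ is an equivalence after passing to the colimit.

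Precomposition with the equivalence $\mathcal{U}_\mathrm{loc}(\sigma)$ does not change cofibers, so
\[
\operatorname{cofib}\big(\mathcal{U}_\mathrm{loc}(\Delta)\big)\simeq\operatorname{cofib}\big(\mathcal{U}_\mathrm{loc}(\tau_{\delta_\mathcal{C}})\big)\simeq\mathcal{U}_\mathrm{loc}(\mathrm{Q}(\delta_\mathcal{C}))=\mathcal{U}_\mathrm{loc}(\theta(\mathcal{C})),
\]
where the middle equivalence combines the definition $\mathrm{Q}(\delta_\mathcal{C})=\mathrm{Cone}(\tau_{\delta_\mathcal{C}})$ with Proposition~\ref{prop:categorificationofcofiber}. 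By Lemma~\ref{lem:diagdevissage} the functor $\delta_\mathcal{C}$ satisfies the weak d\'evissage condition, so Theorem~\ref{thm:loop} (which rests on Theorem~\ref{thm:quotient}) gives $\mathcal{U}_\mathrm{loc}(\theta(\mathcal{C}))\simeq\Omega\mathcal{U}_\mathrm{loc}(\mathcal{C})$, which is the assertion.

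I expect the main obstacle to lie in the first step and the parallel bookkeeping in the second: making the equivalence $\mathrm{BGap}^w(\mathcal{C})\simeq\mathrm{Gap}^w(\delta_\mathcal{C})$ and the factorization $\tau_{\delta_\mathcal{C}}=\Delta\circ\sigma$ fully rigorous requires carefully reconciling the various ``$w$''-truncation conventions, the graded functors $\mathrm{gr}_n$ on the two sides, and the colimit structure maps. Once that is settled, all the homotopy-theoretic content is supplied by the results already established.
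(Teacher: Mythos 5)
Your argument is correct, and it opens with the same key observation as the paper's own proof (given in the remark following Theorem~\ref{thm:squareloop}), namely the identification $\mathrm{BGap}^w(\mathcal{C})\simeq\mathrm{Gap}^w(\delta_\mathcal{C})$. After that the two routes diverge. The paper maps the localization sequence $\mathcal{U}_\mathrm{loc}(\mathrm{Gap}^w(\mathcal{C}))\to\mathcal{U}_\mathrm{loc}(\mathrm{Gap}(\mathcal{C}))\to\mathcal{U}_\mathrm{loc}(\mathcal{C})$ to the corresponding sequence for $\delta_\mathcal{C}$ via $\Delta$, uses additivity (Proposition~\ref{prop:additive}) to see that the middle vertical map is an equivalence, and reads off $\operatorname{cofib}(\mathcal{U}_\mathrm{loc}(\Delta))\simeq\Omega\operatorname{cofib}\bigl(\mathcal{U}_\mathrm{loc}(\mathcal{C})\to\mathcal{U}_\mathrm{loc}(\mathcal{C}^2)\bigr)\simeq\Omega\mathcal{U}_\mathrm{loc}(\mathcal{C})$ from the resulting three-by-three diagram. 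You instead observe that under the identification one has $\tau_{\delta_\mathcal{C}}\simeq\Delta\circ\sigma$ with $\sigma\simeq\tau_{\mathrm{id}_\mathcal{C}}$ inducing an equivalence on $\mathcal{U}_\mathrm{loc}$, so that $\operatorname{cofib}(\mathcal{U}_\mathrm{loc}(\Delta))\simeq\mathcal{U}_\mathrm{loc}(\mathrm{Q}(\delta_\mathcal{C}))$, and then you quote Theorem~\ref{thm:quotient} (via Theorem~\ref{thm:loop}); this is legitimate and non-circular, since Theorems~\ref{thm:quotient} and~\ref{thm:loop} are established independently of the present statement, which is only needed later for the square-loop Theorem~\ref{thm:squareloop}. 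What your route buys is a clean structural explanation of why the cofiber of $\Delta$ is a loop: $\Delta$ \emph{is} the map $\tau_{\delta_\mathcal{C}}$ up to a $\mathcal{U}_\mathrm{loc}$-equivalence, so the statement is literally an instance of the already-proven quotient theorem for the weakly d\'evissage functor $\delta_\mathcal{C}$; the paper's version instead repeats, for this special case, the same three-by-three comparison that underlies the proof of Theorem~\ref{thm:quotient}. One small imprecision to fix in writing this up: $\sigma$ is not a section of the projection up to an automorphism of the category $\mathcal{C}^{n-1}$ --- the composite $(A_1,\dots,A_{n-1})\mapsto(A_1,A_2\lor\Sigma A_1,\dots)$ is only invertible after applying an additive invariant --- but this is exactly the assertion the paper already records about $q^w_{n,\mathcal{F}}\circ\tau_{n,\mathcal{F}}$, so it is a matter of phrasing rather than a gap.
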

$\mathrm{BGap}^w(\mathcal{C})$ consisting of
\begin{enumerate}[label=(\arabic*)]
    \item Objects are $2$-tuples $(X_{\bullet,\bullet},Y_{\bullet,\bullet})$, where $X_{\bullet,\bullet},Y_{\bullet,\bullet}\in\mathrm{Gap}^w(\mathcal{C})$ and $:X_{i-1,i}=Y_{i-1,i}$.
    \item Morphisms between $(X_{\bullet,\bullet},Y_{\bullet,\bullet})$ and $(X'_{\bullet,\bullet},Y'_{\bullet,\bullet})$ are $2$-tuples $(f_{\bullet,\bullet}:X_{\bullet,\bullet}\to X'_{\bullet,\bullet},g_{\bullet,\bullet}:Y_{\bullet,\bullet}\to Y'_{\bullet,\bullet})$ of morphisms in $\mathrm{Gap}^w(\mathcal{C})$ such that $g_{i-1,i}\simeq f_{i-1,i}$ for all $i\geq 1$.
\end{enumerate}
And, $\Delta(X_{\bullet,\bullet})=(X_{\bullet,\bullet},X_{\bullet,\bullet})$.
\begin{prop}\label{prop:binarydevissage}
    $\Delta$ satisfies the weak d\'evissage condition.
\end{prop}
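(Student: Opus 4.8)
The goal, by Definition \ref{def:weakdevissage}, is to show $\mathrm{Idem}(\mathrm{E}(\Delta))=\mathrm{BGap}^w(\mathcal{C})$. Both $\mathrm{Idem}(\mathrm{E}(\Delta))$ and $\mathrm{BGap}^w(\mathcal{C})$ are compatible with the filtered colimit over $n$ defining $\mathrm{BGap}^w(\mathcal{C})$, so it suffices to show that every object $(X_{\bullet,\bullet},Y_{\bullet,\bullet})\in\mathrm{BGap}^w([n],\mathcal{C})$ lies in the thick closure of $\Delta\bigl(\mathrm{Gap}^w([n],\mathcal{C})\bigr)$; here $X_{\bullet,\bullet},Y_{\bullet,\bullet}\in\mathrm{Gap}^w([n],\mathcal{C})$ share a common associated graded, $A_i:=X_{i-1,i}\simeq Y_{i-1,i}$ for $1\le i\le n$.

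The plan is to follow the scheme of Lemma \ref{lem:diagdevissage}, then correct it using the associated-graded filtration. First, as there, $(X_{\bullet,\bullet},Y_{\bullet,\bullet})$ is a direct factor of $(X_{\bullet,\bullet},Y_{\bullet,\bullet})\oplus(Y_{\bullet,\bullet},X_{\bullet,\bullet})\simeq(X_{\bullet,\bullet}\oplus Y_{\bullet,\bullet},\,Y_{\bullet,\bullet}\oplus X_{\bullet,\bullet})$, which still lies in $\mathrm{BGap}^w([n],\mathcal{C})$ since both components have associated graded $(A_i\oplus A_i)_{1\le i\le n}$; as $\mathrm{Idem}(\mathrm{E}(\Delta))$ is a thick subcategory (Remark after Definition \ref{def:weakdevissage}), it is enough to place this direct sum in $\mathrm{E}(\Delta)$. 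Secondly, I would filter: by the weak analogue of Proposition \ref{prop:SOD}, $X_{\bullet,\bullet}$ and $Y_{\bullet,\bullet}$ each admit a filtration inside $\mathrm{Gap}^w([n],\mathcal{C})$ whose successive cofibers are elementary gap objects determined by the common pieces $A_i$; assembling the two filtrations componentwise yields a filtration of $(X_{\bullet,\bullet}\oplus Y_{\bullet,\bullet},\,Y_{\bullet,\bullet}\oplus X_{\bullet,\bullet})$ whose successive cofibers are diagonal objects $\Delta(E_i)$, with each $E_i\in\mathrm{Gap}^w([n],\mathcal{C})$ elementary on $A_i\oplus A_i$, and closure of $\mathrm{E}(\Delta)$ under extensions then concludes.

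The delicate point — and the main obstacle — is that $\mathrm{BGap}^w(\mathcal{C})$ is a homotopy pullback, so the identification of the two associated gradeds is genuine data: the filtrations of $X_{\bullet,\bullet}$ and $Y_{\bullet,\bullet}$ refine their common graded by possibly inequivalent extensions, their connecting maps disagree, so the componentwise-assembled filtration need not respect the binary structure; concretely, the naive equivalence $(X_{\bullet,\bullet}\oplus Y_{\bullet,\bullet},\,Y_{\bullet,\bullet}\oplus X_{\bullet,\bullet})\simeq\Delta(X_{\bullet,\bullet}\oplus Y_{\bullet,\bullet})$ breaks down because on graded pieces it would require the swap of $A_i\oplus A_i$ to be homotopic to the identity. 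The fix I would pursue is to absorb this twist: after the flip the discrepancy becomes an off-diagonal automorphism of $(A_i\oplus A_i)_{i}$, and one can realise it by an honest automorphism of a single gap object obtained by concatenating the filtration of $X_{\bullet,\bullet}$ with the reversed filtration of $Y_{\bullet,\bullet}$ glued along the chosen identification (enlarging $n$ as needed); this presents the twisted object as $\Delta$ of that gap object and hence in $\mathrm{E}(\Delta)$. Executed with care this gives $\mathrm{E}(\Delta)=\mathrm{BGap}^w(\mathcal{C})$, i.e. the unweakened d\'evissage condition, whence the weak one.
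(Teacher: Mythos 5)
Your opening move is exactly the paper's entire proof: exhibit $(X_{\bullet,\bullet},Y_{\bullet,\bullet})$ as a direct factor of $(X_{\bullet,\bullet}\oplus Y_{\bullet,\bullet},\,Y_{\bullet,\bullet}\oplus X_{\bullet,\bullet})$ and identify the latter with $\Delta(X_{\bullet,\bullet}\oplus Y_{\bullet,\bullet})$. The objection you then raise against this identification is genuine, and it applies verbatim to the paper's own argument: unlike in Lemma \ref{lem:diagdevissage}, where the target $\mathcal{C}^2$ is a plain product and the pair $(\mathrm{id},\mathrm{swap})$ is automatically a morphism, a morphism in $\mathrm{BGap}^w(\mathcal{C})$ carries the extra datum of a homotopy between its two components on associated gradeds. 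For the pair $(\mathrm{id},\sigma)$ this would require the swap of $A_i\oplus A_i$ to be homotopic to the identity, which fails already in $\mathcal{D}^b(k)$; and any other candidate pair $(u,v)$ forces the existence of filtered maps $X_{\bullet,\bullet}\to Y_{\bullet,\bullet}$ and $Y_{\bullet,\bullet}\to X_{\bullet,\bullet}$ inducing the chosen identification on gradeds, which is precisely the datum a general binary object lacks. So you have correctly located the point at which more than a one-line argument is needed.

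Your repair, however, does not close this gap. The filtration in your second paragraph cannot be run inside $\mathrm{Gap}^w$: the intermediate stages of the semiorthogonal filtration of Lemma \ref{lem:SOD} are not acyclic, and you retract that paragraph yourself. The final paragraph's ``concatenate the filtration of $X_{\bullet,\bullet}$ with the reversed filtration of $Y_{\bullet,\bullet}$ glued along the chosen identification'' is not a defined operation on objects of $\mathrm{Gap}^w(\mathcal{C})$, and the assertion that the resulting off-diagonal twist ``can be realised by an honest automorphism of a single gap object'' is a restatement of the original difficulty rather than a resolution of it: producing such an automorphism again amounts to producing filtered maps between $X_{\bullet,\bullet}$ and $Y_{\bullet,\bullet}$ lifting the identification of gradeds. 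As it stands, your proposal proves exactly what the paper proves, together with a correct diagnosis of why that is not yet enough; what is still missing is an actual construction presenting $(X_{\bullet,\bullet}\oplus Y_{\bullet,\bullet},\,Y_{\bullet,\bullet}\oplus X_{\bullet,\bullet})$, or some further stabilization of it, as an iterated extension of objects of $\Delta(\mathrm{Gap}^w(\mathcal{C}))$ up to direct factors.
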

\begin{proof}
For $(X_{\bullet,\bullet},Y_{\bullet,\bullet})\in \mathrm{BGap}^w(\mathcal{C})$, we have
\[
(X_{\bullet,\bullet}\oplus Y_{\bullet,\bullet}, Y_{\bullet,\bullet}\oplus X_{\bullet,\bullet})\simeq (X_{\bullet,\bullet}\oplus Y_{\bullet,\bullet},X_{\bullet,\bullet}\oplus Y_{\bullet,\bullet})\in\mathrm{E}(\Delta),
\]
Then $(X_{\bullet,\bullet},Y_{\bullet,\bullet})$ is a direct factor of $(X_{\bullet,\bullet}\oplus Y_{\bullet,\bullet}, Y_{\bullet,\bullet}\oplus X_{\bullet,\bullet})$, which means it is in $\mathrm{Idem}(\mathrm{E}(\Delta))$. Hence, $\Delta$ satisfies the weak d\'evissage condition.
\end{proof}

Denote $\Theta(\mathcal{C}):=\mathrm{Q}(\Delta)$. This define a functor $\Theta:\mathrm{Cat}^{\mathrm{perf}}\to \mathrm{Cat}^{\mathrm{perf}}$. Then we get the square loop theorem.
\begin{thm}\label{thm:squareloop}
    Let $\mathcal{C}\in\mathrm{Cat}^{\mathrm{perf}}$ be a small stable idempotent-complete $\infty$-category. Then we have a homotopy equivalence
    \[
    \mathcal{U}_\mathrm{loc}(\Theta(\mathcal{C}))\simeq\Omega^2 \mathcal{U}_\mathrm{loc}(\mathcal{C}).
    \]
    Besides, $\Theta$ is functorial.
\end{thm}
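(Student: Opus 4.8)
The plan is to deduce this from Theorem~\ref{thm:quotient} applied to the diagonal functor $\Delta\colon\mathrm{Gap}^w(\mathcal{C})\to\mathrm{BGap}^w(\mathcal{C})$, combined with the cofiber computation of Theorem~\ref{thm:loopcofiber}. First I would check that $\Delta$ is genuinely a morphism in $\mathrm{Cat}_\infty^{\mathrm{perf}}$: the source $\mathrm{Gap}^w(\mathcal{C})$ is stable and idempotent-complete by the preservation properties recorded after the definition of $\mathrm{Gap}^w$, and the target $\mathrm{BGap}^w(\mathcal{C})$ is a finite limit in $\mathrm{Cat}_\infty^{\mathrm{ex}}$ of idempotent-complete stable $\infty$-categories along exact functors, hence idempotent-complete because the inclusion $\mathrm{Cat}_\infty^{\mathrm{perf}}\hookrightarrow\mathrm{Cat}_\infty^{\mathrm{ex}}$ is left exact. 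By Proposition~\ref{prop:binarydevissage}, $\Delta$ satisfies the weak d\'evissage condition, so Theorem~\ref{thm:quotient} applies and gives
\[
\mathcal{U}_{\mathrm{loc}}(\Theta(\mathcal{C}))=\mathcal{U}_{\mathrm{loc}}(\mathrm{Q}(\Delta))\simeq\mathrm{fib}\bigl(\mathcal{U}_{\mathrm{loc}}(\Delta)\bigr).
\]

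Next, since $\mathcal{U}_{\mathrm{loc}}$ takes values in a stable $\infty$-category, the fiber of $\mathcal{U}_{\mathrm{loc}}(\Delta)$ is equivalent to $\Omega$ of its cofiber. Theorem~\ref{thm:loopcofiber} identifies $\mathrm{cofib}\bigl(\mathcal{U}_{\mathrm{loc}}(\Delta)\bigr)\simeq\Omega\,\mathcal{U}_{\mathrm{loc}}(\mathcal{C})$, so chaining these equivalences yields
\[
\mathcal{U}_{\mathrm{loc}}(\Theta(\mathcal{C}))\simeq\Omega\,\mathrm{cofib}\bigl(\mathcal{U}_{\mathrm{loc}}(\Delta)\bigr)\simeq\Omega\,\Omega\,\mathcal{U}_{\mathrm{loc}}(\mathcal{C})=\Omega^{2}\mathcal{U}_{\mathrm{loc}}(\mathcal{C}),
\]
which is the asserted equivalence. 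This is the exact analogue of the way Theorem~\ref{thm:loop} is extracted from Theorem~\ref{thm:quotient}, with one additional application of the stability of the target of $\mathcal{U}_{\mathrm{loc}}$.

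For functoriality, I would observe that each construction entering $\Theta$ is functorial in $\mathcal{C}$: the assignments $\mathcal{C}\mapsto\mathrm{Gap}^w(\mathcal{C})$ and $\mathcal{C}\mapsto\mathrm{BGap}^w(\mathcal{C})$ are functors $\mathrm{Cat}_\infty^{\mathrm{perf}}\to\mathrm{Cat}_\infty^{\mathrm{perf}}$, being built from functor categories and finite limits; $\Delta$ is a natural transformation between them; and $\mathrm{Q}(-)=\mathrm{Cone}(\tau_{(-)})$ is functorial in its input functor, since $\tau_{(-)}$ and the cone construction of Proposition~\ref{prop:categorificationofcofiber} (the lex pullback together with idempotent completion) are given by explicit functorial recipes. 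Composing, $\Theta=\mathrm{Cone}(\tau_{\Delta})$ is a well-defined endofunctor of $\mathrm{Cat}_\infty^{\mathrm{perf}}$.

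I expect the main point requiring care is not the formal chain of equivalences, which is short, but the precise sense of functoriality: one must assemble $\Delta$, $\tau$, and the cone construction into compatible functors on $\mathrm{Fun}(\Delta^1,\mathrm{Cat}_\infty^{\mathrm{perf}})$, and verify that the equivalence of Theorem~\ref{thm:loopcofiber}, which is imported from external work, is natural enough that $\mathcal{U}_{\mathrm{loc}}\circ\Theta\simeq\Omega^{2}\circ\mathcal{U}_{\mathrm{loc}}$ holds as an equivalence of functors rather than merely objectwise. If only the objectwise statement is needed, this concern is moot and the argument reduces to a two-line composition of the cited results.
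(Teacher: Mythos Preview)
Your proposal is correct and follows exactly the same route as the paper: apply Theorem~\ref{thm:quotient} to $\Delta$ via Proposition~\ref{prop:binarydevissage}, then use stability to convert the fiber to $\Omega$ of the cofiber and invoke Theorem~\ref{thm:loopcofiber}. Your additional remarks on the idempotent-completeness of the categories involved and on functoriality are more detailed than what the paper spells out, but the core argument is identical.
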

\begin{proof}
    By Theorem \ref{thm:quotient} and Proposition \ref{prop:binarydevissage}, we have a homotopy equivalence
    \[
    \mathcal{U}_\mathrm{loc}(\Theta(\mathcal{C}))\simeq\operatorname{fib}(\mathcal{U}_\mathrm{loc}(\mathrm{Gap}^w(\mathcal{C}))\xrightarrow{\mathcal{U}_\mathrm{loc}(\Delta)} \mathcal{U}_\mathrm{loc}(\mathrm{BGap}^w(\mathcal{C}))).
    \]
    By Theorem \ref{thm:loopcofiber}, we have
    \[
    \operatorname{fib}(\mathcal{U}_\mathrm{loc}(\mathrm{Gap}^w(\mathcal{C}))\xrightarrow{\mathcal{U}_\mathrm{loc}(\Delta)} \mathcal{U}_\mathrm{loc}(\mathrm{BGap}^w(\mathcal{C})))\simeq \Omega\operatorname{cofib}(\mathcal{U}_\mathrm{loc}(\mathrm{Gap}^w(\mathcal{C}))\xrightarrow{\mathcal{U}_\mathrm{loc}(\Delta)} \mathcal{U}_\mathrm{loc}(\mathrm{BGap}^w(\mathcal{C})))\simeq\Omega^2\mathcal{U}_\mathrm{loc}(\mathcal{C}).
    \]
    Hence, we have a homotopy equivalence
    \[
    \mathcal{U}_\mathrm{loc}(\Theta(\mathcal{C}))\simeq\Omega^2 \mathcal{U}_\mathrm{loc}(\mathcal{C}).
    \]
\end{proof}
\begin{remark}
    \cite{Maxime_Vladimir_Christoph_2024} gives a functor $\Gamma:\mathrm{Cat}^{\mathrm{perf}}\to \mathrm{Cat}^{\mathrm{perf}}$ with $\mathcal{U}_\mathrm{loc}(\Gamma(\mathcal{C}))\simeq \Omega \mathcal{U}_\mathrm{loc}(\mathcal{C})$: $\Gamma(\mathcal{C}):=\mathrm{Cone}(\Delta)$. It also use Proposition \ref{prop:categorificationofcofiber} and Theorem \ref{thm:loopcofiber}.
\end{remark}
Denote by $\mathrm{Calk}(\mathcal{C}):=\mathrm{Idem}(\mathrm{Ind}(\mathcal{C})^{\omega_1}/\mathcal{C})$ the $\omega_1$-small Calkin category of $\mathcal{C}$. Then $\mathcal{U}_\mathrm{loc}(\mathrm{Calk}(\mathcal{C}))\simeq \Sigma \mathcal{U}_\mathrm{loc}(\mathcal{C})$ since $\mathrm{Ind}(\mathcal{C})^{\omega_1}$ admits an Eilenberg swindle.
\begin{cor}
    Let $\mathcal{C}\in\mathrm{Cat}^{\mathrm{perf}}(\mathcal{E})$. Then
    \[
    \mathcal{U}_\mathrm{loc}(\Theta\circ \mathrm{Calk}(\mathcal{C}))\simeq\Omega \mathcal{U}_\mathrm{loc}(\mathcal{C})\simeq \mathcal{U}_\mathrm{loc}(\mathrm{Calk}\circ \Theta(\mathcal{C}))
    \]
\end{cor}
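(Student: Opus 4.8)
The plan is to simply chain together the two structural equivalences established immediately above: the square loop theorem (Theorem \ref{thm:squareloop}), which gives $\mathcal{U}_\mathrm{loc}(\Theta(\mathcal{D}))\simeq\Omega^2\mathcal{U}_\mathrm{loc}(\mathcal{D})$ naturally in $\mathcal{D}\in\mathrm{Cat}_\infty^{\mathrm{perf}}$, and the Calkin equivalence $\mathcal{U}_\mathrm{loc}(\mathrm{Calk}(\mathcal{D}))\simeq\Sigma\mathcal{U}_\mathrm{loc}(\mathcal{D})$, valid since $\mathrm{Ind}(\mathcal{D})^{\omega_1}$ admits an Eilenberg swindle. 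Both $\Theta$ and $\mathrm{Calk}$ are functors $\mathrm{Cat}_\infty^{\mathrm{perf}}\to\mathrm{Cat}_\infty^{\mathrm{perf}}$, so the composites $\Theta\circ\mathrm{Calk}$ and $\mathrm{Calk}\circ\Theta$ are again endofunctors of $\mathrm{Cat}_\infty^{\mathrm{perf}}$ and the equivalences above may be applied to them.

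First I would compute $\mathcal{U}_\mathrm{loc}(\Theta\circ\mathrm{Calk}(\mathcal{C}))\simeq\Omega^2\mathcal{U}_\mathrm{loc}(\mathrm{Calk}(\mathcal{C}))\simeq\Omega^2\Sigma\,\mathcal{U}_\mathrm{loc}(\mathcal{C})$, applying Theorem \ref{thm:squareloop} to $\mathcal{D}=\mathrm{Calk}(\mathcal{C})$ and then the Calkin equivalence to $\mathcal{D}=\mathcal{C}$. Symmetrically, $\mathcal{U}_\mathrm{loc}(\mathrm{Calk}\circ\Theta(\mathcal{C}))\simeq\Sigma\,\mathcal{U}_\mathrm{loc}(\Theta(\mathcal{C}))\simeq\Sigma\Omega^2\,\mathcal{U}_\mathrm{loc}(\mathcal{C})$, applying the Calkin equivalence to $\mathcal{D}=\Theta(\mathcal{C})$ and then Theorem \ref{thm:squareloop} to $\mathcal{D}=\mathcal{C}$.

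Finally, since $\mathcal{M}_\mathrm{loc}$ is a stable $\infty$-category, the suspension $\Sigma$ and loop $\Omega$ functors on it are mutually inverse equivalences; hence $\Omega^2\Sigma\simeq\Omega$ and $\Sigma\Omega^2\simeq\Omega$ as endofunctors of $\mathcal{M}_\mathrm{loc}$. Substituting, both $\mathcal{U}_\mathrm{loc}(\Theta\circ\mathrm{Calk}(\mathcal{C}))$ and $\mathcal{U}_\mathrm{loc}(\mathrm{Calk}\circ\Theta(\mathcal{C}))$ are canonically equivalent to $\Omega\,\mathcal{U}_\mathrm{loc}(\mathcal{C})$, which is the claim.

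There is essentially no obstacle here beyond bookkeeping: the only point requiring a word of care is the universe issue flagged in the earlier remark (the Calkin construction uses $\mathrm{Ind}(-)^{\omega_1}$ and one works in an enlarged universe), but this does not affect the chain of equivalences since $\mathcal{U}_\mathrm{loc}$ is defined on all the relevant (enlarged-universe) small stable $\infty$-categories and the Eilenberg-swindle argument is unchanged. One could additionally remark that the two displayed equivalences are compatible, i.e. the evident comparison map between $\Theta\circ\mathrm{Calk}$ and $\mathrm{Calk}\circ\Theta$ realizes the identity of $\Omega\,\mathcal{U}_\mathrm{loc}(\mathcal{C})$, but this is not needed for the stated corollary.
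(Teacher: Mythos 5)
Your proposal is correct and matches the intended argument: the corollary is stated in the paper without proof precisely because it follows by composing Theorem \ref{thm:squareloop} with the Calkin equivalence $\mathcal{U}_\mathrm{loc}(\mathrm{Calk}(\mathcal{D}))\simeq\Sigma\,\mathcal{U}_\mathrm{loc}(\mathcal{D})$ in the two orders and using that $\Omega$ and $\Sigma$ are mutually inverse on the stable $\infty$-category $\mathcal{M}_\mathrm{loc}$. Your remark on the universe issue for $\mathrm{Ind}(-)^{\omega_1}$ is the right caveat and is handled exactly as in the paper's earlier remark.
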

\begin{remark}
    \cite{hennion_2017} defines $\mathrm{Tate}(\mathcal{C})$ with $K(\mathrm{Tate}(\mathcal{C}))\simeq \Sigma K(\mathcal{C})$. So, we can also use $\mathrm{Tate}$ here.
\end{remark}
\begin{remark}
In fact, we can give a simple proof of Theorem \ref{thm:loopcofiber} now.
\begin{proof}[A proof of Theorem \ref{thm:loopcofiber}]
    Firstly, observation shows that $\mathrm{BGap^w(\mathcal{C})}\simeq \mathrm{Gap}^w(\delta_\mathcal{C})$. Then we have a diagram of cofiber sequences
    \[
    \begin{tikzcd}
        \mathcal{U}_\mathrm{loc}(\mathrm{Gap}^w(\mathcal{C}))\ar[r]\ar[d,"\mathcal{U}_\mathrm{loc}(\Delta)"]&\mathcal{U}_\mathrm{loc}(\mathrm{Gap}(\mathcal{C}))\ar[r]\ar[d,"\mathcal{U}_\mathrm{loc}(\Delta)"]&\mathcal{U}_\mathrm{loc}(\mathcal{C})\ar[d,"\mathcal{U}_\mathrm{loc}(\Delta)"]\\
\mathcal{U}_\mathrm{loc}(\mathrm{Gap}^w(\delta_\mathcal{C}))\ar[r]&\mathcal{U}_\mathrm{loc}(\mathrm{Gap}(\delta_\mathcal{C}))\ar[r]&\mathcal{U}_\mathrm{loc}(\mathcal{C}^2)
    \end{tikzcd}
    \]
    by Proposition \ref{prop:fullyfaithfulofev}. The middle $\mathcal{U}_\mathrm{loc}(\Delta)$ is a homotopy equivalence because the second map and entire map in
    \[
    \mathcal{U}_\mathrm{loc}(\mathrm{Gap}(\mathcal{C}))\xrightarrow{\mathcal{U}_\mathrm{loc}(\Delta)}\mathcal{U}_\mathrm{loc}(\mathrm{Gap}(\mathcal{C}^2,\mathcal{C}))\to \mathcal{U}_\mathrm{loc}(\operatorname{colim}_n\mathcal{C}^n)
    \]
    are homotopy equivalences from Proposition \ref{prop:additive}. Then by Lemma \ref{lem:diagram1}, there is a cofiber sequence
    \[
    \operatorname{cofib}(\mathcal{U}_\mathrm{loc}(\mathrm{Gap}^w(\mathcal{C}))\xrightarrow{\mathcal{U}_\mathrm{loc}(\Delta)} \mathcal{U}_\mathrm{loc}(\mathrm{BGap}^w(\mathcal{C})))\to 0\to \operatorname{cofib}(\mathcal{U}_\mathrm{loc}(\mathcal{C})\xrightarrow{\mathcal{U}_\mathrm{loc}(\Delta)} \mathcal{U}_\mathrm{loc}(\mathcal{C}^2))\simeq \mathcal{U}_\mathrm{loc}(\mathcal{C}).
    \]
    Hence,
    \[
    \operatorname{cofib}(\mathcal{U}_\mathrm{loc}(\mathrm{Gap}^w(\mathcal{C}))\xrightarrow{\mathcal{U}_\mathrm{loc}(\Delta)} \mathcal{U}_\mathrm{loc}(\mathrm{BGap}^w(\mathcal{C})))\simeq\Omega \mathcal{U}_\mathrm{loc}(\mathcal{C}).
    \]
\end{proof}
\end{remark}
\subsection{Remarks on Raptis' d\'evissage theorem}
Ideas of Section 2 and Section 3 are from \cite{Raptis_2018}, which gives a d\'evissage-type theorem for Waldhausen's K-theory \cite{Waldhausen_1985}. Let's recall some notation from this.
\begin{definition}
    Let $\mathcal{C}$ be a Waldhausen category and $\mathcal{A}\subseteq\mathcal{C}$ be a Waldhausen subcategory. We say $(\mathcal{C},\mathcal{A})$ satisfies the \emph{d\'evissage condition} if for every morphism $f:X\to Y$ in $\mathcal{C}$, there is a weak equivalence $g:Y\xrightarrow{\sim}Y'$, $gf:X\to Y'$ admits a factorization
    \[
    X=X_0\rightarrowtail X_1\rightarrowtail\cdots\rightarrowtail X_m\xrightarrow{\sim} Y',
    \]
    where $X_i/X_{i-1}\in\mathscr{A}$ for all $i\geq 1$.
\end{definition}
\begin{definition}
    Let $\mathscr{C}$ be a Waldhausen category. Define $S_n\mathscr{C}$ be the full subcategory of $\mathrm{Fun}([n],\mathscr{C})$ spanned by filtrations
    \[
    X:0=X_0\rightarrowtail\cdots\rightarrowtail X_n.
    \]
    It is also a Waldhausen category. Let $\mathcal{A}\subseteq\mathcal{C}$ be a Waldhausen subcategory. Define $S_n\mathcal{C}_\mathcal{A}$ by the subcategory of $S_n\mathcal{C}$:
    \begin{itemize}
        \item Objects are $X$ such that $X_i/X_{i-1}$ are objects in $\mathcal{A}$ for all $i\geq 1$.
        \item Morphisms are $X\to Y$ such that $X_i/X_{i-1}\to Y_i/Y_{i-1}$ are morphisms in $\mathcal{A}$ for all $i\geq 1$.
    \end{itemize}
\end{definition}
The map $[n+1]\to [n],n+1\mapsto n$ induces $S_n\mathcal{C}\to S_{n+1}\mathcal{C}$ and $S_n\mathcal{C}_\mathcal{A}\to S_{n+1}\mathcal{C}_\mathcal{A}$. Thus, we can define
\[
S_\infty\mathcal{C}:=\operatorname{colim}_nS_n\mathcal{C},S_\infty\mathcal{C}_\mathcal{A}:=\operatorname{colim}_nS_n\mathcal{C}_\mathcal{A}.
\]
\begin{definition}
    For $n\in\mathbb{N}$, define $S_n^w\mathcal{C}_\mathcal{A}$ be the full subcategory of $S_n\mathcal{C}_\mathcal{A}$ spanned by objects $X$ such that $0\to X_n$ is weak equivalence. Define $S^w_\infty\mathcal{C}_\mathcal{A}:=\operatorname{colim}_nS_n^w\mathcal{C}_\mathcal{A}$.
\end{definition}
Then we have a table of correspondence between section 2,3 in this article and \cite{Raptis_2018}.
\begin{table}[h!]
    \centering\small
    \begin{tabular}{ |l|l|l|l|  }
 \hline
 \multicolumn{2}{|p{7cm}|}{Exact functor $\mathcal{F}:\mathcal{A}\to\mathcal{C}$ between idempotent-complete stable $\infty$-categories satisfties the d\'evissage condition}& \multicolumn{2}{|p{7cm}|}{A pair of a good Waldhausen category $\mathcal{C}$ and good Waldhausen subcategory $\mathcal{A}$ satisfties the d\'evissage condition}\\
 \hline
 Result & Label & Result & label\\
 \hline
 $K(\mathrm{Gap}(\mathcal{F}))\simeq\operatorname{colim}_nK(\mathcal{A})^n$&Proposition \ref{prop:additive}&$K(S_\infty\mathcal{C}_\mathcal{A})\simeq \operatorname{colim}_nK(\mathcal{A})^n$&\cite[Proposition 3.2]{Raptis_2018}\\
 \hline
 $K(\mathrm{Gap}^w(\mathcal{F}))\to K(\mathrm{Gap}(\mathcal{F}))\to K(\mathcal{C})$&Proposition \ref{prop:fullyfaithfulofev}&$K(S^w_\infty\mathcal{C}_\mathcal{A})\to K(S\mathcal{C}_\mathcal{A})\to K(\mathcal{C})$&\cite[Proposition 3.7,5.5]{Raptis_2018}\\
 \hline
 $K(\mathrm{Q}(\mathcal{F}))\simeq \operatorname{fib}K(\mathcal{F})$&Theorem \ref{thm:quotient}&Raptis' d\'evissage theorem&\cite[Theorem 5.7]{Raptis_2018}\\
 \hline
\end{tabular}
\end{table}
\section{Fillability}
In this section, we assume $\mathcal{F}:\mathcal{A}\to\mathcal{C}$ is an $\mathcal{E}$-linear functor in $\mathrm{Cat}^\mathrm{perf}(\mathcal{E})$ for $\mathcal{E}\in\mathrm{CAlg}^\mathrm{rig}(\mathrm{Cat}^\mathrm{perf})$.
\subsection{$1$-fillable functors}
Denote by $\mathrm{Gap}^\lor(\mathcal{F})$ the pullback
\[
\begin{tikzcd}
    \mathrm{Gap}^\lor(\mathcal{F})\ar[r]\ar[d]&\mathrm{Gap}(\mathcal{F})\ar[d,"\mathrm{ev}_\mathcal{F}"]\\
    \mathrm{Gap}(\mathcal{F})\ar[r,"\mathrm{ev}_\mathcal{F}"]&\mathcal{C}
\end{tikzcd}
\]
Hence, it is also $\mathcal{E}$-linear. Denote  $\mathrm{Gap}^\lor(\mathcal{C}):=\mathrm{Gap}^\lor(\mathrm{id}_\mathcal{C})$.

Define an $\mathcal{E}$-linear functor $\mathrm{Gap}_\mathcal{F}:\mathrm{Gap}(\mathcal{A})\to \mathrm{Gap}(\mathcal{F})$ by $\mathrm{Gap}(\mathcal{A})\to\mathrm{Gap}(\mathcal{C}),X_{\bullet,\bullet}\mapsto\mathcal{F}(X_{\bullet,\bullet})$ and $\mathrm{gr}:\mathrm{Gap}(\mathcal{A})\to\operatorname{colim}_n\mathcal{A}^n$ in last section. Then it induces an $\mathcal{E}$-linear functor $\mathrm{Gap}^\lor_\mathcal{F}:\mathrm{Gap}^\lor(\mathcal{A})\to \mathrm{Gap}^\lor(\mathcal{F})$.

\begin{definition}
We call \( \mathcal{F} \) \emph{$1$-fillable} if $\mathrm{Gap}^\lor_\mathcal{F}$ satisfies the d\'evissage condition.
\end{definition}
Define $\mathcal{E}$-linear functors
\[
\tau_\perp:\mathrm{Gap}^w(\mathcal{F})\to\mathrm{Gap}^\lor(\mathcal{F}),(A_\bullet,X_{\bullet,\bullet})\mapsto ((A_\bullet,X_{\bullet,\bullet}),(0_\bullet,0_{\bullet,\bullet}))
\]
and
\[
p_\dashv:\mathrm{Gap}^\lor(\mathcal{F})\to\mathrm{Gap}(\mathcal{F}),((A_\bullet,X_{\bullet,\bullet}),(B_\bullet,Y_{\bullet,\bullet}))\mapsto (B_\bullet,Y_{\bullet,\bullet}).
\]
\begin{prop}\label{prop:1fill}
    \[
    \mathrm{Gap}^w(\mathcal{F})\xrightarrow{\tau_\perp}\mathrm{Gap}^\lor(\mathcal{F})\xrightarrow{p_\dashv}\mathrm{Gap}(\mathcal{F})
    \]
    is an exact sequence. Besides, the cofiber sequence
    \[
     K(\mathrm{Gap}^w(\mathcal{F}))\xrightarrow{K(\tau_\perp)}K(\mathrm{Gap}^\lor(\mathcal{F}))\xrightarrow{K(p_\dashv)}K(\mathrm{Gap}(\mathcal{F}))
    \]
    split.
\end{prop}
\begin{proof}
    By \cite[Proposition 5.15]{blumberg_gepner_tabuada_2013}, we only need to prove the exact sequence in the sense of homotopy categories.
    
    For two objects $(A_\bullet,X_{\bullet,\bullet}),(B_\bullet,Y_{\bullet,\bullet})\in \mathrm{Gap}^w(\mathcal{F})$, we have
    \[
    \begin{aligned}
        \mathrm{Map}_{\mathrm{Gap}^\lor(\mathcal{F})}(\tau_\perp(A_\bullet,X_{\bullet,\bullet}),\tau_\perp(B_\bullet,Y_{\bullet,\bullet}))&\simeq\mathrm{Map}_{\mathrm{Gap}(\mathcal{F})}((A_\bullet,X_{\bullet,\bullet}),(B_\bullet,Y_{\bullet,\bullet}))\prod_{\mathrm{Map}_\mathcal{C}(0,0)}\mathrm{Map}_{\mathrm{Gap}(\mathcal{F})}(0,0)\\
        &\simeq\mathrm{Map}_{\mathrm{Gap}(\mathcal{F})}((A_\bullet,X_{\bullet,\bullet}),(B_\bullet,Y_{\bullet,\bullet}))\\
        &\simeq \mathrm{Map}_{\mathrm{Gap}^w(\mathcal{F})}((A_\bullet,X_{\bullet,\bullet}),(B_\bullet,Y_{\bullet,\bullet}))
    \end{aligned}
    \]
    Hence, $\tau_\perp$ is fully faithful. Since $p_\dashv$ admits a section
    \[
    \Delta:\mathrm{Gap}(\mathcal{F})\to \mathrm{Gap}^\lor(\mathcal{F}),(A_\bullet,X_{\bullet,\bullet})\mapsto ((A_\bullet,X_{\bullet,\bullet}),(A_\bullet,X_{\bullet,\bullet})),
    \]
    $p_\dashv$ is essentially surjective on both objects and morphisms. Let $((A_\bullet,X_{\bullet,\bullet}),(B_\bullet,Y_{\bullet,\bullet}))\in\mathrm{Gap}^\lor(\mathcal{F})$, there are two morphisms $(A_\bullet,X_{\bullet,\bullet})\to (C_\bullet,Z_{\bullet,\bullet}),(B_\bullet,Y_{\bullet,\bullet})\to (C_\bullet,Z_{\bullet,\bullet})$ which are stable at identities from Proposition \ref{prop:fullyfaithfulofev}. Hence, there are two morphisms
    \[
    ((A_\bullet,X_{\bullet,\bullet}),(B_\bullet,Y_{\bullet,\bullet}))\to((C_\bullet,Z_{\bullet,\bullet}),(B_\bullet,Y_{\bullet,\bullet})),((B_\bullet,Y_{\bullet,\bullet}),(B_\bullet,Y_{\bullet,\bullet}))\to((C_\bullet,Z_{\bullet,\bullet}),(B_\bullet,Y_{\bullet,\bullet}))
    \]
    whose cofibers are in $\tau_\perp(\mathrm{Gap}^w(\mathcal{F}))$. Hence,
    \[
    \mathrm{Gap}^\lor(\mathcal{F})/\mathrm{Gap}^w(\mathcal{F})\to \mathrm{Gap}(\mathcal{F})
    \]
    is injective on objects. Similarly, from Proposition \ref{prop:fullyfaithfulofev}, it is also injective on morphisms. Hence, it is an equivalence in the sense of homotopy categories.

    Then we have a cofiber sequence
    \[
     K(\mathrm{Gap}^w(\mathcal{F}))\xrightarrow{K(\tau_\perp)}K(\mathrm{Gap}^\lor(\mathcal{F}))\xrightarrow{K(p_\dashv)}K(\mathrm{Gap}(\mathcal{F})).
    \]
    Since $K(p_\dashv)$ admits a section $K(\Delta)$, it is split.
\end{proof}
\begin{lemma}\label{lem:fillable}\quad
    \begin{enumerate}[label=(\arabic*)]
        \item $\mathrm{Gap}_\mathcal{F}$ satisfies the d\'evissage condition.
        \item $\mathrm{Gap}^\lor_\mathcal{F}$ satisfies the weak d\'evissage condition.
    \end{enumerate}
\end{lemma}
\begin{proof}\quad
\begin{enumerate}[label=(\arabic*)]
    \item Assume $(A_\bullet,X_{\bullet,\bullet})\in\mathrm{Gap}([n],\mathcal{F})$. Then
    \[
    \begin{tikzcd}
        X_{\bullet,\bullet}^n:&0\ar[r]& X_{0,1}\ar[r]&\cdots\ar[r]&X_{0,n-1}\ar[r]&X_{0,n}\\
        X_{\bullet,\bullet}^{n-1}:&0\ar[r]\ar[u]& X_{0,1}\ar[r]\ar[u]&\cdots\ar[r]&X_{0,n-1}\ar[r]\ar[u]&X_{0,n-1}\ar[u]\\
        &\vdots&\vdots&\ddots&\vdots&\vdots\\
        X_{\bullet,\bullet}^1:&0\ar[r]\ar[u]& X_{0,1}\ar[r]\ar[u]&\cdots\ar[r]&X_{0,1}\ar[r]\ar[u]&X_{0,1}\ar[u]\\
        X_{\bullet,\bullet}^0:&0\ar[r]\ar[u]& 0\ar[r]\ar[u]&\cdots\ar[r]&0\ar[r]&0\ar[u]\\
    \end{tikzcd}
    \]
    and
    \[
    A_\bullet^k=(A_1,\cdots,A_k,0,\cdots,0),\quad 0\leq k\leq n
    \]
    gives the desired filtration.
    \item Assume $(A_\bullet,X_{\bullet,\bullet},B_\bullet,Y_{\bullet,\bullet})\in\mathrm{Gap}^\lor(\mathcal{F})$. By $(1)$,
    \[
    (A_\bullet\oplus B_\bullet,X_{\bullet,\bullet}\oplus Y_{\bullet,\bullet},A_\bullet\oplus B_\bullet,X_{\bullet,\bullet}\oplus Y_{\bullet,\bullet})\in\mathrm{E}(\mathrm{Gap}^\lor_\mathcal{F}).
    \]
    Since $(A_\bullet,X_{\bullet,\bullet},B_\bullet,Y_{\bullet,\bullet})$ is a direct summand of $(A_\bullet\oplus B_\bullet,X_{\bullet,\bullet}\oplus Y_{\bullet,\bullet},A_\bullet\oplus B_\bullet,X_{\bullet,\bullet}\oplus Y_{\bullet,\bullet})$, we have
    \[
    (A_\bullet,X_{\bullet,\bullet},B_\bullet,Y_{\bullet,\bullet})\in \mathrm{Idem}(\mathrm{E}(\mathrm{Gap}^\lor_\mathcal{F})).
    \]
    Thus, $\mathrm{Gap}^\lor_\mathcal{F}$ satisfies the weak d\'evissage condition.
\end{enumerate}
\end{proof}
\begin{thm}\label{thm:1fillableK0}
     $\mathcal{F}$ is $1$-fillable if and only if $K_0(\mathrm{Q}(\mathcal{F}))=0$.
\end{thm}
\begin{proof}
    $\mathrm{Gap}_\mathcal{F}$ restricts to a functor $\mathrm{Gap}^w_\mathcal{F}:\mathrm{Gap}^w(\mathcal{A})\to\mathrm{Gap}^w(\mathcal{F})$. Consider the diagram
    \[
    \begin{tikzcd}
        \mathrm{Gap}^w(\mathcal{A})\ar[r]\ar[d,"\mathrm{Gap}^w_\mathcal{F}"]&\mathrm{Gap}^\lor(\mathcal{A})\ar[r]\ar[d,"\mathrm{Gap}^\lor_\mathcal{F}"]&\mathrm{Gap}(\mathcal{A})\ar[d,"\mathrm{Gap}_\mathcal{F}"]\\
        \mathrm{Gap}^w(\mathcal{F})\ar[r]&\mathrm{Gap}^\lor(\mathcal{F})\ar[r]&\mathrm{Gap}(\mathcal{F})
    \end{tikzcd}
    \]
    By Proposition \ref{prop:1fill}, $K_0(\mathrm{Gap}^\lor_\mathcal{F})$ is just the direct sum of $K_0(\mathrm{Gap}^w_\mathcal{F})$ and $K_0(\mathrm{Gap}_\mathcal{F})$. Since $K(\mathrm{Gap}_\mathcal{F})$ is an equivalence by Proposition \ref{prop:additive}, By Lemma \ref{lem:devissageK_0}, $K_0(\mathrm{Gap}^w_\mathcal{F})$ is an epimorphism if and only if $\mathcal{F}$ is $1$-fillable. Consider the diagram of two split cofiber sequence
    \[
    \begin{tikzcd}
        K(\operatorname{colim}_n\mathcal{A}^n)\ar[r,"K(\tau_{\mathrm{id}_\mathcal{A}})"]\ar[d,"\mathrm{id}"]&K(\mathrm{Gap}^w(\mathcal{A}))\ar[r]\ar[d,"{K(\mathrm{Gap}^w_\mathcal{F})}"]&K(\mathrm{Q}(\mathrm{id}_\mathcal{A}))
        \ar[d]\\
        K(\operatorname{colim}_n\mathcal{A}^n)\ar[r,"{K(\tau_\mathcal{F})}"]&K(\mathrm{Gap}^w(\mathcal{F}))\ar[r]&K(\mathrm{Q}(\mathcal{F}))
    \end{tikzcd}
    \]
    Then $K_0(\mathrm{Q}(\mathrm{id}_\mathcal{A}))\to K_0(\mathrm{Q}(\mathcal{F}))$ is an epimorphism if and only if $\mathcal{F}$ is $1$-fillable. Since $K(\mathrm{Q}(\mathrm{id}_\mathcal{A}))\simeq\operatorname{fib}(K(\mathrm{id}_\mathcal{A}))=0$ by Theorem \ref{thm:quotient}, $K_0(\mathrm{Q}(\mathcal{F}))=0$ if and only if $\mathcal{F}$ is $1$-fillable.
\end{proof}
\begin{example}
    $\mathrm{id}_\mathcal{C}$ is always $1$-fillable: $\mathrm{Gap}^\lor(\mathrm{id}_\mathcal{C})\simeq\mathrm{Gap}^\lor(\mathcal{C})$ and $\mathrm{Gap}^\lor_{\mathrm{id}_\mathcal{C}}\simeq\mathrm{id}_{\mathrm{Gap}^\lor(\mathcal{C})}$ is also the identity map.
\end{example}
$\mathrm{Gap}([n],\mathrm{Gap}([n],\mathcal{C}))$ is a full subcategory of $\mathrm{Fun}(\mathrm{N}(\mathrm{Ar}[n]\times\mathrm{Ar}[n]),\mathcal{C})$, whose objects are donoted by $X_{\bullet,\bullet}^{\bullet,\bullet}$, where for every $i,j$, $X_{i,j}^{\bullet,\bullet},X_{\bullet,\bullet}^{i,j}$ are objects in $\mathrm{Gap}(\mathcal{C})$. By \cite[Lemma 7.3]{blumberg_gepner_tabuada_2013}, an object $X_{\bullet,\bullet}^{\bullet,\bullet}$ is decided by $X^{0,\bullet}_{0,\bullet}$, the $0$-the grid. Hence, an object $X_{\bullet,\bullet}^{\bullet,\bullet}$ in $\mathrm{Gap}(\mathrm{Gap}(\mathcal{C}))$ has two sides and one vertex at infinity, which is an object in $\mathrm{Gap}^\lor(\mathcal{C})$. This actually means that, whether we look at it from rows or columns, it is an object within $\mathrm{Gap}(\mathrm{Gap}(\mathcal{C}))$. From this, we stipulate the evaluation functor is in the sense of columns:
\[
\mathrm{ev}_{\mathrm{id}_{\mathrm{Gap}(\mathcal{C})}}:\mathrm{Gap}(\mathrm{Gap}(\mathcal{C}))\to\mathrm{Gap}(\mathcal{C}), X_{\bullet,\bullet}^{\bullet,\bullet}\mapsto \left(0\to\operatorname{colim}_nX_{0,n}^{0,1}\to\cdots\to \operatorname{colim}_nX_{0,n}^{0,m}\to\cdots\right)
\]
And, we have a "transpose" functor, which swaps rows and columns:
\[
T_\mathcal{C}:\mathrm{Gap}(\mathrm{Gap}(\mathcal{C}))\to \mathrm{Gap}(\mathrm{Gap}(\mathcal{C})), T_\mathcal{C}(X_{\bullet,\bullet}^{\bullet,\bullet})_{i,j}^{k,l}=X_{k,l}^{i,j},
\]
which induces a transpose evaluation: $\mathrm{ev}_{\mathrm{id}_{\mathrm{Gap}(\mathcal{C})}}^T:=\mathrm{ev}_{\mathrm{id}_{\mathrm{Gap}(\mathcal{C})}}\circ T_\mathcal{C}$.

Objects $\mathrm{Gap}(\mathrm{Gap}_\mathcal{F})$ are three tuples $(A_{\bullet,\bullet}^\bullet,X_{\bullet,\bullet}^{\bullet,\bullet},A^{\bullet,\bullet}_{\bullet})$, where $A_{\bullet,\bullet}^\bullet,A^{\bullet,\bullet}_\bullet\in\operatorname{colim}_n(\mathrm{Gap}(\mathcal{A}))^n$ and $X_{\bullet,\bullet}^{\bullet,\bullet}\in\mathrm{Gap}(\mathrm{Gap}(\mathcal{C}))$ such that for each $i,j$, $(A^\bullet_{i,j},X_{i,j}^{\bullet,\bullet}),(A_\bullet^{i,j},X_{\bullet,\bullet}^{i,j})\in\mathrm{Gap}(\mathcal{F})$. We also stipulate the evaluation, transpose and the transpose evaluation functors as follows:
\[
\begin{aligned}
    \mathrm{ev}_{\mathrm{Gap}_{\mathcal{F}}}:\mathrm{Gap}(\mathrm{Gap}_\mathcal{F})&\to\mathrm{Gap}(\mathcal{F}),(A_{\bullet,\bullet}^\bullet,X_{\bullet,\bullet}^{\bullet,\bullet},A^{\bullet,\bullet}_{\bullet})\mapsto \left(\operatorname{colim}_nA_{0,n}^\bullet,\mathrm{ev}_{\mathrm{id}_{\mathrm{Gap}(\mathcal{C})}}(X_{\bullet,\bullet}^{\bullet,\bullet})\right)\\
    T_\mathcal{F}:\mathrm{Gap}(\mathrm{Gap}_\mathcal{F})&\to \mathrm{Gap}(\mathrm{Gap}_\mathcal{F}),(A_{\bullet,\bullet}^\bullet,X_{\bullet,\bullet}^{\bullet,\bullet},A^{\bullet,\bullet}_{\bullet})\mapsto (A_\bullet^{\bullet,\bullet},T_\mathcal{C}(X_{\bullet,\bullet}^{\bullet,\bullet}),A_{\bullet,\bullet}^\bullet)\\
    \mathrm{ev}_{\mathrm{Gap}_{\mathcal{F}}}^T&:=\mathrm{ev}_{\mathrm{Gap}_{\mathcal{F}}}\circ T_\mathcal{F}
\end{aligned}
\]
Then we can define a "forgetful" functor $U_\mathcal{F}:\mathrm{Gap}(\mathrm{Gap}_\mathcal{F})\to\mathrm{Gap}^\lor(\mathcal{F})$ by $(\mathrm{ev}_{\mathrm{Gap}_{\mathcal{F}}}^T,\mathrm{ev}_{\mathrm{Gap}_{\mathcal{F}}})$.

the composition
\[
\mathrm{Gap}(\mathrm{Gap}_\mathcal{F})\to\mathrm{Gap}(\mathrm{Gap}^\lor_\mathcal{F})\xrightarrow{\mathrm{ev}_{\mathrm{Gap}^\lor_\mathcal{F}}}\mathrm{Gap}^\lor(\mathcal{F}).
\]
\begin{prop}\label{prop:1fillable}
    The following are equivalent:
    \begin{enumerate}[label=(\arabic*)]
        \item $\mathcal{F}$ is $1$-fillable.
        \item $U_\mathcal{F}$ is essentially surjective.
    \end{enumerate}
\end{prop}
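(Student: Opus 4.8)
The strategy is to identify both conditions with the essential surjectivity of the single evaluation functor $\mathrm{ev}_{\mathrm{Gap}^\lor_\mathcal{F}}\colon\mathrm{Gap}(\mathrm{Gap}^\lor_\mathcal{F})\to\mathrm{Gap}^\lor(\mathcal{F})$. First, by definition $\mathcal{F}$ is $1$-fillable precisely when the exact functor $\mathrm{Gap}^\lor_\mathcal{F}$ satisfies the d\'evissage condition, and by Corollary \ref{cor:fullyfaithfulofev} this happens if and only if $\mathrm{ev}_{\mathrm{Gap}^\lor_\mathcal{F}}$ is essentially surjective; so $(1)$ is equivalent to essential surjectivity of $\mathrm{ev}_{\mathrm{Gap}^\lor_\mathcal{F}}$. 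Next, the discussion preceding the statement realizes $U_\mathcal{F}$ as the composite
\[
U_\mathcal{F}\colon \mathrm{Gap}(\mathrm{Gap}_\mathcal{F})\xrightarrow{\ \Phi\ }\mathrm{Gap}(\mathrm{Gap}^\lor_\mathcal{F})\xrightarrow{\ \mathrm{ev}_{\mathrm{Gap}^\lor_\mathcal{F}}\ }\mathrm{Gap}^\lor(\mathcal{F}).
\]
Since a composite of functors can only be essentially surjective if its last factor is, the implication $(2)\Rightarrow(1)$ is immediate. For $(1)\Rightarrow(2)$ it therefore suffices to prove that the comparison functor $\Phi$ is essentially surjective, for then $U_\mathcal{F}=\mathrm{ev}_{\mathrm{Gap}^\lor_\mathcal{F}}\circ\Phi$ is essentially surjective as soon as $\mathrm{ev}_{\mathrm{Gap}^\lor_\mathcal{F}}$ is.

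So I would concentrate on producing, for an arbitrary object of $\mathrm{Gap}(\mathrm{Gap}^\lor_\mathcal{F})$, a preimage under $\Phi$ up to equivalence. Using the forgetful equivalences \cite[Lemma 7.3]{blumberg_gepner_tabuada_2013}, such an object is a finite filtration $0=P_0\to P_1\to\cdots\to P_m$ in $\mathrm{Gap}^\lor(\mathcal{F})$, that is, a pair of filtrations $0\to P^L_\bullet$, $0\to P^R_\bullet$ of lifted gap objects of $\mathcal{C}$ sharing a common evaluation $C_\bullet=\mathrm{ev}_\mathcal{F}(P^L_\bullet)=\mathrm{ev}_\mathcal{F}(P^R_\bullet)$, together with compatible lifts of all graded pieces along $\mathrm{Gap}^\lor_\mathcal{F}$. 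From this I would build a double gap object $X^{\bullet,\bullet}_{\bullet,\bullet}\in\mathrm{Gap}(\mathrm{Gap}(\mathcal{C}))$ — the bifiltration determined by the two filtrations over their common base $C_\bullet$, with interior entries given by the appropriate iterated pullbacks; Lemma \ref{lem:diagram3} should guarantee that the resulting square-diagram lies in $\mathrm{Gap}(\mathrm{Gap}(\mathcal{C}))$, and Lemma \ref{lem:cofiberofpullback} shows that the graded pieces of the rows of $X$ reproduce those of $P^R_\bullet$ and the graded pieces of the columns reproduce those of $P^L_\bullet$. Hence the given $\mathcal{A}$-lifts equip every row and every column of $X$ with a lift to an $\mathcal{A}$-gap object, promoting $X$ to an object of $\mathrm{Gap}(\mathrm{Gap}_\mathcal{F})$, and by construction $\Phi(X)$ has the same evaluation, transpose evaluation, and graded data as the object we started with.

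The main obstacle is the coherence of this construction: that the bifiltration genuinely assembles into a single $\mathrm{N}(\mathrm{Ar}[k]\times\mathrm{Ar}[k])$-diagram satisfying the pushout conditions in both directions, that the two families of lifts fit together into the triple $(A^\bullet_{\bullet,\bullet},X^{\bullet,\bullet}_{\bullet,\bullet},A_\bullet^{\bullet,\bullet})$ required for an object of $\mathrm{Gap}(\mathrm{Gap}_\mathcal{F})$, and that $\Phi(X)$ is equivalent to — not merely objectwise isomorphic after evaluation to — the original object. I expect to handle this last point exactly as in the proof of Proposition \ref{prop:fullyfaithfulofev}: reduce to an equivalence of homotopy categories and then supply the comparison morphisms by the same kind of diagram manipulations used there. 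Once $\Phi$ is shown essentially surjective, combining with the first paragraph completes the proof that $(1)\Leftrightarrow(2)$.
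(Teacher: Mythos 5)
Your first paragraph agrees with the paper: by Corollary \ref{cor:fullyfaithfulofev}, condition (1) is precisely essential surjectivity of $\mathrm{ev}_{\mathrm{Gap}^\lor_\mathcal{F}}$, and since $U_\mathcal{F}$ factors through that functor, $(2)\Rightarrow(1)$ is immediate. The gap is in your reduction of $(1)\Rightarrow(2)$ to the claim that the comparison functor $\Phi\colon\mathrm{Gap}(\mathrm{Gap}_\mathcal{F})\to\mathrm{Gap}(\mathrm{Gap}^\lor_\mathcal{F})$ is essentially surjective. That claim is false, and the homotopy-category argument you defer to cannot rescue it: $\Phi$ is $(T_\mathcal{F},\mathrm{id})$, so its essential image consists only of pairs whose first component is the transpose of the second, whereas a general object of $\mathrm{Gap}(\mathrm{Gap}^\lor_\mathcal{F})$ is a pair of two \emph{independent} lifted filtrations tied together only by compatibility of evaluations. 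Concretely, the glued bifiltration $X$ you propose to build contains the data of both filtrations $P^L_\bullet$ and $P^R_\bullet$, so $\Phi(X)=(T_\mathcal{F}(X),X)$ cannot be equivalent to the pair you started from: its second component would have to be equivalent to $P^R_\bullet$ alone, which it is not. (Note also that if $\Phi$ were essentially surjective, the proposition would be vacuous and no filling condition would be needed.)

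The repair is to aim the construction at the right target. For essential surjectivity of the composite $U_\mathcal{F}=\mathrm{ev}_{\mathrm{Gap}^\lor_\mathcal{F}}\circ\Phi$ you do not need to hit the intermediate object: given $P\in\mathrm{Gap}^\lor(\mathcal{F})$, use (1) to choose some $Q\in\mathrm{Gap}(\mathrm{Gap}^\lor_\mathcal{F})$ with $\mathrm{ev}_{\mathrm{Gap}^\lor_\mathcal{F}}(Q)\simeq P$, build the bifiltration $X$ from the two components of $Q$ (the paper glues the two grids along their common last column and fills the remaining corner block with the pullbacks $\tilde{X}^{0,i}_{0,j}=X^{0,i}_{0,1}\times_{X^{0,m}_{0,1}}Y^{0,j}_{0,1}$, together with the corresponding $\mathcal{A}$-lifts $C^\bullet_{\bullet,\bullet}$, $C^{\bullet,\bullet}_\bullet$), and then verify only that $U_\mathcal{F}(X)\simeq P$, i.e.\ that the last row and last column of $X$ reproduce the two filtrations constituting $P$. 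Your bifiltration-by-pullbacks is essentially the paper's construction; the flaw is purely in where you route the verification, and as routed it asserts something strictly stronger than the proposition and untrue.
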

\begin{proof}
    By Corollary \ref{cor:fullyfaithfulofev}, $\mathcal{F}$ is $1$-fillable if and only if  $\mathrm{ev}_{\mathrm{Gap}^\lor_\mathcal{F}}$ is essentially surjective. 
    
    $U_\mathcal{F}$ is the composition
    \[
    \mathrm{Gap}(\mathrm{Gap}_\mathcal{F})\to\mathrm{Gap}(\mathrm{Gap}^\lor_\mathcal{F})\simeq \mathrm{Gap}^\lor(\mathrm{Gap}_\mathcal{F})\xrightarrow{\mathrm{ev}_{\mathrm{Gap}^\lor_\mathcal{F}}}\mathrm{Gap}^\lor(\mathcal{F}),
    \]
    where the first map $\mathrm{Gap}(\mathrm{Gap}_\mathcal{F})\to \mathrm{Gap}^\lor(\mathrm{Gap}_\mathcal{F})$ is defined by $(T_\mathcal{F},\mathrm{id})$.
    
    Hence, $(2)\Rightarrow (1)$ is obvious. Now consider $(1)\Rightarrow (2)$.

    For $((A_\bullet,X_{\bullet,\bullet}),(B_{\bullet},Y_{\bullet,\bullet}))\in\mathrm{Gap}^\lor(\mathcal{F})$, assume there is an object
    \[
    ((A^\bullet_{\bullet,\bullet},X_{\bullet,\bullet}^{\bullet,\bullet},A_\bullet^{\bullet,\bullet}),(B^\bullet_{\bullet,\bullet},Y_{\bullet,\bullet}^{\bullet,\bullet},B_\bullet^{\bullet,\bullet}))\in\mathrm{Gap}([n],\mathrm{Gap}^\lor_\mathcal{F})
    \]
    such that $(A_{0,n}^\bullet,X_{0,n}^{\bullet,\bullet})=(A_\bullet,X_{\bullet,\bullet}),(B^\bullet_{0,n},Y_{0,n}^{\bullet,\bullet})=(B_\bullet,Y_{\bullet,\bullet})$. Also, we can assume $X_{0,i}^{\bullet,\bullet},Y_{0,i}^{\bullet,\bullet}\in\mathrm{Gap}([m],\mathcal{F})$ for all $1\leq i\leq n$. It means $X_{0,i}^{0,m}=Y_{0,i}^{0,m}$ for all $1\leq i\leq n$. Then define $Z_{\bullet,\bullet}^{\bullet,\bullet}\in\mathrm{Gap}(\mathrm{Gap}(\mathcal{C}))$ by
    \[
        \begin{tikzcd}
            0\ar[r]\ar[d]&0\ar[r]\ar[d]&\cdots\ar[r]&0\ar[r]\ar[d]&0\ar[d]\ar[r]&\cdots\ar[r]&0\ar[d]\\
            0\ar[r]\ar[d]&\tilde{X}_{0,1}^{0,1}\ar[r]\ar[d]&\cdots\ar[r]&\tilde{X}^{0,m}_{0,1}=Y_{0,1}^{0,1}\ar[r]\ar[d]&Y^{0,1}_{0,2}\ar[d]\ar[r]&\cdots\ar[r]&Y^{0,1}_{0,n}\ar[d]\\
            \vdots\ar[d]&\vdots\ar[d]&\ddots&\vdots\ar[d]&\vdots\ar[d]&\ddots&\vdots\ar[d]\\
            0\ar[r]\ar[d]&\tilde{X}_{0,m}^{0,1}=X^{0,1}_{0,1}\ar[r]\ar[d]&\cdots\ar[r]&\tilde{X}_{0,m}^{0,m}=X^{0,m}_{0,1}\ar[r]\ar[d]&X^{0,m}_{0,2}\ar[d]\ar[r]&\cdots\ar[r]&X^{0,m}_{0,n}\ar[d]\\
            0\ar[r]\ar[d]&X^{0,1}_{0,2}\ar[r]\ar[d]&\cdots\ar[r]&X^{0,m}_{0,2}\ar[r]\ar[d]&X^{0,m}_{0,2}\ar[d]\ar[r]&\cdots\ar[r]&X^{0,m}_{0,n}\ar[d]\\
            \vdots\ar[d]&\vdots\ar[d]&\ddots&\vdots\ar[d]&\vdots\ar[d]&\ddots&\vdots\ar[d]\\
            0\ar[r]&X^{0,1}_{0,n}\ar[r]&\cdots\ar[r]&X^{0,m}_{0,n}\ar[r]&X^{0,m}_{0,n}\ar[r]&\cdots\ar[r]&X^{0,m}_{0,n}
        \end{tikzcd}
        \]
        where for $1\leq i,j\leq m, \tilde{X}^{0,i}_{0,j}:=X^{0,i}_{0,1}\prod_{X^{0,m}_{0,1}}Y^{0,j}_{0,1}$. Besides, define $C_{\bullet,\bullet}^\bullet,C_\bullet^{\bullet,\bullet}$ by:
        \[
        C_{0,j}^i=\begin{cases}
            A_{0,1}^i,\quad 1\leq i,j\leq m\\
            B_{0,j-m+1}^i,\quad 1\leq i\leq m,m<j\leq m+n-1\\
            A^{0,i-m+1}_j,\quad 1\leq j\leq m, m<i\leq m+n-1\\
            A^{j-m+1}_{0,i-m+1},\quad m<i,j\leq m+n-1\\
            0,\quad\text{otherwise}
        \end{cases}
        \]
        and
        \[
        C^{0,j}_i=\begin{cases}
            B^{0,1}_i,\quad 1\leq i,j\leq m\\
            A^{0,j-m+1}_i,\quad 1\leq i\leq m,m<j\leq m+n-1\\
            B_{0,i-m+1}^j,\quad 1\leq j\leq m, m<i\leq m+n-1\\
            B^{j-m+1}_{0,i-m+1},\quad m<i,j\leq m+n-1\\
            0,\quad\text{otherwise}
        \end{cases}
        \]
        
        The image of $(C_{\bullet,\bullet}^\bullet,X_{\bullet,\bullet}^{\bullet,\bullet},C_\bullet^{\bullet,\bullet})$ under $U_\mathcal{F}$ is just $((A_\bullet,X_{\bullet,\bullet}),(B_{\bullet},Y_{\bullet,\bullet}))$.
\end{proof}
\begin{remark}
    The essentially surjection of $U_\mathcal{F}$ just means that given two infinity sides with the same infinity vertex in $\mathrm{Gap}^\lor(\mathcal{F})$, we can "fill" it into a whole grid in $\mathrm{Gap}(\mathrm{Gap}_\mathcal{F})$. This is the origin of the name "fillability".
\end{remark}
\subsection{D\'evissage Theorem for Algebraic K-theory}
 Define $\mathcal{E}$-linear functors $\mathcal{F}_n:\mathcal{A}_n\to\mathcal{C}_n$ by
\begin{itemize}
    \item $\mathcal{C}_1:=\mathcal{C},\mathcal{A}_1:=\mathcal{A},\mathcal{F}_1:=\mathcal{F}$.
    \item For $n\geq 2$, $\mathcal{C}_n:=\mathrm{Gap}^\lor(\mathcal{F}_{n-1}),\mathcal{A}_n:=\mathrm{Gap}^\lor(\mathcal{A}_{n-1})$ and $\mathcal{F}_n:=\mathrm{Gap}^\lor_{\mathcal{F}_{n-1}}$.
\end{itemize}
\begin{definition}
    $\mathcal{F}$ is called \emph{$n$-fillable} if for every $1\leq i\leq n$, $\mathcal{F}_i$ is $1$-fillable. $\mathcal{F}$ is called \emph{fillable} for every $i\geq 1$, $\mathcal{F}_i$ is $1$-fillable.
\end{definition}
\begin{lemma}\label{lem:inductionofquotient}
   Let $\mathcal{F}:\mathcal{A}\to\mathcal{C}\in\mathrm{Cat}^{\mathrm{perf}}$ be an exact functor between small stable idempotent-complete $\infty$-categories.   Then $\mathcal{U}_\mathbf{loc}(\mathrm{Q}(\mathcal{F}_{k+1}))\simeq\Omega \mathcal{U}_\mathbf{loc}(\mathrm{Q}(\mathcal{F}_k))$ for all $k\geq 1$. As a consequence, $\mathcal{U}_\mathbf{loc}(\mathrm{Q}(\mathcal{F}_{k+1}))\simeq\Omega^k \mathcal{U}_\mathbf{loc}(\mathrm{Q}(\mathcal{F}))$.
\end{lemma}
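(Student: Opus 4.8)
The plan is to establish, for each fixed $k$ with $1\le k\le n$, the one‑step equivalence $\mathcal{U}_{\mathrm{loc}}(\mathrm{Q}(\mathcal{F}_{k+1}))\simeq\Omega\,\mathcal{U}_{\mathrm{loc}}(\mathrm{Q}(\mathcal{F}_{k}))$, and then deduce the displayed consequence by composing these $k$ equivalences. Fix such a $k$ and write $\mathcal{G}:=\mathcal{F}_{k}\colon\mathcal{A}_{k}\to\mathcal{C}_{k}$, so that $\mathcal{F}_{k+1}=\mathrm{Gap}^{\lor}_{\mathcal{G}}$ by definition. Since $\mathcal{F}$ is $n$-fillable and $k\le n$, the functor $\mathcal{G}=\mathcal{F}_{k}$ is $1$-fillable, i.e.\ $\mathrm{Gap}^{\lor}_{\mathcal{G}}=\mathcal{F}_{k+1}$ satisfies the d\'evissage condition; hence Theorem \ref{thm:quotient} gives
\[
\mathcal{U}_{\mathrm{loc}}(\mathrm{Q}(\mathcal{F}_{k+1}))\;\simeq\;\mathrm{fib}\bigl(\mathcal{U}_{\mathrm{loc}}(\mathrm{Gap}^{\lor}_{\mathcal{G}})\bigr),
\]
so it suffices to produce an equivalence $\mathrm{fib}(\mathcal{U}_{\mathrm{loc}}(\mathrm{Gap}^{\lor}_{\mathcal{G}}))\simeq\Omega\,\mathcal{U}_{\mathrm{loc}}(\mathrm{Q}(\mathcal{G}))$. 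I will run the diagrammatic argument of the proof of Theorem \ref{thm:1fillableK0} at the level of $\mathcal{M}_{\mathrm{loc}}$ rather than only on $\pi_{0}$; note that Propositions \ref{prop:additive} and \ref{prop:exactsequenceofcone}, although phrased for $\mathcal{U}_{\mathrm{add}}$ resp.\ $K$, apply verbatim to $\mathcal{U}_{\mathrm{loc}}$, because $\mathcal{U}_{\mathrm{loc}}$ is in particular an additive invariant and the relevant sequences are split-exact resp.\ exact.

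First I would split off the ``$\lor$''-direction. Applying $\mathcal{U}_{\mathrm{loc}}$ to the commuting diagram
\[
\begin{tikzcd}
\mathrm{Gap}^{w}(\mathcal{A}_{k})\ar[r]\ar[d,"\mathrm{Gap}^{w}_{\mathcal{G}}"]&\mathrm{Gap}^{\lor}(\mathcal{A}_{k})\ar[r]\ar[d,"\mathrm{Gap}^{\lor}_{\mathcal{G}}"]&\mathrm{Gap}(\mathcal{A}_{k})\ar[d,"\mathrm{Gap}_{\mathcal{G}}"]\\
\mathrm{Gap}^{w}(\mathcal{G})\ar[r]&\mathrm{Gap}^{\lor}(\mathcal{G})\ar[r]&\mathrm{Gap}(\mathcal{G})
\end{tikzcd}
\]
and using Proposition \ref{prop:exactsequenceofcone} for $\mathrm{id}_{\mathcal{A}_{k}}$ and for $\mathcal{G}$ (so that both rows are split cofiber sequences compatibly with the section $\Delta$, exactly as in the proof of Theorem \ref{thm:1fillableK0}), we obtain $\mathcal{U}_{\mathrm{loc}}(\mathrm{Gap}^{\lor}_{\mathcal{G}})\simeq\mathcal{U}_{\mathrm{loc}}(\mathrm{Gap}^{w}_{\mathcal{G}})\oplus\mathcal{U}_{\mathrm{loc}}(\mathrm{Gap}_{\mathcal{G}})$, hence
\[
\mathrm{fib}\bigl(\mathcal{U}_{\mathrm{loc}}(\mathrm{Gap}^{\lor}_{\mathcal{G}})\bigr)\simeq\mathrm{fib}\bigl(\mathcal{U}_{\mathrm{loc}}(\mathrm{Gap}^{w}_{\mathcal{G}})\bigr)\oplus\mathrm{fib}\bigl(\mathcal{U}_{\mathrm{loc}}(\mathrm{Gap}_{\mathcal{G}})\bigr).
\]
The second summand is zero: for each $m$ one has $q_{m,\mathcal{G}}\circ(\mathrm{Gap}_{\mathcal{G}})|_{[m]}=q_{m,\mathrm{id}_{\mathcal{A}_{k}}}$ (both functors send a gap diagram to its tuple of graded pieces), and by Proposition \ref{prop:additive} these projections become equivalences onto $\mathcal{U}_{\mathrm{loc}}(\mathcal{A}_{k})^{m}$ after applying $\mathcal{U}_{\mathrm{loc}}$; passing to the filtered colimit in $m$ shows $\mathcal{U}_{\mathrm{loc}}(\mathrm{Gap}_{\mathcal{G}})$ is an equivalence, so $\mathrm{fib}(\mathcal{U}_{\mathrm{loc}}(\mathrm{Gap}_{\mathcal{G}}))\simeq 0$ and $\mathrm{fib}(\mathcal{U}_{\mathrm{loc}}(\mathrm{Gap}^{\lor}_{\mathcal{G}}))\simeq\mathrm{fib}(\mathcal{U}_{\mathrm{loc}}(\mathrm{Gap}^{w}_{\mathcal{G}}))$.

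Next I would identify this last fiber with $\Omega\,\mathcal{U}_{\mathrm{loc}}(\mathrm{Q}(\mathcal{G}))$, following the lower half of the proof of Theorem \ref{thm:1fillableK0}. One checks $\mathrm{Gap}^{w}_{\mathcal{G}}\circ\tau_{\mathrm{id}_{\mathcal{A}_{k}}}\simeq\tau_{\mathcal{G}}$, since both send $(A_{1},\dots,A_{m-1})$ to $(A_{1},A_{2}\lor\Sigma A_{1},\dots,\Sigma A_{m-1};\,0\to\mathcal{G}(A_{1})\to\cdots\to\mathcal{G}(A_{m-1})\to 0)$ with vanishing connecting maps. Applying $\mathcal{U}_{\mathrm{loc}}$ and using $\mathrm{Q}(-)=\mathrm{Cone}(\tau_{(-)})$ together with the splittings noted before Theorem \ref{thm:quotient}, this yields a map of split cofiber sequences
\[
\begin{tikzcd}[column sep=small]
\mathcal{U}_{\mathrm{loc}}(\operatorname{colim}_{m}\mathcal{A}_{k}^{m})\ar[r]\ar[d,"\mathrm{id}"]&\mathcal{U}_{\mathrm{loc}}(\mathrm{Gap}^{w}(\mathcal{A}_{k}))\ar[r]\ar[d,"\mathcal{U}_{\mathrm{loc}}(\mathrm{Gap}^{w}_{\mathcal{G}})"]&\mathcal{U}_{\mathrm{loc}}(\mathrm{Q}(\mathrm{id}_{\mathcal{A}_{k}}))\ar[d]\\
\mathcal{U}_{\mathrm{loc}}(\operatorname{colim}_{m}\mathcal{A}_{k}^{m})\ar[r]&\mathcal{U}_{\mathrm{loc}}(\mathrm{Gap}^{w}(\mathcal{G}))\ar[r]&\mathcal{U}_{\mathrm{loc}}(\mathrm{Q}(\mathcal{G}))
\end{tikzcd}
\]
whose left vertical map is the identity. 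Since $\mathcal{M}_{\mathrm{loc}}$ is stable, the standard $3\times 3$ argument (Lemma \ref{lem:diagram1}) gives $\mathrm{fib}(\mathcal{U}_{\mathrm{loc}}(\mathrm{Gap}^{w}_{\mathcal{G}}))\simeq\mathrm{fib}\bigl(\mathcal{U}_{\mathrm{loc}}(\mathrm{Q}(\mathrm{id}_{\mathcal{A}_{k}}))\to\mathcal{U}_{\mathrm{loc}}(\mathrm{Q}(\mathcal{G}))\bigr)$. But $\mathrm{id}_{\mathcal{A}_{k}}$ satisfies the d\'evissage condition, so Theorem \ref{thm:quotient} forces $\mathcal{U}_{\mathrm{loc}}(\mathrm{Q}(\mathrm{id}_{\mathcal{A}_{k}}))\simeq\mathrm{fib}(\mathcal{U}_{\mathrm{loc}}(\mathrm{id}_{\mathcal{A}_{k}}))\simeq 0$, whence $\mathrm{fib}(\mathcal{U}_{\mathrm{loc}}(\mathrm{Gap}^{w}_{\mathcal{G}}))\simeq\Omega\,\mathcal{U}_{\mathrm{loc}}(\mathrm{Q}(\mathcal{G}))$. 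Chaining the three displayed equivalences proves $\mathcal{U}_{\mathrm{loc}}(\mathrm{Q}(\mathcal{F}_{k+1}))\simeq\Omega\,\mathcal{U}_{\mathrm{loc}}(\mathrm{Q}(\mathcal{F}_{k}))$ for each $1\le k\le n$; composing these for the indices $1,\dots,k$ (each step legitimate because $k\le n$) gives $\mathcal{U}_{\mathrm{loc}}(\mathrm{Q}(\mathcal{F}_{k+1}))\simeq\Omega^{k}\,\mathcal{U}_{\mathrm{loc}}(\mathrm{Q}(\mathcal{F}))$.

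The only genuine work I expect is verifying the compatibilities that make the two displayed diagrams commute on the nose: that the base-change functors $\mathrm{Gap}_{\mathcal{G}}$, $\mathrm{Gap}^{\lor}_{\mathcal{G}}$, $\mathrm{Gap}^{w}_{\mathcal{G}}$ respect, up to canonical equivalence, the structure maps $\tau$, $q_{m}$, the colimit inclusions, the quotient projections $p_{\dashv}$, and the sections $\Delta$ that make Propositions \ref{prop:additive} and \ref{prop:exactsequenceofcone} functorial. Each of these maps is given by an explicit uniform formula, so the verification is a direct diagram chase; once it is in place, the rest is formal manipulation of fiber and cofiber sequences in the stable $\infty$-category $\mathcal{M}_{\mathrm{loc}}$.
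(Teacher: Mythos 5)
Your proposal is correct and follows essentially the same route as the paper's proof: identify $\mathcal{U}_{\mathrm{loc}}(\mathrm{Q}(\mathcal{F}_{k+1}))$ with $\operatorname{fib}(\mathcal{U}_{\mathrm{loc}}(\mathcal{F}_{k+1}))$ via Theorem \ref{thm:quotient}, split this fiber into the $\mathrm{Gap}^w$- and $\mathrm{Gap}$-summands via Proposition \ref{prop:exactsequenceofcone}, kill the second summand by Proposition \ref{prop:additive}, and identify the first with $\Omega\,\mathcal{U}_{\mathrm{loc}}(\mathrm{Q}(\mathcal{F}_k))$ by the map of split cofiber sequences over the identity on $\mathcal{U}_{\mathrm{loc}}(\operatorname{colim}_m\mathcal{A}_k^{m})$. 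Your write-up in fact supplies the detail (the compatibility $\mathrm{Gap}^w_{\mathcal{G}}\circ\tau_{\mathrm{id}}\simeq\tau_{\mathcal{G}}$ and the resulting two-row diagram) that the paper compresses into the single assertion that $\operatorname{fib}(\mathcal{U}_{\mathrm{loc}}(\mathrm{Gap}^w_{\mathcal{F}_k}))$ ``is just'' $\Omega\,\mathcal{U}_{\mathrm{loc}}(\mathrm{Q}(\mathcal{F}_k))$.
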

\begin{proof}
    By Theorem \ref{thm:quotient} and Lemma \ref{lem:fillable}, there is a cofiber sequence
    \[
    \mathcal{U}_\mathbf{loc}(\mathrm{Q}(\mathcal{F}_{k+1}))\simeq\operatorname{fib}(\mathcal{U}_\mathbf{loc}(\mathcal{F}_{k+1}))).
    \]
    By Proposition \ref{prop:1fill},
    \[
    \mathcal{U}_\mathbf{loc}(\mathcal{A}_{k+1})\simeq \mathcal{U}_\mathbf{loc}(\mathrm{Gap}^w(\mathcal{A}_k))\oplus \mathcal{U}_\mathbf{loc}(\mathrm{Gap}(\mathcal{A}_k)),\mathcal{U}_\mathbf{loc}(\mathcal{C}_{k+1})\simeq \mathcal{U}_\mathbf{loc}(\mathrm{Gap}^w(\mathcal{F}_k))\oplus \mathcal{U}_\mathbf{loc}(\mathrm{Gap}(\mathcal{F}_k)).
    \]
    Hence,
    \[
    \begin{aligned}
        &\operatorname{fib}(\mathcal{U}_\mathbf{loc}(\mathcal{F}_{k+1}))\\
        \simeq&\operatorname{fib}(\mathcal{U}_\mathbf{loc}(\mathrm{Gap}^w_{\mathcal{F}_k}))\oplus\operatorname{fib}(\mathcal{U}_\mathbf{loc}(\mathrm{Gap}_{\mathcal{F}_k})).
    \end{aligned}
    \]
    The first part $\operatorname{fib}(\mathcal{U}_\mathbf{loc}(\mathrm{Gap}^w_{\mathcal{F}_k}))$ is just $\Omega \mathcal{U}_\mathbf{loc}(\mathrm{Q}(\mathcal{F}_k))$. And since $\mathcal{U}_\mathbf{loc}(\mathrm{Gap}(\mathcal{F}^k))$ is an equivalence from Proposition \ref{prop:additive}, $\operatorname{fib}(\mathcal{U}_\mathbf{loc}(\mathrm{Gap}_{\mathcal{F}_k}))\simeq0$. Thus,
    \[
    \mathcal{U}_\mathbf{loc}(\mathrm{Q}(\mathcal{F}_{k+1}))\simeq\Omega \mathcal{U}_\mathbf{loc}(\mathrm{Q}(\mathcal{F}_k)).
    \]
\end{proof}
\begin{thm}\label{thm:nfillableKn}
    $\mathcal{F}$ is $n$-fillable if and only if $K_i(\mathrm{Q}(\mathcal{F}))=0$ for all $0\leq i\leq n-1$.
\end{thm}
\begin{proof}
    By Theorem \ref{thm:1fillableK0}, $\mathcal{F}$ is $n$-fillable if and only if $K_0(\mathrm{Q}(\mathcal{F}_i))=0$ for all $1\leq i\leq n$. By Lemma \ref{lem:inductionofquotient}, $K_i(\mathrm{Q}(\mathcal{F}))=0$ for all $0\leq i\leq n-1$.
\end{proof}
\begin{thm}\label{mainresult1}(Theorem \ref{thmA})
    Let $\mathcal{F}:\mathcal{A}\to\mathcal{C}\in\mathrm{Cat}^{\mathrm{perf}}(\mathcal{E})$ be an $\mathcal{E}$-linear functor. If $\mathcal{F}$ satisfies the weak d\'evissage condition(resp. d\'evissage condition), then the following statements are equivalent:
    \begin{enumerate}[label=(\arabic*)]
        \item $\mathcal{F}$ is $n$-fillable, and 
        \item $\mathcal{F}$ induces isomorphisms $K_i(\mathcal{A})\xrightarrow{\simeq}K_i(\mathcal{C})$ for all $1\leq i\leq n-1$, an epimorphism $K_n(\mathcal{A})\twoheadrightarrow K_n(\mathcal{C})$ and a monomorphism (resp. an isomorphism) $K_0(\mathcal{A})\hookrightarrow K_0(\mathcal{C})$.
    \end{enumerate}
\end{thm}
\begin{proof}
    Just from Theorem \ref{thm:nfillableKn} and Theorem \ref{thm:quotient}.
\end{proof}
\begin{cor}\label{cor:mainresult1}
    If $\mathcal{F}$ satisfies the weak d\'evissage condition(resp. d\'evissage condition), then the following statements are equivalent:
    \begin{enumerate}[label=(\arabic*)]
        \item $\mathcal{F}$ is fillable, and 
        \item $\mathcal{F}$ induces isomorphisms $K_n(\mathcal{A})\xrightarrow{\simeq}K_n(\mathcal{C})$ for all $n\geq 1$, and a monomorphism (resp. an isomorphism) $K_0(\mathcal{A})\hookrightarrow K_0(\mathcal{C})$.
    \end{enumerate}
\end{cor}
\begin{definition}
    Let $\mathcal{C}\in\mathrm{Cat}^\mathrm{perf}(\mathcal{E})$. $\mathcal{C}$ is called \emph{$n$-fillable} if the delta map $\delta_\mathcal{C}:\mathcal{C}\to\mathcal{C}^2$ is $n$-fillable. $\mathcal{C}$ is called \emph{fillable} if $\delta_\mathcal{C}$ is fillable.
\end{definition}
\begin{thm}\label{mainresult2}(Theorem \ref{thmB})
    Let $\mathcal{C}$ be a small idempotent-complete $\infty$-category. Then $K_i(\mathcal{C})=0$ for all $1\leq i\leq n$ if and only if $\mathcal{C}$ is $n$-fillable.
\end{thm}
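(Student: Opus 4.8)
The plan is to apply Theorem \ref{mainresult1} to the diagonal functor $\delta_\mathcal{C}:\mathcal{C}\to\mathcal{C}^2$ and then read off the vanishing from an elementary observation about diagonal homomorphisms. By definition, $\mathcal{C}$ being $n$-fillable means precisely that $\delta_\mathcal{C}$ is an $n$-fillable exact functor, and by Lemma \ref{lem:diagdevissage} the functor $\delta_\mathcal{C}$ satisfies the weak d\'evissage condition. Hence Theorem \ref{mainresult1} applies with $\mathcal{F}=\delta_\mathcal{C}$: the map $K_i(\delta_\mathcal{C})$ is an isomorphism $K_i(\mathcal{C})\xrightarrow{\simeq}K_i(\mathcal{C}^2)$ for $1\le i\le n-1$ and an epimorphism $K_n(\mathcal{C})\twoheadrightarrow K_n(\mathcal{C}^2)$.

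Next I would identify the target group and the map concretely. Since the restriction of $K$ to $\mathrm{Cat}_\infty^{\mathrm{perf}}$ is a localizing invariant, hence additive, the split-exact sequence $\mathcal{C}\rightleftarrows\mathcal{C}^2\rightleftarrows\mathcal{C}$ given by the inclusions and projections yields an equivalence $K(\mathcal{C}^2)\xrightarrow{\simeq}K(\mathcal{C})\times K(\mathcal{C})$, so $K_i(\mathcal{C}^2)\cong K_i(\mathcal{C})\oplus K_i(\mathcal{C})$. Because $p_1\circ\delta_\mathcal{C}\simeq p_2\circ\delta_\mathcal{C}\simeq\mathrm{id}_\mathcal{C}$ for the two projections $p_1,p_2$, under this identification $K(\delta_\mathcal{C})$ becomes the diagonal $(\mathrm{id},\mathrm{id})$; in particular, on homotopy groups $K_i(\delta_\mathcal{C})$ is the homomorphism $x\mapsto(x,x)$.

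Finally I would conclude: for $1\le i\le n-1$, Theorem \ref{mainresult1} says $x\mapsto(x,x)$ is an isomorphism $K_i(\mathcal{C})\to K_i(\mathcal{C})\oplus K_i(\mathcal{C})$, but the image of a diagonal map is the diagonal subgroup, so surjectivity forces $K_i(\mathcal{C})=0$; for $i=n$, Theorem \ref{mainresult1} gives that $x\mapsto(x,x)$ is an epimorphism onto $K_n(\mathcal{C})\oplus K_n(\mathcal{C})$, which again forces $K_n(\mathcal{C})=0$. Thus $K_i(\mathcal{C})=0$ for all $1\le i\le n$. There is no real obstacle beyond Theorem \ref{mainresult1} itself: the whole argument rests on that theorem, on Lemma \ref{lem:diagdevissage}, and on the additivity of $K$, the only new input being the triviality that a diagonal homomorphism $A\to A\oplus A$ is surjective (equivalently, an isomorphism) only when $A=0$.
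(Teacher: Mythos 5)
Your proof is correct and follows exactly the paper's own argument: apply Theorem \ref{mainresult1} to $\delta_\mathcal{C}$ (which satisfies the weak d\'evissage condition by Lemma \ref{lem:diagdevissage}) and observe that a surjective diagonal homomorphism $K_i(\mathcal{C})\to K_i(\mathcal{C})\oplus K_i(\mathcal{C})$ forces the group to vanish. You merely spell out the identification $K_i(\mathcal{C}^2)\cong K_i(\mathcal{C})\oplus K_i(\mathcal{C})$ and the diagonal form of the induced map, which the paper leaves implicit.
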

\begin{proof}
    By Theorem \ref{mainresult1} and Lemma \ref{lem:diagdevissage}, $K_i(\mathcal{C})\to K_i(\mathcal{C}^2)$ are surjective for all $1\leq i\leq n$, which forces them are trivial.
\end{proof}
\begin{cor}\label{cor:mainresult2}
        Let $\mathcal{C}$ be a small idempotent-complete $\infty$-category. Then $K_n(\mathcal{C})=0$ for all $n\geq 1$ if and only if $\mathcal{C}$ is fillable.
\end{cor}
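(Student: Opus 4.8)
The plan is to deduce this immediately from Theorem \ref{mainresult2} by unwinding the definition of fillability. First I would recall that $\mathcal{C}$ being fillable means, by definition, that the diagonal functor $\delta_\mathcal{C}:\mathcal{C}\to\mathcal{C}^2$ is fillable, i.e.\ that each iterated functor $(\delta_\mathcal{C})_i$ is $1$-fillable for every $i\geq 1$. In particular, for any fixed $n\geq 1$, the functors $(\delta_\mathcal{C})_1,\dots,(\delta_\mathcal{C})_n$ are all $1$-fillable, so $\delta_\mathcal{C}$ is $n$-fillable; hence $\mathcal{C}$ is $n$-fillable in the sense of the definition preceding Theorem \ref{mainresult2}. (If one prefers, this is exactly the easy direction of the bookkeeping recorded in Proposition \ref{prop:fillable}.)

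Next I would apply Theorem \ref{mainresult2} to $\mathcal{C}$ with this value of $n$, obtaining $K_i(\mathcal{C})=0$ for all $1\leq i\leq n$. Since $n\geq 1$ was arbitrary, for any given $m\geq 1$ we may take $n=m$ and conclude $K_m(\mathcal{C})=0$. Therefore $K_n(\mathcal{C})=0$ for all $n\geq 1$, which is the assertion. The only substantive ingredient is Theorem \ref{mainresult2}; beyond that the argument is a purely formal quantifier exchange, so I do not anticipate any obstacle. The one point worth stating explicitly in the write-up is the implication ``fillable $\Rightarrow$ $n$-fillable for every $n$,'' which is immediate from the definitions and is what licenses passing to arbitrarily large $n$.
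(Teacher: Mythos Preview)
Your proposal is correct and is exactly the intended argument: the paper states the corollary without proof because it follows immediately from Theorem \ref{mainresult2} via the observation that fillable implies $n$-fillable for every $n\geq 1$. There is nothing to add.
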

\subsection{Generic D\'evissage Theorem for Localizing Invariants}
\begin{definition}
    Denote by
    \begin{itemize}
        \item $\mathfrak{S}_\mathrm{devissage}(\mathcal{E})$, the class of all $\mathcal{E}$-linear functors which satisfy the d\'evissage condition.
        \item  $\mathfrak{S}_\mathrm{devissage}^w(\mathcal{E})$, the class of all $\mathcal{E}$-linear functors which satisfy the weak d\'evissage condition.
        \item $\mathfrak{S}_\mathrm{n-fillability}(\mathcal{E})$, the class of all $n$-fillable $\mathcal{E}$-linear functors for $n\geq 1$.
        \item $\mathfrak{S}_\mathrm{fillability}(\mathcal{E})$, the class of all fillable $\mathcal{E}$-linear functors.
    \end{itemize}
    In particular, denote by $\mathfrak{S}_\mathrm{devissage}:=\mathfrak{S}_\mathrm{devissage}(\mathrm{Sp}^\omega),\mathfrak{S}^w_\mathrm{devissage}:=\mathfrak{S}^w_\mathrm{devissage}(\mathrm{Sp}^\omega),\mathfrak{S}_\mathrm{n-fillability}:=\mathfrak{S}_\mathrm{n-fillability}(\mathrm{Sp}^\omega)$ and $\mathfrak{S}_\mathrm{fillability}:=\mathfrak{S}_\mathrm{fillability}(\mathrm{Sp}^\omega)$.
\end{definition}
\begin{definition}
    Let $\mathfrak{S}$ be a class of some $\mathcal{E}$-linear functors and $f:\mathrm{Fun}(\Delta^1,\mathrm{Cat}^\mathrm{perf}(\mathcal{E}))\to \mathrm{Fun}(\Delta^1,\mathrm{Cat}^\mathrm{perf}(\mathcal{E}))$ be a functor. For $n\geq 1$, denote by
    \begin{itemize}
        \item $f^n(\mathfrak{S})$ the class of all $\mathcal{E}$-linear functors with form $f^n(\mathcal{F})$ for $\mathcal{F}\in\mathfrak{S}$.
        \item $f^{-n}(\mathfrak{S})$ the class of all $\mathcal{E}$-linear functors $\mathcal{F}$ such that $f^n(\mathcal{F})\in\mathfrak{S}$.
    \end{itemize}
\end{definition}
\begin{definition}
    A class $\mathfrak{S}$ of some $\mathcal{E}$-linear functors is called \emph{fillable} if the following equivalent statements hold:
    \begin{enumerate}[label=(\arabic*)]
        \item $\mathrm{Gap}^\lor_{(-)}(\mathfrak{S}\cap\mathfrak{S}_\mathrm{1-fillability}(\mathcal{E}))\subseteq\mathfrak{S}$.
        \item $(\mathrm{Gap}^\lor_{(-)})^k(\mathfrak{S}\cap\mathfrak{S}_\mathrm{n-fillability}(\mathcal{E}))\subseteq\mathfrak{S}$ for every $1\leq k\leq n$ and every $n\geq 1$.
    \end{enumerate}
\end{definition}
\begin{thm}[Generic D\'evissage Theorem]\label{mainresult3}
    Suppose $\mathfrak{S}$ is a fillable class of some $\mathcal{E}$-linear functors and $L:\mathrm{Cat}^\mathrm{perf}(\mathcal{E})\to\mathrm{Sp}$ is a localizing invariant such that $L_m$ is $\mathfrak{S}$-epimorphic for some $m\in\mathbb{Z}$. Then
      \begin{enumerate}[label=(\arabic*)]
          \item $L_m$ is $\mathfrak{S}_\mathrm{devissage}^w(\mathcal{E})\cap\mathfrak{S}_\mathrm{1-fillability}(\mathcal{E})\cap(\mathrm{Gap}^\lor_{(-)})^{-1}\mathfrak{S}$-monomorphic.
          \item $L_m$ is $\mathfrak{S}_\mathrm{devissage}(\mathcal{E})\cap\mathfrak{S}_\mathrm{1-fillability}(\mathcal{E})\cap\mathfrak{S}$-invariant.
          \item $L_{m+n}$ is $\mathfrak{S}_\mathrm{devissage}^w(\mathcal{E})\cap\mathfrak{S}_\mathrm{n-fillability}(\mathcal{E})\cap(\mathrm{Gap}^\lor_{(-)})^{-1}\mathfrak{S}$-epimorphic for every $n\geq 1$.
          \item $L_{m+k}$ is $\mathfrak{S}_\mathrm{devissage}^w(\mathcal{E})\cap\mathfrak{S}_\mathrm{n-fillability}(\mathcal{E})\cap(\mathrm{Gap}^\lor_{(-)})^{-1}\mathfrak{S}$-invariant for every $0<k<n$.
      \end{enumerate}
\end{thm}
\begin{proof}\quad
    \begin{enumerate}[label=(\arabic*)]
    \item Let $(\mathcal{F}:\mathcal{A}\to\mathcal{C})\in \mathfrak{S}_\mathrm{devissage}^w(\mathcal{E})\cap\mathfrak{S}_\mathrm{1-fillability}(\mathcal{E})\cap(\mathrm{Gap}^\lor_{(-)})^{-1}\mathfrak{S}$. By Proposition \ref{prop:1fill} and Theorem \ref{thm:quotient}, $L_m(\mathrm{Gap}^\lor(\mathcal{F}))\simeq L_m(\mathrm{Gap}^\lor(\mathcal{A}))\oplus\pi_m\operatorname{fib}(L(\mathcal{F}))$. Since $\mathrm{Gap}^\lor(\mathcal{A})\to\mathrm{Gap}^\lor(\mathcal{F})$ satisfies the d\'evissage condition and $L_m$ is $\mathfrak{S}_\mathrm{devissage}(\mathcal{E})$-epimorphic,
    \[
    L_m(\mathrm{Gap}^\lor(\mathcal{A}))\to L_m(\mathrm{Gap}^\lor(\mathcal{F}))\simeq L_m(\mathrm{Gap}^\lor(\mathcal{A}))\oplus\pi_m\operatorname{fib}(L(\mathcal{F}))
    \]
    is an epimorphism. Then $\pi_m\operatorname{fib}(L(\mathcal{F}))=0$. Hence, $L_m(\mathcal{F})$ is a monomorphism.
    \item On the one hand, since $\mathfrak{S}_\mathrm{devissage}(\mathcal{E})\cap\mathfrak{S}_\mathrm{1-fillability}(\mathcal{E})\cap\mathfrak{S}\subseteq \mathfrak{S}^w_\mathrm{devissage}(\mathcal{E})\cap\mathfrak{S}_\mathrm{1-fillability}(\mathcal{E})\cap\mathfrak{S}$, by $(1)$ and Lemma \ref{lem:exactproperty1}, we know $L_m$ is $\mathfrak{S}_\mathrm{devissage}(\mathcal{E})\cap\mathfrak{S}_\mathrm{1-fillability}(\mathcal{E})\cap\mathfrak{S}$-monomorphic. 
    
    On the other hand, since $\mathfrak{S}_\mathrm{devissage}(\mathcal{E})\cap\mathfrak{S}_\mathrm{1-fillability}(\mathcal{E})\cap\mathfrak{S}\subseteq \mathfrak{S}$ and $L_m$ is $\mathfrak{S}$-epimorphic, $L_m$ is $\mathfrak{S}_\mathrm{devissage}(\mathcal{E})\cap\mathfrak{S}_\mathrm{1-fillability}(\mathcal{E})\cap\mathfrak{S}$-epimorphic. 
    
    Hence, $L_m$ is $\mathfrak{S}_\mathrm{devissage}(\mathcal{E})\cap\mathfrak{S}_\mathrm{1-fillability}(\mathcal{E})\cap\mathfrak{S}$-invariant.
    \item Let $(\mathcal{F}:\mathcal{A}\to\mathcal{C})\in \mathfrak{S}_\mathrm{devissage}^w(\mathcal{E})\cap\mathfrak{S}_\mathrm{n-fillability}(\mathcal{E})\cap\mathfrak{S}$. Then $(\mathrm{Gap}^\lor_{(-)})^n\mathcal{F}\in\mathfrak{S}_\mathrm{devissage}(\mathcal{E})$. Since $L_m$ is $\mathfrak{S}$-epimorphic, by Lemma \ref{lem:inductionofquotient} and Theorem \ref{thm:quotient},
    \[
    L_m((\mathrm{Gap}^\lor)^n\mathcal{A})\to L_m((\mathrm{Gap}^\lor)^n\mathcal{A})\oplus\pi_m\Omega^{n-1}\operatorname{fib}(L(\mathcal{F}))
    \]
    is an epimorphism. Hence, $\pi_{m+n-1}\operatorname{fib}(L(\mathcal{F}))=0$, which means $L_{m+n}(\mathcal{F})$ is an epimorphism.
    \item Let $\mathcal{F}\in \mathfrak{S}_\mathrm{devissage}^w(\mathcal{E})\cap\mathfrak{S}_\mathrm{n-fillability}(\mathcal{E})\cap\mathfrak{S}\subseteq \mathfrak{S}_\mathrm{devissage}^w(\mathcal{E})\cap\mathfrak{S}_\mathrm{fillability}^k(\mathcal{E})\cap\mathfrak{S}$. Similar to $(3)$, we have $\pi_{m+k-1}\operatorname{fib}(L(\mathcal{F}))=0$ for all $1\leq k\leq n$. Hence, $L_{m+k}(F)$ are isomorphisms for all $1\leq k<n$.
\end{enumerate}
\end{proof}
\begin{cor}\label{cor:mainresult3.1}
   Suppose $\mathfrak{S}$ is a fillable class of some $\mathcal{E}$-linear functors and $L:\mathrm{Cat}^\mathrm{perf}(\mathcal{E})\to\mathrm{Sp}$ is a localizing invariant such that $L_m$ is $\mathfrak{S}$-epimorphic for some $m\in\mathbb{Z}$. Then $L_{m+n}$ is $\mathfrak{S}_\mathrm{devissage}^w(\mathcal{E})\cap\mathfrak{S}_\mathrm{fillability}(\mathcal{E})\cap (\mathrm{Gap}^\lor_{(-)})^{-1}\mathfrak{S}$-invariant for every $n\geq 1$.
\end{cor}
\begin{cor}\label{cor:mainresult3.2}
    Suppose $\mathfrak{S}$ is a fillable class of some $\mathcal{E}$-linear functors and $L:\mathrm{Cat}^\mathrm{perf}(\mathcal{E})\to\mathrm{Sp}$ is a localizing invariant such that $L_m$ is $\mathfrak{S}$-epimorphic for some $m\in\mathbb{Z}$. Then $L_{m+n}$ is $\mathfrak{S}_\mathrm{devissage}(\mathcal{E})\cap\mathfrak{S}_\mathrm{fillability}(\mathcal{E})\cap \mathfrak{S}$-invariant for every $n\geq 0$.
\end{cor}
\begin{cor}\label{cor:mainresult3.3}
    Suppose $\mathfrak{S}$ is a fillable class of some $\mathcal{E}$-linear functors and $L:\mathrm{Cat}^\mathrm{perf}(\mathcal{E})\to\mathrm{Sp}$ is a localizing invariant such that $L_m$ is $\mathfrak{S}$-epimorphic for all $m\in\mathbb{Z}$. Then $L_m$ is $\mathfrak{S}_\mathrm{devissage}(\mathcal{E})\cap\mathfrak{S}^1_\mathrm{fillability}(\mathcal{E})\cap \mathfrak{S}$-invariant for all $m\in\mathbb{Z}$.
\end{cor}
\begin{cor}\label{cor:mainresult3.4}
    Suppose
    \begin{itemize}
        \item $\mathfrak{S}$ is a fillable class of $\mathcal{E}'$-linear functors.
        \item $L':\mathrm{Cat}^\mathrm{perf}(\mathcal{E}')\to\mathrm{Sp}$ is a localizing invariant such that $L'_m$ is $\mathfrak{S}$-epimorphic.
        \item there is a commutative diagram
        \[
        \begin{tikzcd}
            \mathrm{Cat}^\mathrm{perf}(\mathcal{E})\ar[d,"f"]\ar[dr,"L"]\\
            \mathrm{Cat}^\mathrm{perf}(\mathcal{E}')\ar[r,"L'"']&\mathrm{Sp}
        \end{tikzcd}
    \]
    \end{itemize}
Then
\begin{enumerate}[label=(\arabic*)]
          \item $L_m$ is $f^{-1}(\mathfrak{S}_\mathrm{devissage}^w(\mathcal{E})\cap\mathfrak{S}_\mathrm{1-fillability}(\mathcal{E})\cap(\mathrm{Gap}^\lor_{(-)})^{-1}\mathfrak{S})$-monomorphic.
          \item $L_m$ is $f^{-1}(\mathfrak{S}_\mathrm{devissage}(\mathcal{E})\cap\mathfrak{S}_\mathrm{1-fillability}(\mathcal{E})\cap\mathfrak{S})$-invariant.
          \item $L_{m+n}$ is $f^{-1}(\mathfrak{S}_\mathrm{devissage}^w(\mathcal{E})\cap\mathfrak{S}_\mathrm{n-fillability}(\mathcal{E})\cap(\mathrm{Gap}^\lor_{(-)})^{-1}\mathfrak{S})$-epimorphic for every $n\geq 1$.
          \item $L_{m+k}$ is $f^{-1}(\mathfrak{S}_\mathrm{devissage}^w(\mathcal{E})\cap\mathfrak{S}_\mathrm{n-fillability}(\mathcal{E})\cap(\mathrm{Gap}^\lor_{(-)})^{-1}\mathfrak{S})$-invariant for every $0<k<n$.
      \end{enumerate}
\end{cor}
\begin{example}
     When $\mathcal{E}=\mathcal{E}'$, we can set $L=\Sigma\circ L'$ and $f:\mathcal{C}\mapsto\mathrm{Ind}(\mathcal{C})^{\omega_1}/\mathcal{C}$ or $L=\Omega\circ L'$ and $f$ is the functor constructed by \cite[Theorem 3.5]{Maxime_Vladimir_Christoph_2024}.
\end{example}
\begin{example}
    \cite[Theorem A]{Maxime_Vladimir_Christoph_2024} constructs a functor $\mathrm{Sp}\to\mathrm{Cat}^\mathrm{perf},E\mapsto \mathcal{C}_E$ such that $K(\mathcal{C}_E)\simeq E$, where $K:\mathrm{Cat}^\mathrm{perf}\to\mathrm{Sp}$ is the non-connective algebraic K-theory constructed by \cite{blumberg_gepner_tabuada_2013}. Hence, when $\mathcal{E}'=\mathrm{Sp}^\omega$ and $L'=K$, for any $L,\mathcal{E}$, we can find suitable $f$ and apply this theorem.
\end{example}
\begin{definition}
    Suppose $\mathfrak{S}$ is a class of exact functors in $\mathrm{Cat}^\mathrm{perf}$. Denote by $\mathfrak{S}(\mathcal{E})$ the class of all $\mathcal{E}$-linear functors in $\mathfrak{S}$.
\end{definition}
\begin{lemma}\label{lem:fillablesubclass}
    If $\mathfrak{S}$ is fillable, so is $\mathfrak{S}(\mathcal{E})$.
\end{lemma}
\begin{proof}
    Omitted.
\end{proof}
There are some simple examples of fillable classes.
\begin{example}
    The class of all equivalences in $\mathrm{Cat}^\mathrm{perf}(\mathcal{E})$ is fillable.
\end{example}
\begin{example}
    Let $\mathcal{F}$ be an $\mathcal{E}$-linear functor. Then $\{\mathcal{F},\mathrm{Gap}^\lor_\mathcal{F},\cdots,(\mathrm{Gap}^\lor_{(-)})^n\mathcal{F},\cdots\}$ is fillable.
\end{example}
\begin{example}
For trivial localizing invariant $L^\mathrm{trivial}:\mathrm{Cat}^\mathrm{perf}(\mathcal{E})\to\mathrm{Sp}$, which sends all $\mathcal{E}$-linear $\infty$-categories to zero objects in $\mathrm{Sp}$, $L_m^\mathrm{trivial}$ is $\mathfrak{S}_\mathrm{deivssage}(\mathcal{E})$-invariant for all $m\in\mathbb{Z}$.
\end{example}
\section{On the theorem of heart}
\subsection{t-structure}
t-structure is firstly introduced by \cite[Definition 1.3.1]{Deligne_1982}. \cite[Definition 1.2.1.1]{lurie_2017} gives the definition of t-structure on stable $\infty$-categories with homological indexing.

A $t$-structure on a stable category $\mathcal{C}$ is a pair of full subcategories $(\mathcal{C}_{\ge 0}, \mathcal{C}_{\le 0})$ such that $(h\mathcal{C}_{\ge 0}, h\mathcal{C}_{\le 0})$ is a $t$-structure on $h\mathcal{C}$. 

If $\mathcal{D}$ is another stable category with a $t$-structure $(\mathcal{D}_{\ge 0}, \mathcal{D}_{\le 0})$, then an exact functor $F : \mathcal{C} \to \mathcal{D}$ is called left $t$-exact if $F(\mathcal{C}_{\le 0}) \subset \mathcal{D}_{\le 0}$, and right $t$-exact if $F(\mathcal{C}_{\ge 0}) \subset \mathcal{D}_{\ge 0}$. Further, $F$ is called $t$-exact if it is both left and right $t$-exact.

Given a $t$-structure $(\mathcal{C}_{\ge 0}, \mathcal{C}_{\le 0})$ on a stable category $\mathcal{C}$, for any $a \in \mathbb{Z}$ we put
\[
\mathcal{C}_{\ge a} = \mathcal{C}_{\ge 0}[a], \quad \mathcal{C}_{\le a} = \mathcal{C}_{\le 0}[a].
\]
We denote by $\tau_{\ge a} : \mathcal{C} \to \mathcal{C}_{\ge a}$ the right adjoint to the inclusion, and by $\tau_{\le a} : \mathcal{C} \to \mathcal{C}_{\le a}$ the left adjoint to the inclusion. We also denote by the same symbols the compositions
\[
\mathcal{C} \xrightarrow{\tau_{\ge a}} \mathcal{C}_{\ge a} \hookrightarrow \mathcal{C}, \quad \mathcal{C} \xrightarrow{\tau_{\le a}} \mathcal{C}_{\le a} \hookrightarrow \mathcal{C}.
\]
For integers $a \leq b$ we put $\mathcal{C}_{[a,b]} = \mathcal{C}_{\ge a} \cap \mathcal{C}_{\le b}$. In particular, $\mathcal{C}_{[0,0]} = \mathcal{C}^\heartsuit$ is the heart of the $t$-structure, which is an abelian category. We denote by $\tau_{[a,b]} : \mathcal{C} \to \mathcal{C}_{[a,b]}$ the functor $\tau_{\ge a} \circ \tau_{\le b} \cong \tau_{\le b} \circ \tau_{\ge a}$. Denote
\[
\pi_a(x) = \tau_{[a,a]}(x)[-a] \in \mathcal{C}^\heartsuit, \quad a \in \mathbb{Z},\ x \in \mathcal{C}.
\]
We recall the notation
\[
\mathcal{C}^+ = \bigcup_{a \in \mathbb{Z}} \mathcal{C}_{\le a}, \quad \mathcal{C}^- = \bigcup_{a \in \mathbb{Z}} \mathcal{C}_{\ge a}, \quad \mathcal{C}^b = \mathcal{C}^+ \cap \mathcal{C}^- = \bigcup_{n \ge 0} \mathcal{C}_{[-n,n]}.
\]
The $t$-structure $(\mathcal{C}_{\ge 0}, \mathcal{C}_{\le 0})$ is called left bounded resp. right bounded resp. bounded if $\mathcal{C} = \mathcal{C}^+$ resp. $\mathcal{C} = \mathcal{C}^-$ resp. $\mathcal{C} = \mathcal{C}^b$. We use the notation from \cite{efimov2026}:
\begin{definition}
    A small \emph{$t$-category} is a small stable $\infty$-category $\mathcal{C}$ with a bounded $t$-structure $(\mathcal{C}_{\ge 0}, \mathcal{C}_{\le 0})$.
\end{definition}
\begin{definition}
    An $t$-exact functor $F:\mathcal{A}\to\mathcal{C}$ between small $t$-categories is called \emph{coconnective} if
        \begin{itemize}
            \item $F(\mathcal{A})$ generates $\mathcal{C}$ as a stable idempotent-complete $\infty$-categories(it satisfies the weak d\'evissage condition), and
            \item $F|_{\mathcal{A}^\heartsuit}:\mathcal{A}^\heartsuit\to\mathcal{C}^\heartsuit$ is fully faithful.
        \end{itemize}
\end{definition}
\begin{lemma}\label{lem:derived1}\cite[Proposition 1.51]{efimov2026}
    Let $\mathcal{F}:\mathcal{A}\to\mathcal{C}$ be a coconnective $t$-exact functor. Then $(\mathcal{C}^\heartsuit,\mathcal{A}^\heartsuit)$ satisfies Quillen's d\'evissage condition, i.e. $\mathcal{A}^\heartsuit$ is closed under subobjects and subquotients in $\mathcal{C}^\heartsuit$. As a consequence, $\mathcal{F}$ satisfies the d\'evissage condition.
\end{lemma}
\subsection{Some remarks on Barwick's theorem}
Recall the theorem of heart.
\begin{thm}\cite{Barwick_2015}
    Let $\mathcal{C}$ be a stable $\infty$-category equipped with a bounded t-structure. Then the natural inclusion $\mathcal{C}^\heartsuit\to\mathcal{C}$ induces isomoprhisms $K_n(\mathcal{C}^\heartsuit)\to K_n(\mathcal{C})$ for all $n\geq 0$.
\end{thm}
\begin{lemma}\label{lem:heart}
    Let $\mathcal{C}$ be a stable $\infty$-category equipped with a bounded t-structure. Then $\mathrm{Gap}^\lor(\mathcal{C})$ admits a bounded t-structure.
\end{lemma}
\begin{proof}
    Define full subcategories:
    \[
    \begin{aligned}
        \mathrm{Gap}^\lor(\mathcal{C})_{\geq 0}:=\{(X,Y)\in\mathrm{Gap}^\lor(\mathcal{C})|X_{0,n},Y_{0,n}\in\mathcal{C}_{\geq 0},\forall n\geq 0\}\\
        \mathrm{Gap}^\lor(\mathcal{C})_{\leq 0}:=\{(X,Y)\in\mathrm{Gap}^\lor(\mathcal{C})|X_{0,n},Y_{0,n}\in\mathcal{C}_{\leq 0},\forall n\geq 0\}.
    \end{aligned}
    \]
    \begin{itemize}
        \item $\mathrm{Gap}^\lor(\mathcal{C})_{\geq 1}\subseteq \mathrm{Gap}^\lor(\mathcal{C})_{\geq 0}$ and $\mathrm{Gap}^\lor(\mathcal{C})_{\leq 0}\subseteq \mathrm{Gap}^\lor(\mathcal{C})_{\leq 1}$ are obvious.
        \item let $(X_{\bullet,\bullet},Y_{\bullet,\bullet})\in \mathrm{Gap}^\lor(\mathcal{C})_{\geq 0}$ and $(X'_{\bullet,\bullet},Y'_{\bullet,\bullet})\in \mathrm{Gap}^\lor(\mathcal{C})_{\leq -1}$. To prove $\mathrm{Map}((X_{\bullet,\bullet},Y_{\bullet,\bullet}),(X'_{\bullet,\bullet},Y'_{\bullet,\bullet}))\simeq0$, we just need to prove $\mathrm{Map}(X_{\bullet,\bullet},X'_{\bullet,\bullet})\simeq0$. Since $\mathrm{Map}_\mathcal{C}(X_{0,n},X'_{0,n})\simeq0$ and $\mathrm{Map}_\mathcal{C}(X_{0,n}[1],X'_{0,n+1})\simeq 0$ for all $n\geq 0$, we know $\mathrm{Map}(X_{\bullet,\bullet},X'_{\bullet,\bullet})=0$.
        \item let $(X_{\bullet,\bullet},Y_{\bullet,\bullet})\in \mathrm{Gap}^\lor(\mathcal{C})$, define
        \[
        \begin{aligned}
            \tau_{\geq n}(X_{\bullet,\bullet},Y_{\bullet,\bullet})=(0\to\tau_{\geq n}X_{0,1}\to\tau_{\geq n}X_{0,2}\to\cdots,0\to\tau_{\geq n}Y_{0,1}\to\tau_{\geq n}Y_{0,2}\to\cdots)\\
            \tau_{\leq n}(X_{\bullet,\bullet},Y_{\bullet,\bullet})=(0\to\tau_{\leq n}X_{0,1}\to\tau_{\leq n}X_{0,2}\to\cdots,0\to\tau_{\leq n}Y_{0,1}\to\tau_{\leq n}Y_{0,2}\to\cdots).
        \end{aligned}
        \]
    \end{itemize}
\end{proof}
\begin{lemma}\label{lem:derived3}
    Let $\mathcal{F}:\mathcal{A}\to\mathcal{C}$ be a coconnective $t$-exact functor.
    \begin{enumerate}[label=(\arabic*)]
        \item For $X\in\mathcal{A}$, if $F(X)\in\mathcal{C}_{[a,b]}$, then $X\in\mathcal{A}_{[a,b]}$.
        \item For $X\in\mathcal{A}_{\geq 0}, Y\in\mathcal{A}_{\leq 0}$,
        \[
        \mathrm{Map}_{\mathcal{A}}(X,Y)\simeq\mathrm{Map}_{\mathcal{A}^\heartsuit}(\pi_0X,\pi_0Y)\simeq\mathrm{Map}_{\mathcal{C}^\heartsuit}(\pi_0\mathcal{F}(X),\pi_0\mathcal{F}(Y))\simeq\mathrm{Map}_\mathcal{C}(\mathcal{F}(X),\mathcal{F}(Y)).
        \]
        \item Let $X\to Y\to Z$ be a cofiber sequence in $\mathcal{C}$. Then there is a cofiber sequence
        \[
            \operatorname{cofib}(\tau_{\geq n}X\to\tau_{\geq n}Y)\to\tau_{\geq n}Z\to\pi_nZ/\operatorname{coker}(\pi_nX\to\pi_nY)[n].
        \]
    \end{enumerate}
\end{lemma}
\begin{proof}\quad
    \begin{enumerate}[label=(\arabic*)]
        \item It is because for every $n$, $\mathcal{F}(\pi_nX)\to \pi_n \mathcal{F}(X)$ is equivalence.
        \item Apply the (co)fiber sequence $\pi_0Y\to Y\to \tau_{\leq -1}Y$, we have fiber sequence
        \[
        \mathrm{Map}_{\mathcal{A}}(X,\pi_0Y)\to \mathrm{Map}_{\mathcal{A}}(X,Y)\to \mathrm{Map}_{\mathcal{A}}(X,\tau_{\leq -1}Y).
        \]
        Since $\mathrm{Map}_{\mathcal{A}}(X,\tau_{\leq -1}Y)$ is contractible, we have equivalence
        \[
        \mathrm{Map}_{\mathcal{A}}(X,\pi_0Y)\simeq \mathrm{Map}_{\mathcal{A}}(X,Y).
        \]
        Apply the (co)fiber sequence $\tau_{\geq 1}X\to X\to \pi_0X$, we have fiber sequence
        \[
        \mathrm{Map}_{\mathcal{A}}(\tau_{\geq 1}X,\pi_0Y)\to \mathrm{Map}_{\mathcal{A}}(X,\pi_0Y)\to \mathrm{Map}_{\mathcal{A}}(\pi_0X,\pi_0Y)
        \]
        Since $\mathrm{Map}_{\mathcal{A}}(\tau_{\geq 1}X,\pi_0Y)$ is contractible, we have equivalence
        \[
        \mathrm{Map}_{\mathcal{A}}(X,\pi_0Y)\simeq \mathrm{Map}_{\mathcal{A}^\heartsuit}(\pi_0X,\pi_0Y).
        \]
        Hence, $\mathrm{Map}_{\mathcal{A}}(X,Y)\simeq\mathrm{Map}_{\mathcal{A}^\heartsuit}(\pi_0X,\pi_0Y)$. Similarly,
        \[
        \mathrm{Map}_\mathcal{C}(\mathcal{F}(X),\mathcal{F}(Y))\simeq\mathrm{Map}_{\mathcal{C}^\heartsuit}(\pi_0\mathcal{F}(X),\pi_0\mathcal{F}(Y))
        \]
        Since $\mathcal{F}$ is coconnective, $\mathrm{Map}_{\mathcal{A}^\heartsuit}(\pi_0X,\pi_0Y)\simeq\mathrm{Map}_{\mathcal{C}^\heartsuit}(\pi_0\mathcal{F}(X),\pi_0\mathcal{F}(Y))$.
        \item Compare two long exact sequences
        \[
        \begin{tikzcd}
            \pi_i\tau_{\geq n}X\ar[r]\ar[d]&\pi_i\tau_{\geq n} Y\ar[r]\ar[d]&\pi_i\operatorname{cofib}(\tau_{\geq n}X\to \tau_{\geq n}Y)\ar[r]\ar[d]&\pi_{i-1}\tau_{\geq n}X\ar[r]\ar[d]&\pi_{i-1}\tau_{\geq n} Y\ar[d]\\
            \pi_iX\ar[r]&\pi_iY\ar[r]&\pi_iZ\ar[r]&\pi_{i-1}X\ar[r]&\pi_{i-1}Y
        \end{tikzcd}
        \]
        If $i>n$, by the five lemma, we know $\pi_i\operatorname{cofib}(\tau_{\geq n}X\to \tau_{\geq n}Y)\to\pi_iZ$ is equivalence.

        Since $\pi_i\tau_{\geq n}X=\pi_i\tau_{\geq n}Y=0$ when $i<n$, we know $\pi_i\operatorname{cofib}(\tau_{\geq n}X\to \tau_{\geq n}Y)=0$ for all $i<n$. Besides, $\pi_n\operatorname{cofib}(\tau_{\geq n}X\to \tau_{\geq n}Y)=\operatorname{coker}(\pi_nX\to\pi_nY)$ and $\pi_n\operatorname{cofib}(\tau_{\geq n}X\to \tau_{\geq n}Y)\to\pi_nZ$ is monomorphism.

        Hence,
        \[
        \operatorname{cofib}(\operatorname{cofib}(\tau_{\geq n}X\to \tau_{\geq n}Y)\to\tau_{\geq n}Z)\simeq\pi_nZ/\operatorname{coker}(\pi_nX\to\pi_nY)[n].
        \]
    \end{enumerate}
\end{proof}
\begin{prop}\label{prop:heart}
    Let $\mathcal{F}:\mathcal{A}\to\mathcal{C}$ be a coconnective $t$-exact functor. Then $\mathrm{Gap}^\lor(\mathcal{F})$ admits a bounded t-structure and $\mathrm{Gap}^\lor_\mathcal{F}$ is coconnective.
\end{prop}
\begin{proof}\quad
Define full subcategories
    \[
    \begin{aligned}
        \mathrm{Gap}^\lor(\mathcal{F})_{\geq 0}=\{(A_\bullet,X_{\bullet,\bullet},B_\bullet,Y_{\bullet,\bullet})\in\mathrm{Gap}^\lor(\mathcal{F})|X_{0,n},Y_{0,n}\in\mathcal{C}_{\geq 0},\forall n\geq 0\}\\
        \mathrm{Gap}^\lor(\mathcal{F})_{\leq 0}=\{(A_\bullet,X_{\bullet,\bullet},B_\bullet,Y_{\bullet,\bullet})\in\mathrm{Gap}^\lor(\mathcal{F})|X_{0,n},Y_{0,n}\in\mathcal{C}_{\leq 0},\forall n\geq 0\}
    \end{aligned}
    \]
    \begin{itemize}
        \item $\mathrm{Gap}^\lor(\mathcal{F})_{\geq 1}\subseteq \mathrm{Gap}^\lor(\mathcal{F})_{\geq 0}$ and $\mathrm{Gap}^\lor(\mathcal{F})_{\leq  0}\subseteq \mathrm{Gap}^\lor(\mathcal{F})_{\leq 1}$ are obvious.
        \item Let $(A_\bullet,X_{\bullet,\bullet},B_\bullet,Y_{\bullet,\bullet})\in\mathrm{Gap}^\lor(\mathcal{F})_{\geq 0}$ and $(A'_\bullet,X'_{\bullet,\bullet},B'_\bullet,Y'_{\bullet,\bullet})\in\mathrm{Gap}^\lor(\mathcal{F})_{\leq -1}$. To prove
        \[
        \mathrm{Map}((A_\bullet,X_{\bullet,\bullet},B_\bullet,Y_{\bullet,\bullet}),(A'_\bullet,X'_{\bullet,\bullet},B'_\bullet,Y'_{\bullet,\bullet}))=0,
        \]
        we just need to prove $\mathrm{Map}((A_\bullet,X_{\bullet,\bullet}),(A'_\bullet,X'_{\bullet,\bullet}))=0$. It is
        \[
        \mathrm{Map}((A_\bullet,X_{\bullet,\bullet}),(A'_\bullet,X'_{\bullet,\bullet}))\simeq\mathrm{Map}(X_{\bullet,\bullet},X'_{\bullet,\bullet})\prod_{\operatorname{colim}_n\Pi_{i=1}^n\mathrm{Map}_{\mathcal{C}}(\mathcal{F}(A_i),\mathcal{F}(A'_i))}\operatorname{colim}_n\Pi_{i=1}^n\mathrm{Map}_{\mathcal{A}}(A_i,A'_i)
        \]
        Since $\mathrm{Map}(X_{\bullet,\bullet},X'_{\bullet,\bullet})\simeq 0$, we only need to prove
        \[
        \mathrm{Map}_{\mathcal{A}}(A_n,A'_n)\to\mathrm{Map}_{\mathcal{C}}(\mathcal{F}(A_n),\mathcal{F}(A'_n))
        \]
        is equivalence for each $n\geq 1$.

        Since $\mathcal{F}(A_n)=\operatorname{cofib}(X_{0,n-1}\to X_{0,n})$ and $\mathcal{F}(A'_n)=\operatorname{cofib}(X'_{0,n-1}\to X'_{0,n})$, we know $\mathcal{F}(A_n)\in\mathcal{C}_{\geq 0},\mathcal{F}(A'_n)\in\mathcal{C}_{\leq 0}$. By Lemma \ref{lem:derived3}, we know $A_n\in\mathcal{A}_{\geq 0},A'_n\in\mathcal{A}_{\leq 0}$ and
        \[
        \mathrm{Map}_{\mathcal{A}}(A_n,A'_n)\simeq\mathrm{Map}_{\mathcal{A}^\heartsuit}(\pi_0A_n,\pi_0A'_n)\to\mathrm{Map}_{\mathcal{C}}(\mathcal{F}(A_n),\mathcal{F}(A'_n))\simeq\mathrm{Map}_{\mathcal{C}^\heartsuit}(\pi_0\mathcal{F}(A_n),\pi_0\mathcal{F}(A'_n))
        \]
        is equivalence.
        \item Let $(A_\bullet,X_{\bullet,\bullet},B_\bullet,Y_{\bullet,\bullet})\in\mathrm{Gap}^\lor(\mathcal{F})$. By Lemma \ref{lem:derived3}, there is a cofiber sequence
        \[
        \operatorname{cofib}(\tau_{\geq 0}X_{0,i}\to\tau_{\geq 0}X_{0,i+1})\to\tau_{\geq 0}\mathcal{F}(A_{i+1})\to\pi_0\mathcal{F}(A_{i+1})/\operatorname{coker}(\pi_0X_{0,i}\to\pi_0X_{0,i+1})
        \]
        for each $i\geq 0$. By Lemma \ref{lem:derived1}, the image of $\mathcal{A}^\heartsuit$ in $\mathcal{C}^\heartsuit$ is closed under subquotients. Hence, $\pi_0\mathcal{F}(A_{i+1})/\operatorname{coker}(\pi_0X_{0,i}\to\pi_0X_{0,i+1})$ has preimage. Hence, by Lemma \ref{lem:derived3},
        \[
        \begin{aligned}
            &\mathrm{Map}_\mathcal{C}(\tau_{\geq 0}\mathcal{F}(A_{i+1}),\pi_0\mathcal{F}(A_{i+1})/\operatorname{coker}(\pi_0X_{0,i}\to\pi_0X_{0,i+1}))\\
            \simeq&\mathrm{Map}_{\mathcal{A}}(\tau_{\geq 0}A_{i+1},\mathcal{F}^{-1}(\pi_0\mathcal{F}(A_{i+1})/\operatorname{coker}(\pi_0X_{0,i}\to\pi_0X_{0,i+1})))
        \end{aligned}
        \]
        Then, there is a morphism $\tau_{\geq 0}A_{i+1}\to \mathcal{F}^{-1}(\pi_0\mathcal{F}(A_{i+1})/\operatorname{coker}(\pi_0X_{0,i}\to\pi_0X_{0,i+1}))$ whose image under $\mathcal{F}$ is $\tau_{\geq 0}\mathcal{F}(A_{i+1})\to\pi_0\mathcal{F}(A_{i+1})/\operatorname{coker}(\pi_0X_{0,i}\to\pi_0X_{0,i+1})$. Thus,
        \[
        \mathcal{F}(\operatorname{fib}(\tau_{\geq 0}A_{i+1}\to \mathcal{F}^{-1}(\pi_0\mathcal{F}(A_{i+1})/\operatorname{coker}(\pi_0X_{0,i}\to\pi_0X_{0,i+1})))\simeq \operatorname{cofib}(\tau_{\geq 0}X_{0,i}\to\tau_{\geq 0}X_{0,i+1}).
        \]
        Similarly, there is a morphism $\tau_{\geq 0}B_{i+1}\to \mathcal{F}^{-1}(\pi_0\mathcal{F}(B_{i+1})/\operatorname{coker}(\pi_0Y_{0,i}\to\pi_0Y_{0,i+1}))$ such that
        \[
        \mathcal{F}(\operatorname{fib}(\tau_{\geq 0}B_{i+1}\to \mathcal{F}^{-1}(\pi_0\mathcal{F}(B_{i+1})/\operatorname{coker}(\pi_0Y_{0,i}\to\pi_0Y_{0,i+1})))\simeq \operatorname{cofib}(\tau_{\geq 0}Y_{0,i}\to\tau_{\geq 0}Y_{0,i+1})
        \]
        Define
        \[
        \tau_{\geq 0}(A_\bullet,X_{\bullet,\bullet},B_\bullet,Y_{\bullet,\bullet}):=\begin{cases}
            \operatorname{fib}(\tau_{\geq 0}A_{i+1}\to \mathcal{F}^{-1}(\pi_0\mathcal{F}(A_{i+1})/\operatorname{coker}(\pi_0X_{0,i}\to\pi_0X_{0,i+1}))\\
            0\to\tau_{\geq 0}X_{0,1}\to\tau_{\geq 0}X_{0,2}\to\cdots\\
            \operatorname{fib}(\tau_{\geq 0}B_{i+1}\to \mathcal{F}^{-1}(\pi_0\mathcal{F}(B_{i+1})/\operatorname{coker}(\pi_0Y_{0,i}\to\pi_0Y_{0,i+1}))\\
            0\to\tau_{\geq 0}Y_{0,1}\to\tau_{\geq 0}Y_{0,2}\to\cdots
        \end{cases}
        \]
        and $\tau_{\leq -1}(A_\bullet,X_{\bullet,\bullet},B_\bullet,Y_{\bullet,\bullet}):=\operatorname{cofib}(\tau_{\geq 0}(A_\bullet,X_{\bullet,\bullet},B_\bullet,Y_{\bullet,\bullet})\to (A_\bullet,X_{\bullet,\bullet},B_\bullet,Y_{\bullet,\bullet}))$.
    \end{itemize}

    Since $\mathrm{Gap}^\lor_{\mathcal{F}}$ always satisfies the weak d\'evissage condition by Lemma \ref{lem:fillable}, we only need to prove $\mathrm{Gap}^\lor(\mathcal{A})^\heartsuit\to\mathrm{Gap}^\lor(\mathcal{F})^\heartsuit$ is fully faithful. For $(X_{\bullet,\bullet},Y_{\bullet,\bullet}),(X'_{\bullet,\bullet},Y'_{\bullet,\bullet})\in \mathrm{Gap}^\lor(\mathcal{A})^\heartsuit$, since $\mathcal{A}^\heartsuit\to\mathcal{C}^\heartsuit$ is fully faithful,
    \[
    \begin{aligned}
        &\mathrm{Map}(\mathrm{Gap}^\lor_{\mathcal{F}}(X_{\bullet,\bullet},Y_{\bullet,\bullet}),\mathrm{Gap}^\lor_{\mathcal{F}}(X'_{\bullet,\bullet},Y'_{\bullet,\bullet}))\\
        \simeq&\mathrm{Map}(\mathrm{Gap}_\mathcal{F}(X_{\bullet,\bullet}),\mathrm{Gap}_\mathcal{F}(X'_{\bullet,\bullet}))\prod_{\mathrm{Map}_\mathcal{C}(\mathrm{ev}_\mathcal{F}(\mathrm{Gap}_\mathcal{F}(X_{\bullet,\bullet}))),\mathrm{ev}_\mathcal{F}(\mathrm{Gap}_\mathcal{F}(Y_{\bullet,\bullet})))} \mathrm{Map}(\mathrm{Gap}_\mathcal{F}(Y_{\bullet,\bullet}),\mathrm{Gap}_\mathcal{F}(Y'_{\bullet,\bullet}))\\
        \simeq&\mathrm{Map}(\mathrm{Gap}_\mathcal{F}(X_{\bullet,\bullet}),\mathrm{Gap}_\mathcal{F}(X'_{\bullet,\bullet}))\prod_{\mathrm{Map}_\mathcal{C}(\mathrm{ev}_\mathcal{A}(X_{\bullet,\bullet})),\mathrm{ev}_\mathcal{A}(Y_{\bullet,\bullet}))} \mathrm{Map}(\mathrm{Gap}_\mathcal{F}(Y_{\bullet,\bullet}),\mathrm{Gap}_\mathcal{F}(Y'_{\bullet,\bullet}))
    \end{aligned}
    \]
    Hence, we only need to prove $\mathrm{Map}(X_{\bullet,\bullet},X'_{\bullet,\bullet})\simeq\mathrm{Map}(\mathrm{Gap}_\mathcal{F}(X_{\bullet,\bullet}),\mathrm{Gap}_\mathcal{F}(X'_{\bullet,\bullet}))$. By Lemma \ref{lem:derived3},
    \[
    \begin{aligned}
        &\mathrm{Map}(\mathrm{Gap}_\mathcal{F}(X_{\bullet,\bullet}),\mathrm{Gap}_\mathcal{F}(X'_{\bullet,\bullet}))\\
        \simeq&\mathrm{Map}(\mathcal{F}(X_{\bullet,\bullet}),\mathcal{F}(X'_{\bullet,\bullet}))\prod_{\operatorname{colim}_n\Pi_{i=1}^n\mathrm{Map}_\mathcal{C}(\mathcal{F}(X_{i-1,i}),\mathcal{F}(X'_{i-1,i}))}\operatorname{colim}_n\Pi_{i=1}^n\mathrm{Map}_\mathcal{A}(X_{i-1,i},X'_{i-1,i})\\
        \simeq&\mathrm{Map}(X_{\bullet,\bullet},X'_{\bullet,\bullet}).
    \end{aligned}
    \]
\end{proof}
\begin{definition}
    Denote by $\mathfrak{S}_\mathrm{coconnective}$ the subclass of all coconnective $t$-exact functors $\mathcal{F}:\mathcal{A}\to\mathcal{C}$ between small $t$-categories.
\end{definition}
Proposition \ref{prop:heart} and Lemma \ref{lem:derived1} tells us:
\begin{cor}\label{cor:heart}
    $\mathfrak{S}_\mathrm{coconnective}(\mathcal{E})$ is a fillable class and $\mathfrak{S}_\mathrm{coconnective}(\mathcal{E})\subset\mathfrak{S}_\mathrm{fillability}(\mathcal{E})$. As a consequence, given a localizing $L:\mathrm{Cat}^\mathrm{perf}(\mathcal{E})\to\mathrm{Sp}$ and an integral $m\in\mathbb{Z}$, the following statements are equivalent:
    \begin{enumerate}[label=(\arabic*)]
        \item $L_m$ is $\mathfrak{S}_\mathrm{coconnective}(\mathcal{E})$-epimorphic, and
        \item $L_{m+n}$ is $\mathfrak{S}_\mathrm{coconnective}(\mathcal{E})$-invariant for all $n\geq 0$.
    \end{enumerate}
\end{cor}
\subsection{Some remarks on Efimov's theorem}
\begin{definition}\cite[Definition 4.2]{efimov2026}
    An $t$-exact functor $F:\mathcal{A}\to\mathcal{C}$ between small $t$-categories is called \emph{$(-n)$-coconnective} if
        \begin{itemize}
            \item $F(\mathcal{A})$ generates $\mathcal{C}$ as a stable idempotent-complete $\infty$-categories(it satisfies the weak d\'evissage condition), and
            \item for $x,y\in\mathcal{A}^\heartsuit$, the map $\mathrm{Ext}^i_\mathcal{A}(x,y)\to\mathrm{Ext}^i_\mathcal{C}(\mathcal{F}(x),\mathcal{F}(y))$ is an isomorphism for $i\leq n-1$, and a monomorphism for $i=n$.
        \end{itemize}
\end{definition}
\cite{efimov2026} proves a stronger version of the theorem of heart:
\begin{thm}\cite[Theorem 0.3]{efimov2026}
    Let $F:\mathcal{A}\to\mathcal{C}$ be a $(-n)$-coconnective functor between small $t$-categories. Then the induces map $K_j(\mathcal{A})\to K_j(\mathcal{C})$ is an isomorphism for $j\geq -n$, and a monomorphism for $j=-n-1$.
\end{thm}
\begin{lemma}\label{lem:extgroup}
    Let $F:\mathcal{A}\to\mathcal{C}$ be a $(-n)$-coconnective functor between small $t$-categories. Then for $X,Y\in\mathrm{Gap}^\lor(\mathcal{A})^\heartsuit$,
    \[
    \mathrm{Ext}^i_{\mathrm{Gap}^\lor(\mathcal{A})}(X,Y)\to \mathrm{Ext}^i_{\mathrm{Gap}^\lor(\mathcal{C})}(X,Y)
    \]
    is an isomorphism for $i\leq n-1$  and a monomorphism for $i=n$
\end{lemma}
\begin{proof}
    Since $\mathrm{Gap}^\lor(\mathcal{A})^\heartsuit\simeq\operatorname{colim}_n\mathrm{Fun}([n]\sqcup_{[0]}[n],\mathcal{A}^\heartsuit)$, we can assume $X,Y\in \mathrm{Fun}([n]\sqcup_{[0]}[n],\mathcal{A}^\heartsuit)$ for some $n$. Denote $K:=[n]\sqcup_{[0]}[n]$. Then the mapping spectra are
    \[
    \begin{aligned}
        \mathrm{Map}^\mathrm{Sp}_{\mathrm{Gap}^\lor(\mathcal{A})}(X,Y)\simeq\operatorname{lim}_{(i,j)\in\mathrm{tw}(K)}\mathrm{Map}^\mathrm{Sp}_\mathcal{A}(X(i),Y(j))\\
        \mathrm{Map}^\mathrm{Sp}_{\mathrm{Gap}^\lor(\mathcal{C})}(X,Y)\simeq\operatorname{lim}_{(i,j)\in\mathrm{tw}(K)}\mathrm{Map}^\mathrm{Sp}_\mathcal{C}(X(i),Y(j)).
    \end{aligned}
    \]
    By assumption, we know
    \[
    \begin{cases}
        \pi_k\mathrm{Map}^\mathrm{Sp}_\mathcal{A}(X(i),Y(j))\simeq\pi_k\mathrm{Map}^\mathrm{Sp}_\mathcal{C}(X(i),Y(j))=0,\quad k>0\\
        \pi_k\mathrm{Map}^\mathrm{Sp}_\mathcal{A}(X(i),Y(j))\simeq\pi_k\mathrm{Map}^\mathrm{Sp}_\mathcal{C}(X(i),Y(j)),\quad -n+1\leq k\leq 0\\
        \pi_{-n}\mathrm{Map}^\mathrm{Sp}_\mathcal{A}(X(i),Y(j))\to\pi_{-n}\mathrm{Map}^\mathrm{Sp}_\mathcal{C}(X(i),Y(j)) \text{ is a monomorphism}
    \end{cases}
    \]
    for each $i,j$.

    Hence, $\pi_{-i}\mathrm{Map}^\mathrm{Sp}_\mathcal{A}(X(i),Y(j))\to\pi_{-i}\mathrm{Map}^\mathrm{Sp}_\mathcal{C}(X(i),Y(j))$ is an isomorphism for $i\leq n-1$  and a monomorphism for $i=n$.
\end{proof}
\begin{prop}\label{prop:extgroup}
    Let $F:\mathcal{A}\to\mathcal{C}$ be a $(-n)$-coconnective functor between small $t$-categories. Then for $X,Y\in\mathrm{Gap}^\lor(\mathcal{A})^\heartsuit$,
    \[
    \mathrm{Ext}^i_{\mathrm{Gap}^\lor(\mathcal{A})}(X,Y)\to \mathrm{Ext}^i_{\mathrm{Gap}^\lor(\mathcal{F})}(X,Y)
    \]
    is an isomorphism for $i\leq n-1$  and a monomorphism for $i=n$.
\end{prop}
\begin{proof}
    The functor $\mathrm{gr}:\mathrm{Gap}(\mathcal{C})\to\operatorname{colim}_n\mathcal{C}^n$ induces funtor $\mathrm{gr}^\lor:\mathrm{Gap}^\lor(\mathcal{C})\to(\operatorname{colim}_n\mathcal{C}^n)^2$. Then
    \[
    \begin{tikzcd}
        \mathrm{Gap}^\lor(\mathcal{F})\ar[r]\ar[d]&\mathrm{Gap}^\lor(\mathcal{C})\ar[d]\\
        (\operatorname{colim}_n\mathcal{A}^n)^2\ar[r]&(\operatorname{colim}_n\mathcal{C}^n)^2
    \end{tikzcd}
    \]
    is pullback of $\infty$-categories. Now consider the long exacct sequence
    \[
    \begin{aligned}
        \cdots&\to\pi_{k+1}\mathrm{Map}^\mathrm{Sp}_{\mathrm{Gap}^\lor(\mathcal{C})}(X,Y)\oplus\pi_{k+1}\mathrm{Map}^\mathrm{Sp}_{(\operatorname{colim}_n\mathcal{A}^n)^2}(\mathrm{gr}^\lor(X),\mathrm{gr}^\lor(Y))\\
        &\to \pi_{k+1}\mathrm{Map}^\mathrm{Sp}_{(\operatorname{colim}_n\mathcal{C}^n)^2}(\mathrm{gr}^\lor(X),\mathrm{gr}^\lor(Y))\to\pi_k\mathrm{Map}^\mathrm{Sp}_{\mathrm{Gap}^\lor(\mathcal{F})}(X,Y)\\
        &\to \pi_k\mathrm{Map}^\mathrm{Sp}_{\mathrm{Gap}^\lor(\mathcal{C})}(X,Y)\oplus\pi_k\mathrm{Map}^\mathrm{Sp}_{(\operatorname{colim}_n\mathcal{A}^n)^2}(\mathrm{gr}^\lor(X),\mathrm{gr}^\lor(Y))\\
        &\to \pi_k\mathrm{Map}^\mathrm{Sp}_{(\operatorname{colim}_n\mathcal{C}^n)^2}(\mathrm{gr}^\lor(X),\mathrm{gr}^\lor(Y))\to\cdots
    \end{aligned}
    \]
    Since $\pi_k\mathrm{Map}^\mathrm{Sp}_{(\operatorname{colim}_n\mathcal{A}^n)^2}(\mathrm{gr}^\lor(X),\mathrm{gr}^\lor(Y))\to \pi_k\mathrm{Map}^\mathrm{Sp}_{(\operatorname{colim}_n\mathcal{C}^n)^2}(\mathrm{gr}^\lor(X),\mathrm{gr}^\lor(Y))$ is isomorphism for $k\geq -n+1$ and is monomorphism fo $k=-n$. Hence,
    \[
    \pi_k\mathrm{Map}^\mathrm{Sp}_{\mathrm{Gap}^\lor(\mathcal{F})}(X,Y)\to \pi_k\mathrm{Map}^\mathrm{Sp}_{\mathrm{Gap}^\lor(\mathcal{C})}(X,Y)
    \]
     is isomorphism for $k\geq -n+1$ and is monomorphism fo $k=-n$.

     Now consider the commutative diagram
     \[
     \begin{tikzcd}
         \pi_{-i}\mathrm{Map}^\mathrm{Sp}_{\mathrm{Gap}^\lor(\mathcal{A})}(X,Y)\ar[dr,"h_i"]\ar[d,"g_i"]&\\
         \pi_{-i}\mathrm{Map}^\mathrm{Sp}_{\mathrm{Gap}^\lor(\mathcal{F})}(X,Y)\ar[r,"f_i"]&\pi_{-i}\mathrm{Map}^\mathrm{Sp}_{\mathrm{Gap}^\lor(\mathcal{C})}(X,Y)
     \end{tikzcd}
     \]
     Since $h_i,f_i$ are isomorphisms for $i\leq n-1$ and are monomorphisms $i=n$, so is $g_i$.
\end{proof}
\begin{definition}
    Denote by $\mathfrak{S}^{-n}_\mathrm{coconnective}$ the subclass of all $(-n)$-coconnective $t$-exact functors $\mathcal{F}:\mathcal{A}\to\mathcal{C}$ between small $t$-categories.
\end{definition}
\begin{cor}
    $\mathfrak{S}^{-n}_\mathrm{coconnective}(\mathcal{E})$ is a fillable class and $\mathfrak{S}^{-n}_\mathrm{coconnective}(\mathcal{E})\subset\mathfrak{S}_\mathrm{fillability}(\mathcal{E})$. As a consequence, given a localizing $L:\mathrm{Cat}^\mathrm{perf}(\mathcal{E})\to\mathrm{Sp}$ and an integral $m\in\mathbb{Z}$, the following statements are equivalent:
    \begin{enumerate}[label=(\arabic*)]
        \item $L_m$ is $\mathfrak{S}_\mathrm{coconnective}(\mathcal{E})$-epimorphic, and
        \item $L_{m+n}$ is $\mathfrak{S}_\mathrm{coconnective}(\mathcal{E})$-invariant for all $n\geq 0$.
    \end{enumerate}
\end{cor}
\section*{Acknowledgments}

I would like to express my sincere gratitude to everyone who supported me throughout the process of completing this article. First and foremost, I am deeply thankful to my supervisor, Professor Christian Haesemeyer, for his invaluable guidance, encouragement, and insightful feedback. His expertise and patience were instrumental in shaping this work. My appreciation extends to my colleagues and friends in the School of Mathematics and Statistics, the University of Melbourne, who provided a supportive and inspiring environment.

\bibliography{refbase}

\end{document}